\newtheorem{theorem}{Theorem}
\newtheorem*{definition*}{Definition}
\newtheorem*{theorem*}{Theorem}
\newtheorem{observation}{Observation}
\newtheorem{challenge}{Challenge}
\newtheorem{proposition}{Proposition}
\newtheorem{conjecture}{Conjecture}
\newtheorem{corollary}{Corollary}
\theoremstyle{remark}
\newtheorem{remark}{Remark}
\theoremstyle{definition}
\newcommand{\E}{\mathcal{E}}
\newcommand{\T}{\mathcal{T}}
\newcommand{\C}{\mathcal{C}}
\renewcommand{\P}{\mathcal{P}}
\newcommand{\D}{\mathcal{D}}
\renewcommand{\T}{\mathcal{T}}
\renewcommand{\L}{\mathcal{L}}
\renewcommand{\S}{\mathcal{S}}
\title{Poncelet Parabola Pirouettes}
\author[D. Reznik]{Dan Reznik}
\thanks{D. Reznik$^*$, Data Science Consulting, Rio de Janeiro, Brazil. \texttt{dreznik@gmail.com}}
\author[R. Garcia]{Ronaldo Garcia}
\thanks{R. Garcia$^*$, Fed. Univ. of Goiás, Goiânia, Goiás, Brazil. \texttt{ragarcia@ufg.br}}
\begin{document}

\maketitle

\begin{abstract}

We describe some three-dozen curious phenomena manifested by parabolas inscribed or circumscribed about certain Poncelet triangle families. Despite their pirouetting motion, parabolas' focus, vertex, directrix, etc., will often sweep or envelop rather elementary loci such as lines, circles, or points. Most phenomena are unproven though supported by solid numerical evidence (proofs are welcome). Some yet unrealized experiments are posed as ``challenges'' (results are welcome!).

\vskip .3cm
\noindent\textbf{Keywords} locus, Poncelet, ellipse, inscribed, circumscribed, parabola, perspector, focus, vertex.
\vskip .3cm
\noindent \textbf{MSC} {51M04
\and 51N20 \and 51N35\and 68T20}
\end{abstract}

\section{Introduction}
%\textcolor{blue}{Veja \cref{app:explicit}}

We visit three-dozen surprising Euclidean phenomena manifested by parabolas dynamically inscribed or circumscribed about Poncelet families of triangles. As shown in  \cref{fig:poncelet-tris}, these are triangles simultaneously inscribed and circumscribed about two conics \cite{dragovic11}. Examples of works exploring loci and invariants of Poncelet triangle families include \cite{odehnal2011-poristic,pamfilos2020,reznik2020-intelligencer,skutin2013-isogonal,zaslavsky2003-trajectories}. 
%R  ( acatei observacao)
The references   used in this paper with respect to classic concepts and
facts, are not linked to the original sources; only specific contributions are listed
as articles and directly linked to their sources.
\begin{figure}
    \centering
    \includegraphics[width=\textwidth]{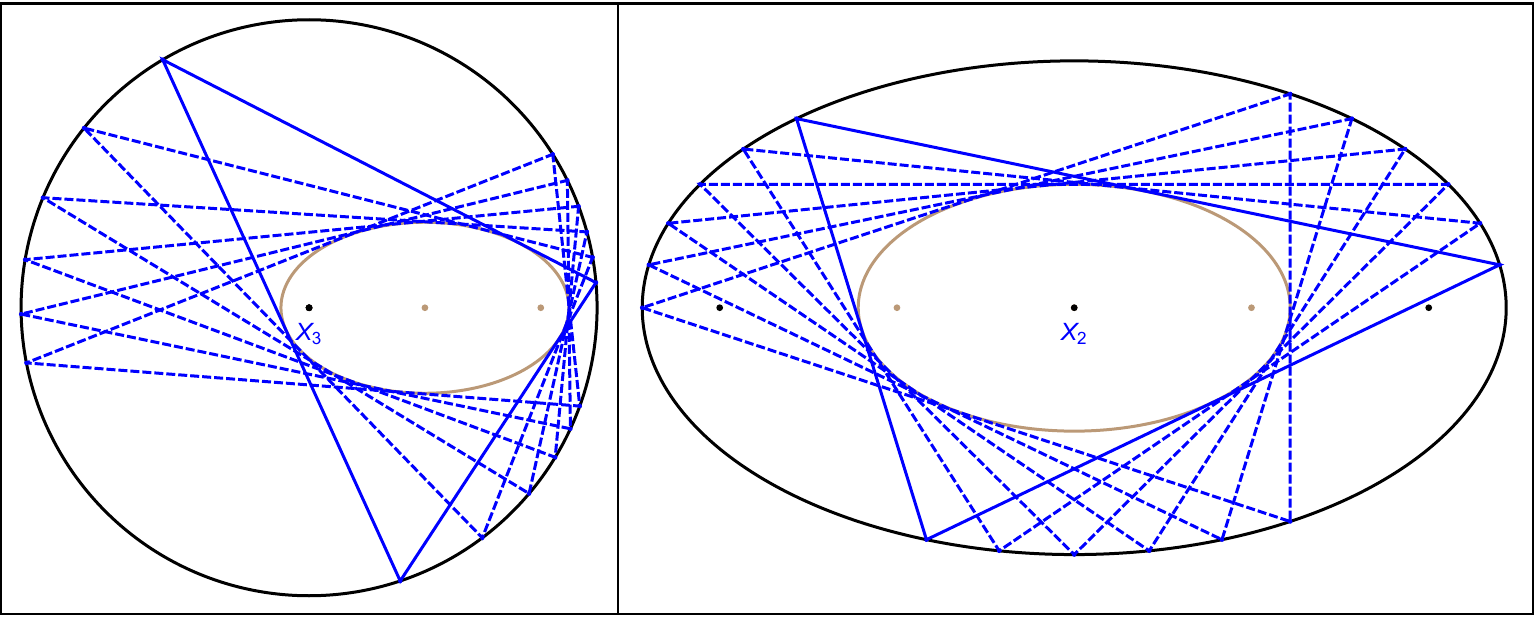}
    \caption{\textbf{Left:} Poncelet triangles inscribed in a circle (fixed circumcircle) and circumscribing a conic, i.e., the ``caustic''. In the case shown, one of the caustic's focus is the circumcenter $X_3$. \textbf{Right:} Poncelet triangles interscribed between two concentric, homothetic ellipses, where the outer (resp. inner) is the Steiner circumellipse (resp. inellipse), both of which are centered on the barycenter $X_2$.}
    \label{fig:poncelet-tris}
\end{figure}

Referring to \cref{fig:circumparabolas-basic}, every triangle is associated with a 1d family of {\em circumparabolas} which pass through the three vertices. These can be swept as (i) the image under isogonal conjugation of lines tangent to the circumcircle, or (ii) as the image under isotomic conjugation of lines tangent to the Steiner ellipse\footnote{This is  the unique circumellipse centered on the barycenter $X_2$ \cite[Circumconic]{mw}.}. For details on both isotomic and isogonal conjugation, see \cite{akopyan2007-conics}, \cite{garcia2021-impa} and \cref{app:conjug}.
%R
%and for explicit derivations of circum- and inparabola geometry see \cref{app:explicit}.
% and \cref{app:inparabola}.

\begin{figure}
    \centering
    \includegraphics[width=\textwidth]{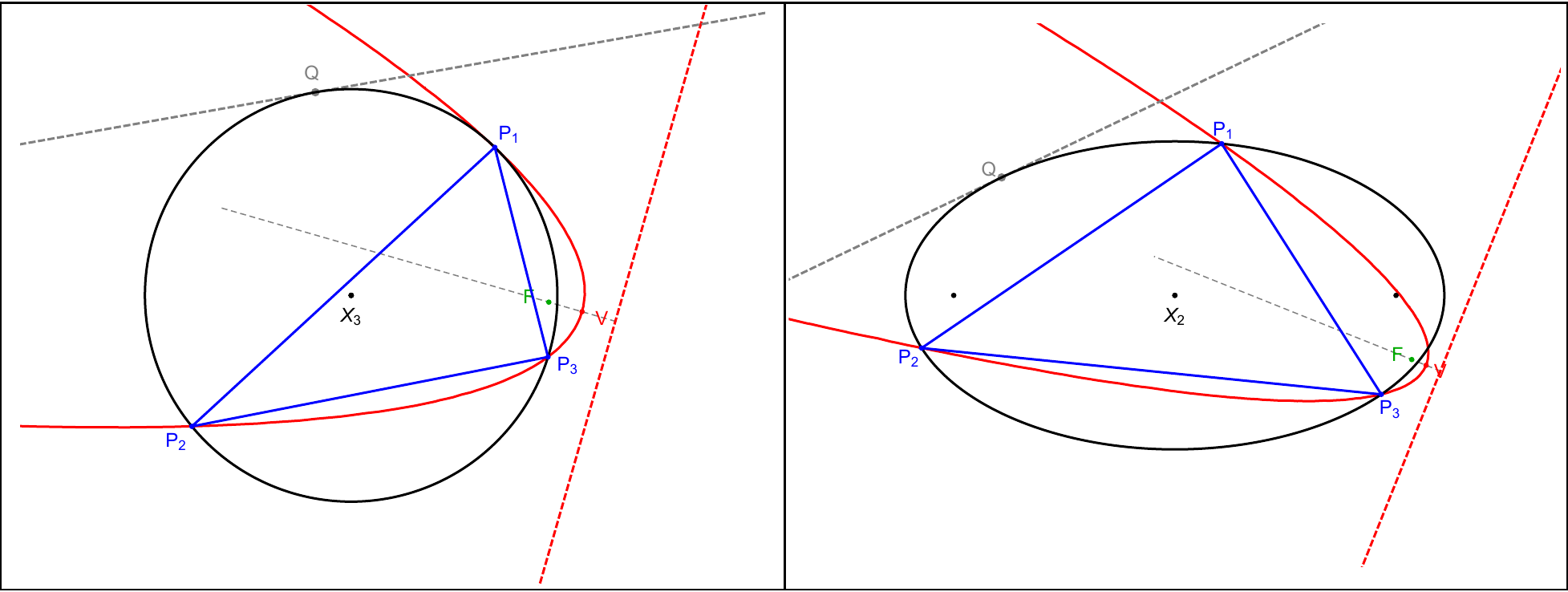}
    \caption{\textbf{Left:} A triangle's circumparabola (red) passes thru the 3 vertices and is the isogonal image of a line tangent to the circumcircle at a point $Q$ \cite[circumconic]{mw}. Also shown is the vertex $V$ and the directrix (dashed red). \textbf{Right:} Alternatively, a circumparabola is also the isotomic image of a line tangent to the Steiner ellipse at a point $Q$ \cite[isotomic conjugate]{mw}.}
    \label{fig:circumparabolas-basic}
\end{figure}

Similarly, every triangle is associated with a 1d family of inscribed parabolas or  {\em inparabolas}, tangent to each of the sidelines, see \cref{fig:inparabolas-basic-left} and \cref{fig:inparabolas-basic-right}.
   
The focus $F$ (resp. Brianchon\footnote{This is the perspector of a triangle and an inconic \cite{mw}.} point $\Pi$) always lies on the circumcircle (resp. Steiner ellipse) \cite{mw}. So to generate all inparabolas one can either (i) sweep $F$ over the circumcircle, or (ii) sweep $\Pi$ over the Steiner circumellipse.

\begin{figure}
    \centering
    \includegraphics[width=0.6\textwidth]{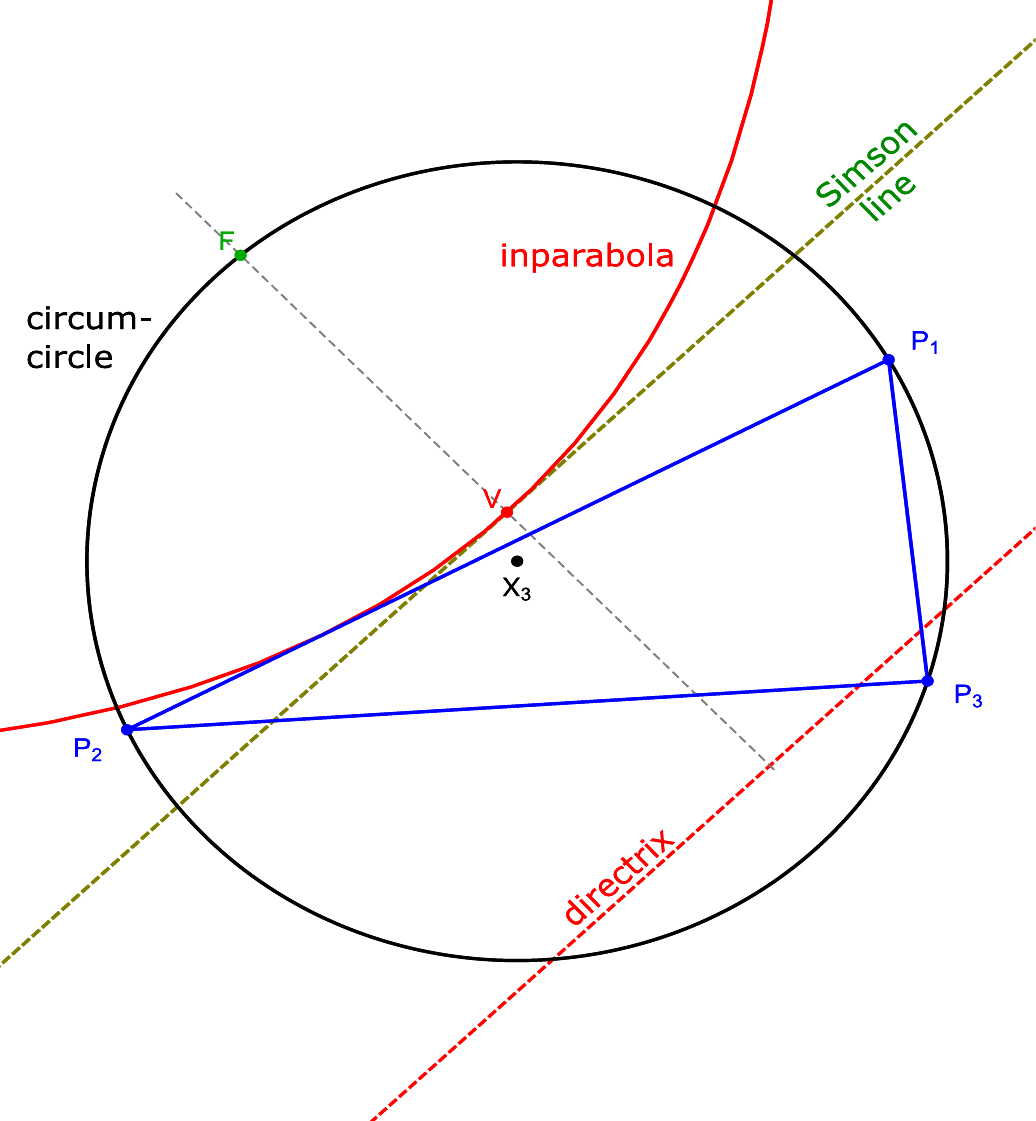}
    \caption{An inparabola (red) is tangent to a reference triangle's sides (blue). Its focus $F$ lies on the circumcircle \cite[inconic]{mw}. Also shown is the vertex $V$ and the directrix (dashed red). The latter is parallel to the $F$-Simson line $\S$ which passes through $V$ \cite{akopyan2007-conics}. }
    \label{fig:inparabolas-basic-left}
\end{figure}

\begin{figure}
    \centering
    \includegraphics[width=0.8\textwidth]{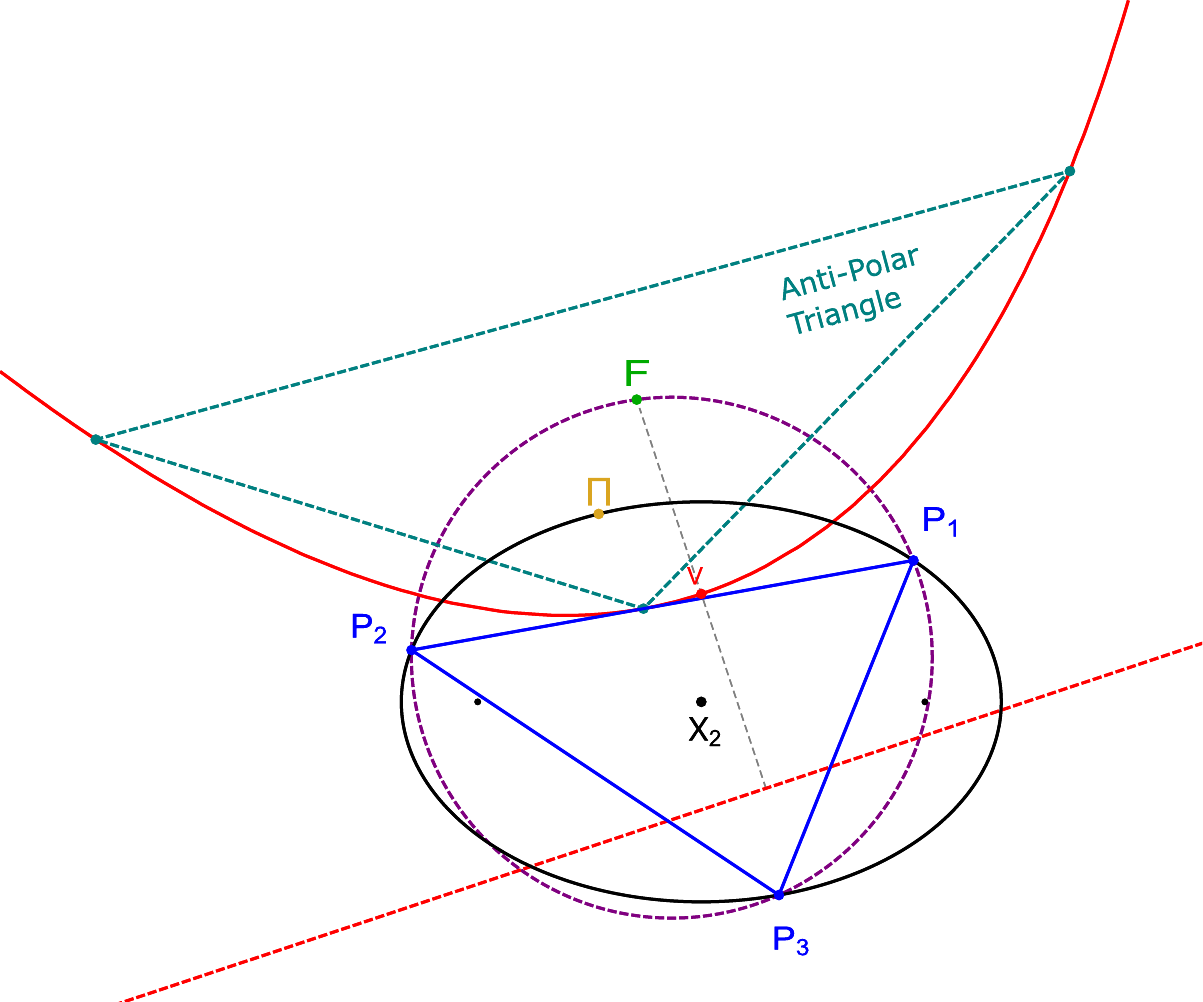}
    \caption{  The anti-polar triangle $T'$ (dashed teal) of a reference triangle $T$ (blue with respect to an inparabola $\P$ (red) has vertices at the touchpoints of $\P$ on the sidelines of $T$ (i.e., $T$ is the polar of $T'$ with respect to $\P$). Since $\P$ is an inparabola of $T$, its focus $F$ lies on the circumcircle. 
    In \cite[TC7(2)]{stothers-perspectors} it was proved   that $T'$ and $T$ are in perspective at a point $\Pi$ which lies on the Steiner ellipse (black).
    }
    \label{fig:inparabolas-basic-right}
\end{figure}

\subsection*{Experimental Thrust and a Preview of Results}

Fueled by much curiosity and using tools of graphical simulation (and numerical verification second), we look for salient phenomena manifested by in- or circumparabolas to certain ``hand-picked'' Poncelet families, namely, where the outer conic is either a circle or the Steiner ellipse itself, see \cref{fig:poncelet-tris,fig:app-poncelet-circle-inscribed}.

%\begin{figure}
  %  \centering
   % \includegraphics[width=\textwidth]{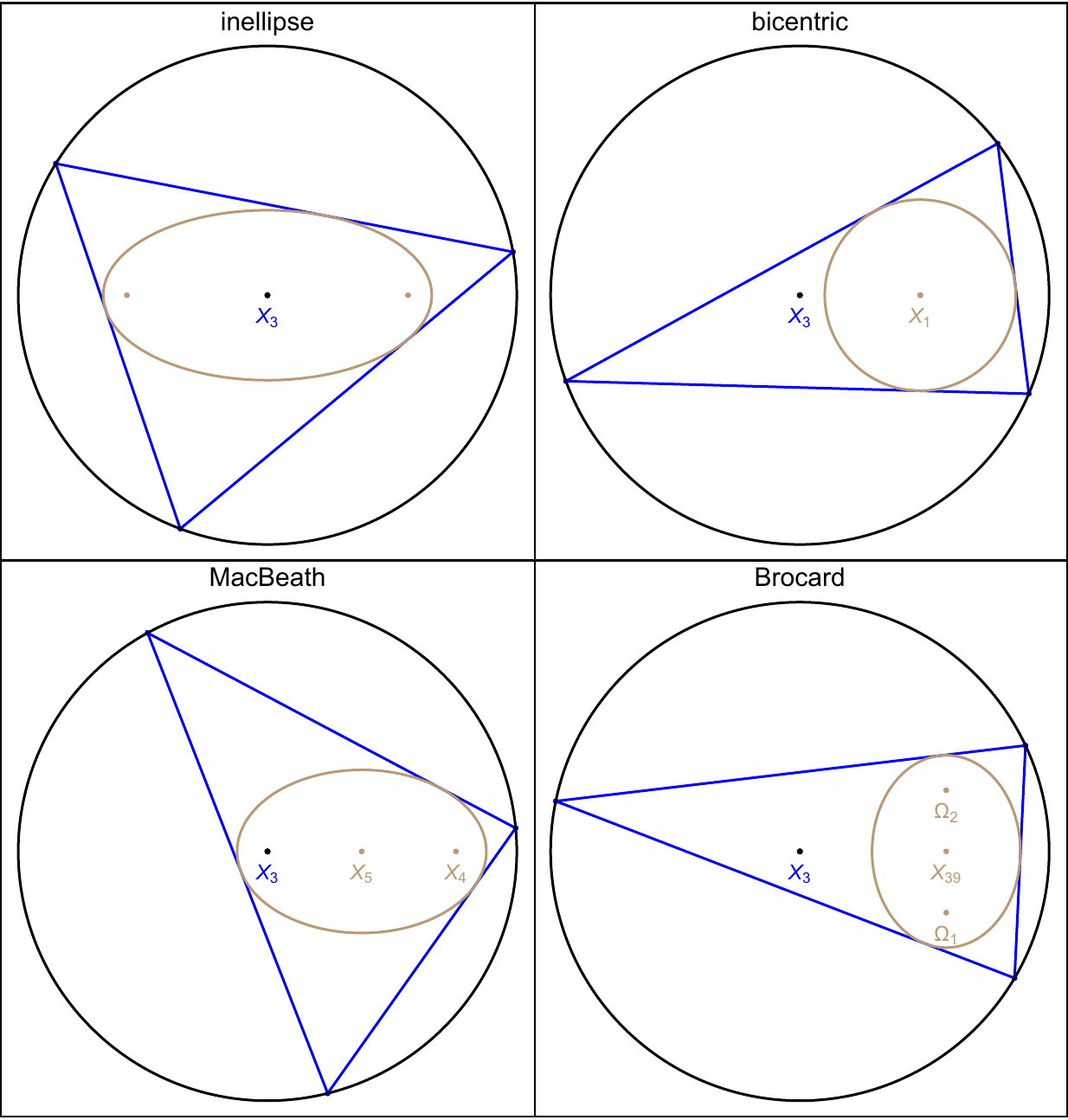}
  %  \caption{The four circle-inscribed Poncelet triangle families considered herein: (i) ``inellipse'' (caustic is a concentric ellipse), (ii) bicentric, i.e., Chapple's porism, i.e., triangles interscribed between two circles \cite[Sec.1.1]{centina15}; (iii) the ``MacBeath'' family's caustic has one focus on the circumcenter and another one on the orthocenter $X_4$. Its center is that of the 9-point circle $X_5$  \cite[MacBeath inconic]{mw}; (iv) the Brocard porism: the foci of the inconic are the two stationary Brocard points of the family \cite{bradley2007-brocard}.}
 %   \label{fig:poncelet-circle-inscribed}
%\end{figure}

Specifically, for circle- (resp. Steiner-) inscribed Poncelet, we fix the focus (resp. Brianchon point) on the outer conic. As we traverse Poncelet triangles in a given family, we observe that parabolas' traditional accessories such as the vertex, perspector, directrix, polar triangle, will often sweep (or envelop) simple curves such as conics, circles, lines, and/or points. This is similar in spirit to \cite{skutin2013-isogonal}.

In turn, this has driven us to document these results, which are in their majority presented below as (unproven) observations.
%R
The results are based on numerical and graphical experiments with help of computer systems.
%R
When certain patterns emerge over several families, we generalize them:  \cref{conj:ip-circum-vtx}, \cref{conj:ip-W},
\cref{conj:ip-locus-W}, \cref{conj:cp-isog-directrix}. 

In \cite{odehnal2022-parabolas}, an algebro-geometric proof is provided for \cref{thm:focus}, and new related results concerning the envelopes and loci of circumparabolas are demonstrated.

\subsection*{Article structure}

In % \cref{sec:cp-isog,sec:cp-isot}
\cref{sec:ip-circle-inscribed,sec:ip-steiner-inscribed}
we describe %circumparabola 
inparabola phenomena over both circle- and Steiner-inscribed Poncelet families. %\cref{sec:ip-circle-inscribed,sec:ip-steiner-inscribed}
\cref{sec:cp-isog,sec:cp-isot}
focus on
circumparabola 
%inparabola
phenomena, over similarly-inscribed triangle families. A summary appears in \cref{sec:summary} as well as a link to narrated videos of some experiments.
%R
See YouTube playlist \cite{playlist2021-parabolas}.
In \cref{app:poncelet} the four circle-inscribed  Poncelet families studied are reviewed.
In \cref{app:conjug} the geometry of isogonal and isotomic conjugation is reviewed. In \cref{app:explicit} we derive explicit formulas for a triangle's circum- and inparabola.

 \section{Inparabolas over Circle-Inscribed Poncelet}
\label{sec:ip-circle-inscribed}

In this section we describe loci and envelope phenomena manifested by inparabolas $\P$ of circle-inscribed Poncelet families (\cref{fig:app-poncelet-circle-inscribed}), such that their focus $F$ is a fixed point on the circumcircle. Below, let the ``reflection'' of a point $A$ about $O$ be a point $A'$ such that the latter is the midpoint of $AA'$. 

Let $V$ (resp. $C$) denote the vertex of $\P$ (resp. the reflection of $F$ about $V$, i.e., the projection of $F$ or $V$ on the directrix), see \cref{fig:ip-circum-loci}. It can be shown the Simson line\footnote{The feet of perpendiculars (i.e., the pedal triangle) dropped from any point $F$ on the circumcircle onto the sides of a triangle are collinear on an line known as the ``Simson line'' \cite{mw}.}  $\S$ of a triangle with respect to $F$ is parallel to the directrix and tangent to $\P$ at $V$, $V$ is the projection of $F$ on said line \cite{akopyan2021-private,gheorghe2021-private}. So any properties of $V$ mentioned below are properties of projections of $F$ on $\S$.

Gallatly shows that the envelope of Simson lines over the bicentric family is a point \cite{gallatly1914-geometry}.

%Unlike \cref{sec:cp-isog} which was organized around parabola objects (vertex, directrix, etc.), here we group phenomena family-by-family.

\subsection{The inellipse family}

The inellipse family appears in \cref{fig:app-poncelet-circle-inscribed}(top left).  Referring to \cref{fig:ip-circum-loci}, over this family, one observes:

\begin{observation}
The locus of $V$ is a circle passing through $F$ and tangent to the inellipse (Poncelet caustic) at the antipode $U$ of $F$ on the locus.
\label{obs:ip-circum-locus}
\end{observation}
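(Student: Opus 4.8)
\emph{Proof plan.} The plan is to parametrise the inellipse Poncelet family by a single vertex and to show that the inparabola vertex $V$ is an affine function of one \emph{unimodular} quantity, so that its locus is visibly a circle; the tangency with the caustic then drops out of a one-line coordinate check.

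Normalise the circumcircle to the unit circle, write the caustic as $\E':x^{2}/\mathfrak a^{2}+y^{2}/\mathfrak b^{2}=1$ with $\mathfrak a+\mathfrak b=1$ (the admissibility condition for $R=1$), set $L=\mathfrak a-\mathfrak b\in(-1,1)$ and $K=2\mathfrak a\mathfrak b$, and fix the focus $F=p$ with $|p|=1$. Regarding vertices as unit complex numbers, I would first record that a chord $uv$ (with $|u|=|v|=1$) is tangent to $\E'$ precisely when $(Lu^{2}-1)v^{2}-2Kuv+(L-u^{2})=0$ — this is just the line–tangency condition $\mathfrak a^{2}\cos^{2}\theta+\mathfrak b^{2}\sin^{2}\theta=d^{2}$ for the chord rewritten in its endpoints. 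Hence, given a free vertex $u$, the remaining two vertices $v,w$ of \emph{the} Poncelet triangle through $u$ are the two roots of this quadratic — the third side closes automatically, by Poncelet's porism for the pair (unit circle, $\E'$) — so
\[
v+w=\frac{-2Ku}{1-Lu^{2}},\qquad vw=\frac{u^{2}-L}{1-Lu^{2}},
\]
and as $u$ runs over the circle every triangle of the family is produced (three times over).

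Next I would invoke the quoted fact that $V$ is the orthogonal projection of $F$ onto the Simson line $\S$ of $T$ with respect to $F$. With $A=u,B=v,C=w$, the foot of $F$ on $BC$ is $P_{a}=\tfrac12(v+w+p-vw/p)$ and the squared direction of $\S$ is $uvw/p$, so the foot-of-perpendicular formula gives $V=\tfrac12(p+P_{a})+\tfrac{uvw}{2p}\,\overline{p-P_{a}}$. Substituting the parametrisation, the resulting cubic-over-quadratic rational function of $u$ should collapse — this is the computational heart of the argument — to
\[
V=\frac{3p^{2}+L}{4p}\;+\;\frac{1-Lp^{2}}{4p^{2}}\,(uvw),
\]
i.e. $V=A_{0}+B_{0}\,e_{3}$ with $A_{0},B_{0}$ depending only on $p,L$ and $e_{3}=uvw=e^{i(\alpha+\beta+\gamma)}$ unimodular. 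Thus $V$ lies on the circle of centre $A_{0}$ and radius $|B_{0}|=|1-Lp^{2}|/4$; and since $e_{3}$, as a function of $u$, equals $u(u^{2}-L)/(1-Lu^{2})$, a degree-$3$ Blaschke product, it sweeps the whole unit circle, so $V$ sweeps the whole circle.

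It remains to identify this circle. Setting $u=p$ yields $V=p$, so $F$ is on it; its antipode is $U=2A_{0}-p=(p^{2}+L)/(2p)$, which in coordinates — using $1+L=2\mathfrak a$ and $1-L=2\mathfrak b$ — is $U=(\mathfrak a\cos\phi_{0},\mathfrak b\sin\phi_{0})$ for $F=(\cos\phi_{0},\sin\phi_{0})$, a point of $\E'$. Moreover $F-U=(\mathfrak b\cos\phi_{0},\mathfrak a\sin\phi_{0})$ is parallel to $(\cos\phi_{0}/\mathfrak a,\sin\phi_{0}/\mathfrak b)$, the normal direction of the tangent to $\E'$ at $U$; since $FU$ is a diameter of our circle, the circle is tangent to $\E'$ at $U$, as claimed. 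The step I expect to be delicate is the algebraic collapse to $V=A_{0}+B_{0}e_{3}$: a sign slip in the tangency quadratic or in the direction of the Simson line spoils it, so I would sanity-check the formula against the bicentric sub-case $\mathfrak a=\mathfrak b=\tfrac12$ ($L=0$, $e_{3}=u^{3}$), where it recovers Gallatly's fixed-point theorem and produces a circle of radius $\tfrac14$ externally tangent to the incircle.
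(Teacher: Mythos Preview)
Your argument is sound; the algebraic collapse you flag as delicate does go through (the identity $2K=1-L^{2}$ is exactly what kills the spurious $u$-dependence, and a direct check at, say, $L=0$, $p=1$, $u=i$ confirms your closed form $V=(3p^{2}+L)/(4p)+(1-Lp^{2})e_{3}/(4p^{2})$).  Note, however, that the paper does \emph{not} prove this Observation --- it is one of the ``unproven though supported by solid numerical evidence'' items.  The nearest thing is the later proof (attributed to Zaslavsky) of the more general statement that over \emph{any} circle-inscribed Poncelet family the locus of $V$ is a circle.  That argument also works in complex coordinates on the unit circumcircle but takes a different route: rather than parametrising by one vertex via the tangency quadratic for the concentric caustic, it encodes an \emph{arbitrary} inconic through its foci $f_{1},f_{2}$, invokes the Poncelet relations $a+b+c=f_{1}+f_{2}+\overline{f_{1}f_{2}}\,abc$ and $ab+bc+ca=f_{1}f_{2}+(\overline{f_{1}+f_{2}})\,abc$, and reads off that $V$ is affine in the unimodular product $abc$.

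Both approaches thus land on $V=\text{centre}+(\text{const})\cdot e_{3}$, but Zaslavsky's is more general while yours is more explicit.  The explicitness pays: you actually identify the centre and radius, and hence \emph{prove} that $F$ lies on the circle (via $u=p$) and that its antipode $U=(p^{2}+L)/(2p)=(\mathfrak a\cos\phi_{0},\mathfrak b\sin\phi_{0})$ sits on $\E'$ with $FU$ along the ellipse normal there --- i.e., the tangency claim, which the paper leaves entirely unproved.  In that sense your proposal strictly upgrades what the paper offers for this particular statement, at the cost of being tied to the concentric-caustic case.
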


Let $\rho$ denote the radius of the locus of $V$.

\begin{corollary}
The locus of $C$ is a circle of radius $2\rho$ centered on $U$.
\end{corollary}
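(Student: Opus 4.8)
The plan is to exhibit the locus of $C$ as the image of the locus of $V$ under a single explicit homothety. First I would record the exact relation between $C$ and $V$. For any nondegenerate parabola with focus $F$, vertex $V$ and directrix $\ell$, the vertex is the midpoint of the segment joining $F$ to its orthogonal projection on $\ell$; but that projection is exactly the point $C$ (equivalently, $C$ is the reflection of $F$ in $V$, by the definition adopted just before \cref{obs:ip-circum-locus}). Hence $C = 2V - F$ identically, i.e.\ $C$ is the image of $V$ under the homothety $h$ of ratio $2$ centered at $F$.

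Next I would feed in the assumed \cref{obs:ip-circum-locus}: as the triangle traverses the inellipse Poncelet family, $V$ sweeps a circle $\gamma$ of radius $\rho$ which passes through $F$ and whose point diametrically opposite $F$ is $U$. Consequently the center of $\gamma$ is the midpoint $M$ of the segment $FU$, with $|FM| = |MU| = \rho$. Now apply $h$: a homothety of ratio $2$ carries a circle of radius $\rho$ to a circle of radius $2\rho$, and it carries the center $M$ of $\gamma$ to $h(M) = 2M - F$, which equals $U$ precisely because $M$ is the midpoint of $FU$. Therefore $h(\gamma)$ is the circle of radius $2\rho$ centered at $U$. Since $C = h(V)$ and $V$ runs over all of $\gamma$, the locus of $C$ is exactly $h(\gamma)$, which is the asserted circle.

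Given \cref{obs:ip-circum-locus}, there is essentially no analytic obstacle — the statement is a pure homothety computation. The only point deserving a word of care is surjectivity: one must know that $V$ actually covers the whole of $\gamma$ (not merely a proper sub-arc) as the Poncelet triangle completes a full turn, so that $C = h(V)$ covers all of $h(\gamma)$; this is part of the content of \cref{obs:ip-circum-locus} and is evident in \cref{fig:ip-circum-loci}. The degenerate instant $V = F$ (a flattened parabola, occurring because $\gamma$ passes through $F$) simply contributes the single point $h(F) = F \in h(\gamma)$ and causes no difficulty.
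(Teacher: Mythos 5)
Your proof is correct and coincides with the paper's (implicit) reasoning: since $C$ is the reflection of $F$ about $V$, i.e.\ $C=2V-F$, the locus of $C$ is the image of the circular locus of $V$ (through $F$, with antipode $U$, granted by \cref{obs:ip-circum-locus}) under the homothety of ratio $2$ centered at $F$, which is the circle of radius $2\rho$ centered at $U$. Nothing further is needed.
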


Still referring to \cref{fig:ip-circum-loci}, let $W$ denote the reflection of $F$ about $U$. Since $V$ lies on a circle with $FU$ as a diameter (a numerical observation), $\triangle F V U$ is a right triangle. Since the Simson line is tangent to the inparabola at $V$ is must pass though $U$. The same is true for the directrix (it must pass through $W$. Therefore:

\begin{corollary}
Over the family, the envelope of the directrix (resp. Simson line) is $W$ (resp. $U$).
\end{corollary}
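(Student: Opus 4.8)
The plan is to derive this corollary synthetically from \cref{obs:ip-circum-locus} together with the two structural facts already recalled in the text: that the Simson line $\S$ of the Poncelet triangle with respect to $F$ is tangent to $\P$ at $V$ and meets $FV$ perpendicularly there (so $V$ is the foot of the perpendicular from $F$ to $\S$), and that the directrix $\delta$ of $\P$ is precisely the line through $C$ parallel to $\S$, where $C$ is the reflection of $F$ in $V$. The point is that once one shows that every Simson line of the family passes through the single fixed point $U$ and every directrix through the single fixed point $W$, the envelope of each of these one-parameter families of lines degenerates to that common point, which is exactly what the corollary asserts.

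First I would dispatch the Simson line. By \cref{obs:ip-circum-locus} the vertex $V$ runs, as the triangle varies over the family, on a fixed circle $\gamma$ through $F$ whose antipode of $F$ is the point $U$; hence $FU$ is a diameter of $\gamma$, and Thales' theorem gives $\angle F V U = 90^{\circ}$ for every member of the family, i.e.\ $VU\perp FV$. Since $\S$ also passes through $V$ perpendicular to $FV$, and there is only one line through $V$ orthogonal to $FV$, we get $U\in\S$ for every triangle in the family. Thus all these Simson lines are concurrent at the fixed point $U$, so their envelope is $U$.

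Next the directrix. Let $h$ be the homothety with center $F$ and ratio $2$; by the definitions of $C$ and $W$ as the reflections of $F$ in $V$ and in $U$, one has $h(V)=C$ and $h(U)=W$. A homothety carries a line to a parallel line, so $h(\S)$ is a line parallel to $\S$; it contains $h(V)=C$, and since $U\in\S$ by the previous paragraph it also contains $h(U)=W$. But $\delta$ is, by construction, the line through $C$ parallel to $\S$, and there is exactly one such line, so $\delta=h(\S)$. Hence $\delta$ passes through the fixed point $W$ for every member of the family, so the envelope of the directrix is $W$, and the corollary follows.

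This passage is elementary and short; the substantive content lives entirely in \cref{obs:ip-circum-locus}, which is stated only numerically. If one wanted a fully self-contained proof, the main obstacle would be to establish that observation: parametrize the inellipse Poncelet family (via the usual trigonometric parametrization on the circumcircle), write the inparabola with prescribed focus $F$ tangent to the three sidelines, locate its vertex $V$ as the foot of the perpendicular from $F$ onto the varying triangle's Simson line, and show that this point traces a circle through $F$, with tangency to the caustic at $U$ recovered from a degenerate-triangle limit. That computation, rather than its corollaries, is where the difficulty lies; a minor point to keep in mind is the standard convention that a pencil of concurrent lines has its common point as its (degenerate) envelope.
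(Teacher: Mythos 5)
Your proposal is correct and follows essentially the same route as the paper's own (informal) justification: Thales on the circle with diameter $FU$ gives $U\in\S$ via the perpendicularity of $FV$ and $\S$, and the ratio-$2$ homothety at $F$ transfers this to the directrix through $W$. You merely spell out the directrix step, which the paper asserts in one line.
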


\begin{figure}
    \centering
    \includegraphics[trim=100 0 50 0 ,clip,width=.8\textwidth]{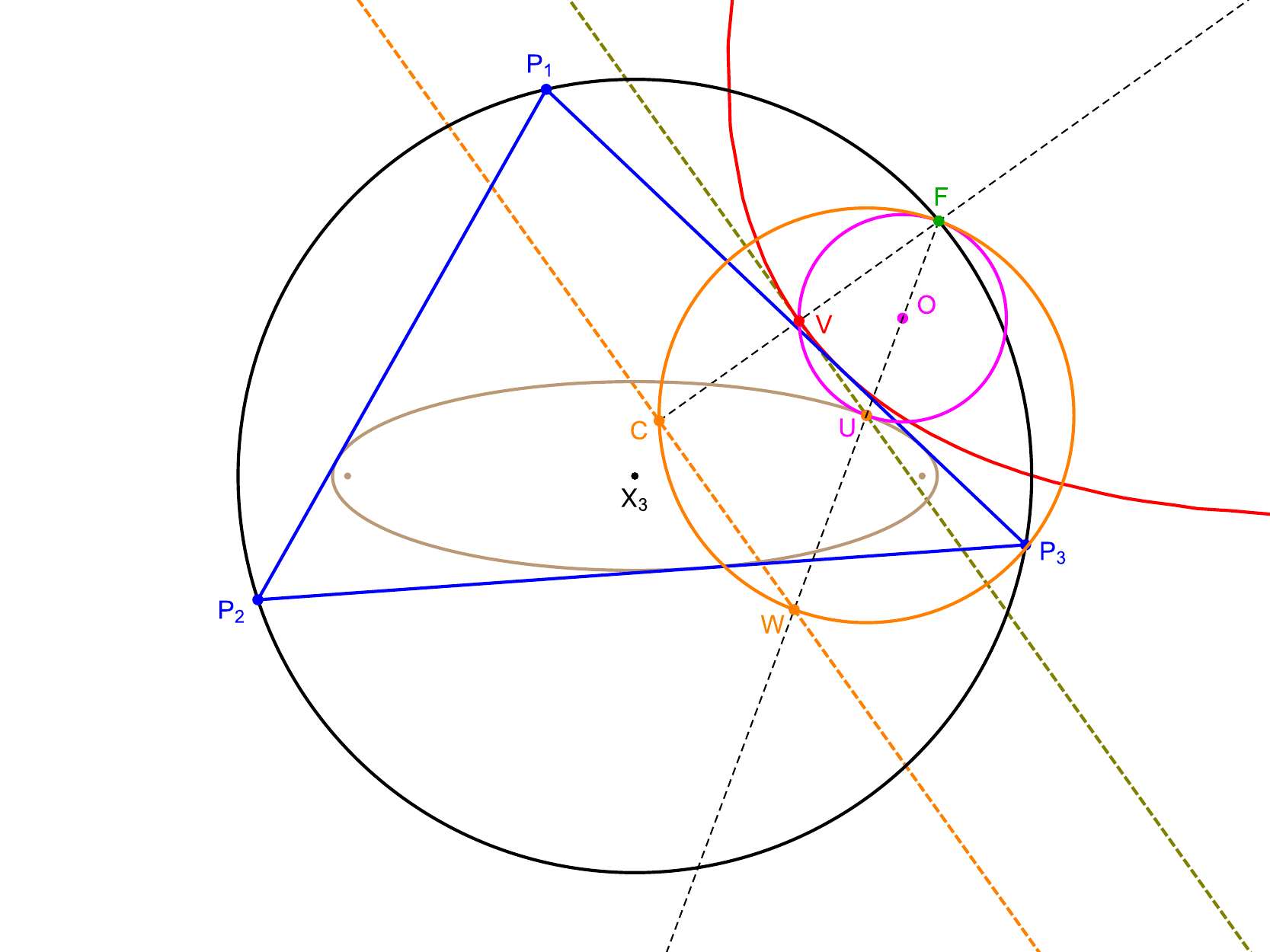}
    \caption{A Poncelet triangle (blue) is shown of the ``inellipse'' family, as well as the inparabola $\P$ (red) with focus at a fixed point $F$ on the circumcircle; let $V$ and $C$ denote the vertex of $\P$ and its projection on the directrix (dashed orange), respectively. The triangle's $F$-Simson line \cite{mw} $\S$ (dark green) is parallel to the directrix and tangent to $\P$ at $V$. Over the Poncelet family, (i) the locus of $V$ is a circle (magenta) passing through $F$ and tangent to the caustic at a point $U$; $O$ indicates its center. (ii) The locus of $C$ is a twice-sized circle (orange) which also contains $F$, and whose center is $U$. Let $W$ be the reflection of $F$ about $U$. Over the family, the directrix (resp. $F$-Simson line) pass through fixed $W$ (resp. fixed $U$).}
    \label{fig:ip-circum-loci}
\end{figure}

\subsubsection*{Over all foci} We can regard \cref{obs:ip-circum-locus} as associating with each $F$ a circular locus, and more specifically, the center $O$ of that locus, as well as a fixed point $W$ about which the directrix turns. Referring to \cref{fig:ip-circum-over-F}:

\begin{corollary}
Over all $F$ on the circumcircle, the locus of the touch-point $U$ of the circular locus of inparabola vertices is the caustic itself.
\end{corollary}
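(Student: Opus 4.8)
The plan is to reduce the statement to a short piece of point-set topology by first upgrading the tangency in \cref{obs:ip-circum-locus} to a metric relation between $F$ and $U$, and then inverting that relation via a continuous one-sided inverse. Write $\E'$ for the caustic (an ellipse with semi-axes $a,b$ satisfying $a+b=R$, concentric with $\C$); for $F\in\C$ let $\gamma_F$ be the circular locus of the vertex $V$, let $U=U(F)\in\E'$ be its point of tangency with $\E'$, and set $\Phi(F)=U(F)$. By \cref{obs:ip-circum-locus}, together with the numerical observation recorded just above it that $FU$ is a diameter of $\gamma_F$, the centre $M$ of $\gamma_F$ is the midpoint of the segment $FU$. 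Since $\gamma_F$ is tangent to $\E'$ at $U$, the two curves share a tangent line at $U$, so the radius $MU$ is perpendicular to it; hence $M$ lies on the normal line $n_U$ to $\E'$ at $U$. As $U\in n_U$ as well, the whole segment $FU$, and in particular $F$, lies on $n_U$. This gives the key identity: \emph{for every $F\in\C$, the chord joining $F$ to $U(F)$ is the normal to the caustic at $U(F)$.} Because $F$ lies strictly outside $\E'$, it sits on one of the two rays of $n_U$ issuing from $U$, and by continuity of $\Phi$ and connectedness of $\C$ it is always the same ray (a jump would force some $F$ to coincide with $U(F)\in\E'$). Call it $n_U^{\star}$; the figure suggests $n_U^{\star}$ is the outward normal ray, but what follows does not use which one it is.

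Next, define $\psi\colon\E'\to\C$ by letting $\psi(P)$ be the unique point in which the ray $n_P^{\star}$ meets $\C$; this is well defined because $\E'$ lies in the interior of the disk bounded by $\C$, so a ray issuing from a point of $\E'$ crosses $\C$ exactly once, and it is continuous because $\E'$ is smooth and strictly convex. The identity above says precisely that $\psi\bigl(\Phi(F)\bigr)=F$ for all $F\in\C$; that is, $\psi$ is a continuous left inverse of $\Phi$. Consequently $\Phi\colon\C\to\E'$ is a continuous injection of a circle into a circle, hence a homeomorphism onto its image; that image is compact, connected, and homeomorphic to a circle, so it must be all of $\E'$. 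Thus $\Phi$ is surjective: every point of the caustic is the tangency point $U(F)$ of some vertex circle, which is the assertion of the corollary.

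The one part I expect to require genuine work lies upstream of this topology: the first paragraph leans on $FU$ being \emph{exactly} a diameter of $\gamma_F$, i.e.\ on the ``antipode'' clause of \cref{obs:ip-circum-locus}, which is flagged there as only numerical. A self-contained treatment would first have to prove that clause — for instance by expressing $V$ as the foot of the perpendicular dropped from $F$ onto the Simson line of $F$ (as noted just before \cref{obs:ip-circum-locus}) and combining this with the parametrization of a triangle's inparabolas by their focus, then transporting the resulting expression around the one-parameter inellipse Poncelet family. Granting that clause, everything else is elementary: the normal-chord identity, the construction of $\psi$, and the injectivity of $\Phi$. In particular no degree theory and no case analysis on the eccentricity of $\E'$ is needed — which is worth noting, since a more naive approach through the envelope of normals would have to contend with the evolute of $\E'$ protruding outside $\C$ for sufficiently eccentric caustics. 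As a consistency check, at the four points where $\C$ meets the axes of $\E'$ the reflective symmetries of the configuration force $\gamma_F$ to be axis-symmetric, hence $U(F)$ to be the corresponding vertex of $\E'$, exactly as $\psi$ predicts.
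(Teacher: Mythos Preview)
Your argument is correct and supplies considerably more than the paper does: the paper states this corollary with no proof whatsoever, evidently regarding it as an immediate (and numerically evident) consequence of \cref{obs:ip-circum-locus}. So there is no paper-proof to compare against; you have furnished one where the paper has none.

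Your route---extracting from the tangency and antipode clauses of \cref{obs:ip-circum-locus} the geometric fact that $F$ always lies on the normal to $\E'$ at $U(F)$, then building a continuous one-sided inverse $\psi$ along a global normal-ray field, and closing with the topological fact that a continuous injection of $S^1$ into $S^1$ is onto---is a clean and genuine argument (granted the Observation). Two small remarks. First, you invoke continuity of $\Phi$ without comment; this is harmless here since the vertex $V$, hence the circle $\gamma_F$, hence its tangency point with $\E'$, all depend algebraically on $F$, but it deserves a sentence. Second, your honest caveat that the antipode clause is itself only numerical is exactly right and worth keeping: in the paper's logical structure the corollary inherits the unproven status of \cref{obs:ip-circum-locus}, and your topological reduction does not change that dependence.
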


\begin{observation}
Over all $F$ on the circumcircle, the locus of $O$ is an ellipse concentric and axis-aligned with the caustic of the inellipse family.
\end{observation}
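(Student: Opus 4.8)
The plan is to work in $\Cp$, normalizing the circumradius to $R=1$ so that $\C$ is the unit circle and the caustic $\E'$ is $x^{2}/a^{2}+y^{2}/b^{2}=1$ with $a+b=1$; I write $\delta:=a-b\in(-1,1)$ and restore the scale only at the end. By \cref{obs:ip-circum-locus} and its corollaries, $O$ is the midpoint of $F$ and $U$, and $U$ is the common point of the Simson lines of $F$ over the family; so it suffices to compute $U=U(F)$ (this will also re-establish the concurrency) and to check that $F\mapsto\tfrac12\bigl(F+U(F)\bigr)$ carries the circumcircle onto an ellipse centered at the origin with axes parallel to the coordinate axes.

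\textbf{Two ingredients.} First I would record the Simson line of a point $F=p$ on the unit circle with respect to an inscribed triangle with vertices $z_1,z_2,z_3$: starting from the pedal-foot formula $\tfrac12\bigl(p+z_i+z_j-z_iz_j/p\bigr)$ and writing $\sigma_1,\sigma_2,\sigma_3$ for the elementary symmetric functions of the $z_i$, the line through the three feet works out to
\[
p\,z-\sigma_3\bar z=\tfrac12\bigl(p^{2}+p\,\sigma_1-\sigma_2-\sigma_3/p\bigr),
\]
a routine complex-number computation (it is the midline of the directrix $p\,z-\sigma_3\bar z=p\,\sigma_1-\sigma_2$, i.e.\ the Steiner line through the orthocenter $\sigma_1$, and of the line through $F$ parallel to it). Second, I would encode the Poncelet condition: a chord $z_1z_2$ of the unit circle is tangent to $\E'$ exactly when
\[
(z_1+z_2)^{2}=(\delta\,z_1z_2+1)(z_1z_2+\delta),
\]
obtained by equating the squared half-chord $\cos^{2}\!\tfrac{u-v}{2}$ with the ellipse support value $a^{2}\cos^{2}\!\tfrac{u+v}{2}+b^{2}\sin^{2}\!\tfrac{u+v}{2}$ and simplifying using $a+b=1$.

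\textbf{Key step (and expected main obstacle).} The crux is to show that along this Poncelet family the symmetric functions degenerate to a rigid one-parameter pencil. I would argue that the two neighbors $z_2,z_3$ of a vertex $z_1$ are precisely the two roots of the tangency relation read as a quadratic in its second argument, $(1-\delta z_1^{2})w^{2}+(1-\delta^{2})z_1\,w+(z_1^{2}-\delta)=0$; Vieta then expresses $z_2+z_3$ and $z_2z_3$ as rational functions of $z_1$, and substituting these into $\sigma_1=z_1+(z_2+z_3)$, $\sigma_2=z_1(z_2+z_3)+z_2z_3$, $\sigma_3=z_1z_2z_3$ and simplifying with $a+b=1$ should collapse everything to
\[
\sigma_2\equiv-\delta,\qquad \sigma_1\equiv-\delta\,\sigma_3,
\]
with $\sigma_3$ (which ranges over the whole unit circle) the only free parameter. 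This is exactly where the Poncelet hypothesis $a+b=R$ enters, and I expect it to be the main obstacle --- not because the computation is long, but because one must recognize that the a priori transcendental Poncelet curve is this affine pencil in $(\sigma_1,\sigma_2,\sigma_3)$-space. On the way I would also check directly, from the same Vieta relations, that the third side $z_2z_3$ is automatically tangent to $\E'$ (Poncelet closure for $a+b=1$), so that no condition is lost.

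\textbf{Conclusion.} Feeding $\sigma_1=-\delta\,\sigma_3$ and $\sigma_2=-\delta$ into the Simson line equation kills the $\sigma_1$- and $\sigma_2$-terms and leaves $p\,z-\sigma_3\bigl(\bar z-\tfrac12(\delta p+1/p)\bigr)=\tfrac12(p^{2}+\delta)$; since $\sigma_3$ is non-constant along the family, a point $U$ lying on all these lines must make the coefficient of $\sigma_3$ vanish, whence $\bar U=\tfrac12(\delta p+1/p)$, i.e.\ $U(p)=\tfrac12\bigl(p+\delta/p\bigr)$, and the residual equation $pU=\tfrac12(p^{2}+\delta)$ then holds automatically --- which simultaneously re-proves that the Simson lines are concurrent and identifies their common point. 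With $p=e^{i\phi}$ and $a+b=1$ one gets $U=(a\cos\phi,\,b\sin\phi)$, i.e.\ $U$ lies on the caustic at the same angular parameter as $F$ (recovering the earlier corollary), so $O=\tfrac12(F+U)=\bigl(\tfrac{1+a}{2}\cos\phi,\ \tfrac{1+b}{2}\sin\phi\bigr)$. Restoring the circumradius $R$, the locus of $O$ is the ellipse
\[
\frac{x^{2}}{\bigl(\tfrac{R+a}{2}\bigr)^{2}}+\frac{y^{2}}{\bigl(\tfrac{R+b}{2}\bigr)^{2}}=1,
\]
concentric and axis-aligned with $\E'$, with semi-axes $\tfrac{R+a}{2}=a+\tfrac b2$ and $\tfrac{R+b}{2}=b+\tfrac a2$, as claimed.
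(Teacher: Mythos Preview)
The paper does \emph{not} prove this statement: it is listed as an unproven Observation supported only by numerical evidence. Your argument is correct and supplies the missing proof. All of the key computations check: the Simson-line equation $p\,z-\sigma_3\bar z=\tfrac12(p^{2}+p\sigma_1-\sigma_2-\sigma_3/p)$ is right, the chord--tangency condition $(z_1+z_2)^{2}=(\delta z_1z_2+1)(z_1z_2+\delta)$ is equivalent to $a^{2}\cos^{2}\tfrac{u+v}{2}+b^{2}\sin^{2}\tfrac{u+v}{2}=\cos^{2}\tfrac{u-v}{2}$ under $a+b=1$, Vieta on the quadratic in $w$ gives exactly $\sigma_1=-\delta\sigma_3$ and $\sigma_2=-\delta$, the Poncelet-closure check for the third side goes through, and the concurrency point $U=\tfrac12(p+\delta/p)=(a\cos\phi,\,b\sin\phi)$ falls out cleanly, yielding the explicit ellipse $x^{2}/\bigl(\tfrac{R+a}{2}\bigr)^{2}+y^{2}/\bigl(\tfrac{R+b}{2}\bigr)^{2}=1$.

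It is worth noting that your approach is the concentric-caustic specialization of Zaslavsky's proof of \cref{conj:ip-circum-vtx}, which the paper \emph{does} include. Zaslavsky works with a general inconic with foci $f_1,f_2$ and records the relations $\sigma_1=f_1+f_2+\overline{f_1f_2}\,\sigma_3$ and $\sigma_2=f_1f_2+(\overline{f_1+f_2})\,\sigma_3$; for a concentric caustic one has $f_1+f_2=0$ and $f_1f_2=-(a^{2}-b^{2})=-\delta$, which collapses to your $\sigma_1=-\delta\sigma_3$, $\sigma_2=-\delta$. So the ``key step'' you flag as the main obstacle is in fact already available in the paper in greater generality; what you add is the explicit identification of $U$ and $O$ and the resulting ellipse for the locus of $O$, which the paper left open. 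Your derivation via the tangency quadratic is a nice alternative route to the same symmetric-function identities, and it has the bonus of verifying Poncelet closure for $a+b=R$ along the way.
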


\begin{observation}
Over all $F$ on the circumcircle, the locus of $W$ is a circle concentric with the two Poncelet conics.
\end{observation}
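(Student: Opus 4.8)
The plan is to reduce the statement to a short coordinate computation by first pinning down, for each focus $F$, the point $U$ of \cref{obs:ip-circum-locus}. I would place the common center $O$ of $\C$ and the caustic $\E'$ at the origin with $\E'$ in axis-aligned position, so $\E'\colon x^2/a^2+y^2/b^2=1$ and $\C\colon x^2+y^2=R^2$ with $R=a+b$ (the Poncelet condition for the inellipse family). By \cref{obs:ip-circum-locus} together with the observations following it, for a fixed focus $F$ the inparabola vertices over the family lie on the circle having $FU$ as a diameter, and that circle is tangent to $\E'$ at $U$. Hence $U\in\E'$, and the center of that circle --- the midpoint of $FU$ --- lies on the normal to $\E'$ at $U$, so $F$ itself lies on the normal line to $\E'$ at $U$. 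Since $W=2U-F$, everything comes down to identifying the map $F\mapsto U$.

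Next I would parametrize $U=(a\cos t,b\sin t)$; the normal to $\E'$ at $U$ is the line $\{((a+\mu b)\cos t,(b+\mu a)\sin t):\mu\in\R\}$, and intersecting it with $\C$ yields, after expanding and using $R=a+b$, the quadratic $(\mu-1)\bigl(D\mu+D+2ab\bigr)=0$ with $D:=b^2\cos^2 t+a^2\sin^2 t$. So this normal meets $\C$ in exactly the two points $F_1(t)=R(\cos t,\sin t)$ (from $\mu=1$) and one further point $F_2(t)$, with $F_1(t)\neq F_2(t)$ since $2ab/D>0$. Writing $\tau(F)$ for the eccentric angle of $U(F)\in\E'$, we thus have $F\in\{F_1(\tau),F_2(\tau)\}$, and I would show $F=F_1(\tau)$ always. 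Because $F\mapsto U(F)$ is continuous and $F_1\neq F_2$ throughout, the sets $\{F:F=F_1(\tau(F))\}$ and $\{F:F=F_2(\tau(F))\}$ are disjoint closed subsets covering the connected circle $\C$, so it suffices to verify the claim at one focus. For $F=(R,0)$ I would use two explicit Poncelet triangles: the triangle with a vertex at $(R,0)$, which is isosceles about the $x$-axis and whose Simson line of that vertex is the altitude from it, i.e.\ the $x$-axis; and the triangle with a vertex at $(-R,0)$, which is isosceles about the $x$-axis with its side opposite $(-R,0)$ on the vertical tangent $x=a$ to $\E'$, and whose Simson line with respect to $(R,0)$ --- antipodal to that vertex, so the feet of the two perpendiculars from $(R,0)$ are the other two vertices --- is exactly that side $x=a$. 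Since every Simson line of $(R,0)$ over the family passes through $U$, these force $U=(a,0)$, which has eccentric angle $0$, matching $F=(R,0)=F_1(0)$; this closes the argument.

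With the correspondence in hand, $F=R(\cos\phi,\sin\phi)$ gives $U=(a\cos\phi,b\sin\phi)$ and therefore $W=2U-F=\bigl((2a-R)\cos\phi,(2b-R)\sin\phi\bigr)=\bigl((a-b)\cos\phi,-(a-b)\sin\phi\bigr)$ using $R=a+b$. As $\phi$ ranges over $[0,2\pi)$ this sweeps the circle $x^2+y^2=(a-b)^2$ centered at $O$, concentric with $\C$ and $\E'$; along the way one also reads off the radius $|a-b|$ (the circle degenerating to the point $O$ exactly in the equilateral case $a=b$).

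The step I expect to be the real obstacle is the branch resolution in the middle: establishing that the focus is $F_1$, not $F_2$ --- equivalently, that $U$ shares the eccentric angle of $F$. The symmetry-plus-continuity route sketched above is short but leans on \cref{obs:ip-circum-locus} and the numerical observation that $FU$ is a diameter; a more self-contained alternative would take an explicit parametrization of the inellipse Poncelet family and compute the Simson-line envelope of a fixed $F$ directly from the complex-coordinate formula for the Simson line --- heavier algebra, but no appeal to earlier numerical observations. Everything after the correspondence is a one-line verification.
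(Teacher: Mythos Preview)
The paper gives no proof of this statement: it is one of the many numerically-supported observations for which the author explicitly invites proofs. So there is nothing to compare your argument against; you are supplying what the paper lacks.

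Your argument is sound and the computation checks out. The normal-line parametrization is correct, the quadratic factors as $(\mu-1)(D\mu+D+2ab)=0$ exactly as you write, and the final substitution $W=2U-F=\bigl((a-b)\cos\phi,\ -(a-b)\sin\phi\bigr)$ is immediate once $R=a+b$ is used. The branch resolution via connectedness is legitimate: for each $F$ the two candidate normals give distinct points $F_1(\tau)\neq F_2(\tau)$, so the preimages are disjoint closed subsets of the circle, and your check at $F=(R,0)$ using the two explicit isosceles triangles (the Simson line of a vertex being its altitude, and the Simson line of an antipode-of-a-vertex being the opposite side) cleanly picks out $U=(a,0)$. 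As a bonus you obtain the radius $|a-b|$, which the paper does not record.

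The one honest caveat, which you already flag, is that the whole argument is conditional on \cref{obs:ip-circum-locus} (the circularity of the vertex locus and its tangency to the caustic at the antipode $U$ of $F$). That observation is itself unproven in the paper, so what you have is a clean reduction: \emph{given} \cref{obs:ip-circum-locus}, the present observation follows with the explicit radius $|a-b|$. If you want an unconditional proof, your suggested alternative via the complex-coordinate Simson-line formula is the right move --- in fact the ingredients are already in the paper, since Zaslavsky's proof of \cref{conj:ip-circum-vtx} gives $V=(1+k-\bar{k}\,abc)/2$ with $k=f_1+f_2-f_1f_2$; specializing to the concentric case $f_2=-f_1$ and tracking the antipode of $F$ on that circle would yield $U$ and hence $W$ directly, bypassing the numerical observation entirely.
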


\begin{figure}
    \centering
    \includegraphics[width=\textwidth]{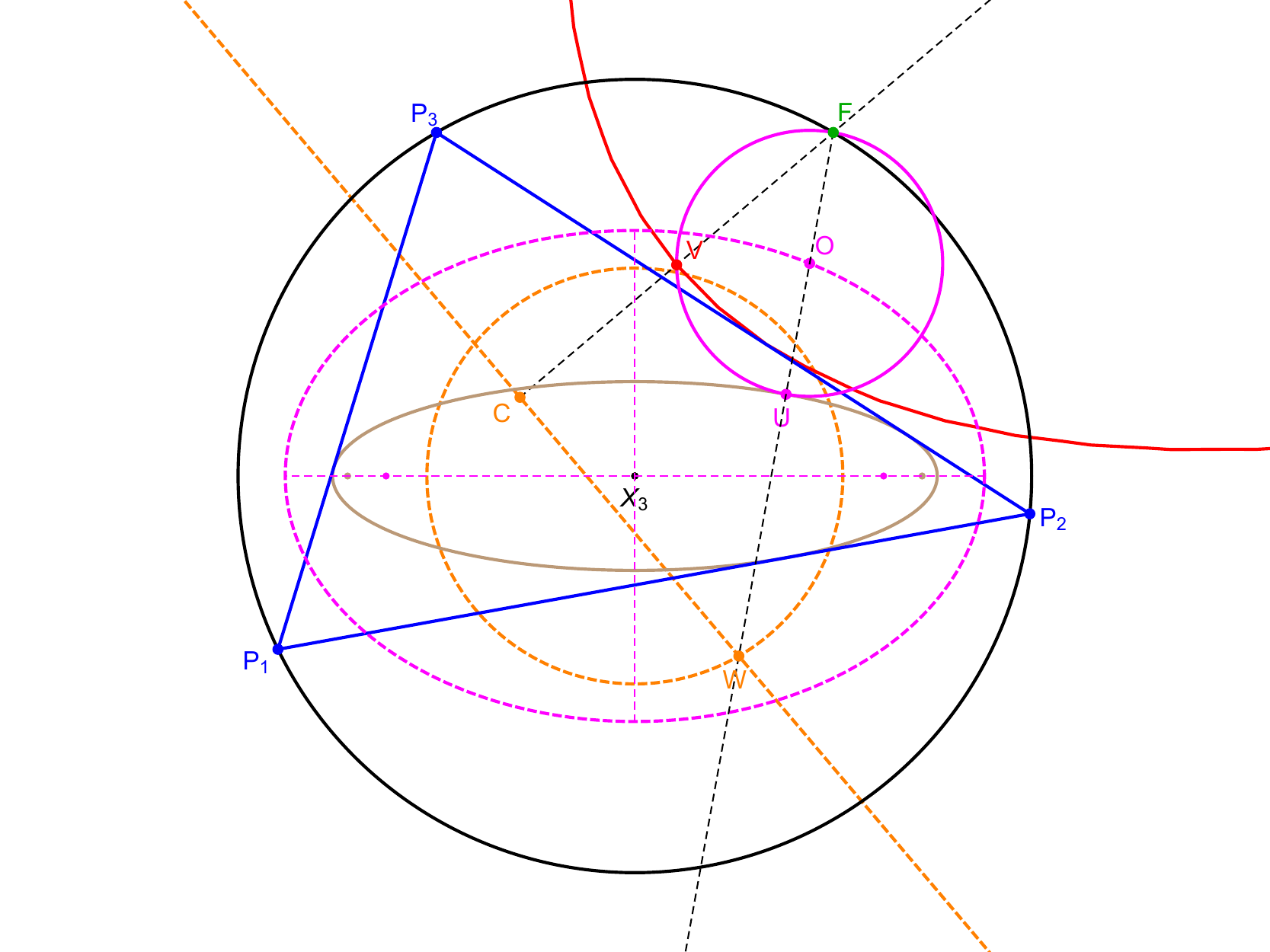}
    \caption{Over all $F$ on the circumcircle, the locus of the center $O$ of the circular locus of the vertex (magenta circle) is an ellipse (dashed magenta), concentric and axis-aligned with the caustic. Over all $F$, the locus of $W$ (envelope of the directrix) is a concentric circle (dashed orange).}
    \label{fig:ip-circum-over-F}
\end{figure}

\subsection{Bicentric family}

Referring to \cref{fig:ip-bic}(left), all observations pertaining to the circumcircle family remain true, namely:

\begin{observation}[Bicentric combo]
Over the Bicentric family, the locus of both $V$ and $C$ are circles, and all Simson lines (resp. directrices) pass through a fixed point $U$ (resp. $W$), where $U$ is antipodal to $F$ on the locus of $V$, and $W$ is the reflection of $F$ about $U$.
\label{obs:bic-combo}
\end{observation}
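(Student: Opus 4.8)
\medskip
\noindent\textbf{Proof plan.} The statement should be essentially a corollary of Gallatly's concurrency theorem, and the plan is to make this precise. I would first record the needed dictionary for an inparabola $\P$ of focus $F$ (all items already cited above): the directrix is parallel to the Simson line $\S$ of $F$ with respect to the triangle; the vertex $V$ is the foot of the perpendicular from $F$ onto $\S$; and the point $C$, the foot of $F$ on the directrix, equals $2V-F$, so $C$ is the reflection of $F$ in the \emph{line} $\S$. The second ingredient is Gallatly's result \cite{gallatly1914-geometry}: over the bicentric family the Simson lines of the fixed point $F$ are concurrent; call their common point $U$. Everything else is then elementary, and I would assemble it as follows.

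\emph{The two circles.} Since every $\S$ passes through $U$ and $V$ is the foot of the perpendicular from $F$ to $\S$, the angle $\angle FVU$ is right, so $V$ lies on the circle $\gamma$ with diameter $FU$; as the family is traversed $\S$ takes a full range of directions, so $V$ sweeps all of $\gamma$, whose antipode of $F$ is $U$ and whose radius is $\rho=|FU|/2$. Next, $C=2V-F$ is the image of $V$ under the homothety $h$ of centre $F$ and ratio $2$; hence the locus of $C$ is $h(\gamma)$, again a circle. As $h$ fixes $F$, doubles lengths, and carries the centre $\tfrac12(F+U)$ of $\gamma$ to $U$, this image circle has centre $U$, radius $2\rho$, passes through $F$, and has antipode $W:=2U-F$, the reflection of $F$ about $U$ — matching the corresponding corollary for the inellipse family.

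\emph{The pencil of directrices.} The Simson line $\S$ is the line through $V$ and $U$; applying $h$ gives the line through $h(V)=C$ and $h(U)=2U-F=W$. But $h(\S)$ is also the line through $C$ parallel to $\S$, which by the dictionary is exactly the directrix of $\P$. Hence the directrix is the line $CW$, so it passes through the fixed point $W$ for every triangle of the family. Together with $U$ being antipodal to $F$ on the locus of $V$ and $W=2U-F$, this is the whole assertion.

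\emph{The obstacle.} The only non-formal step is Gallatly's concurrency — the existence of $U$ — and that is the crux. For a self-contained treatment I would recompute it in complex coordinates: normalise the circumcircle to the unit circle and place the incentre at $d\in\R$ with $d^{2}=1-2r$; parametrise the poristic triangles by one vertex $t$ on the unit circle, so that the elementary symmetric functions $\sigma_{1},\sigma_{2},\sigma_{3}$ of the three vertices become rational functions of $t$; substitute these into the (symmetric-function) formula for the Simson line of the fixed point $f$ (built from its pedal feet $\tfrac12(b+c+f-bc\bar f)$ and the two cyclic images); and verify that the coefficient triple of $\S$ has the form $L_{0}+s(t)\,L_{1}$ for fixed triples $L_{0},L_{1}$ — equivalently that a $2\times 2$ determinant in $t$ vanishes identically. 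I expect this determinant identity to be the real work; a possible alternative is to exploit the rigidity of the poristic Euler data (invariant circumcircle, incircle, and nine-point radius, plus Feuerbach tangency) together with the fact that $\S$ bisects the segment from $F$ to the orthocenter. Either way, once $U$ is in hand the two circles, their radii, and the pencil of directrices follow immediately as above.
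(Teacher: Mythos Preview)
The paper does not actually give a proof of this Observation: in this article ``Observations'' are numerically supported claims, and \cref{obs:bic-combo} is simply stated after noting that ``all observations pertaining to the circumcircle family remain true.'' That said, your argument is precisely the one the paper gestures at. Gallatly's concurrency of Simson lines over the bicentric porism is cited verbatim in the paragraph introducing \cref{sec:ip-circle-inscribed}, and the right-angle/homothety steps you use to pass from the fixed point $U$ to the two circles and to $W=2U-F$ are exactly the steps the paper spells out (read in the opposite logical direction) in the two corollaries following \cref{obs:ip-circum-locus}. So substantively you and the paper agree; you have just supplied the connective tissue the paper leaves implicit.

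Where the paper does something genuinely different is later, in \cref{conj:ip-circum-vtx}: Zaslavsky's complex-coordinate proof computes $V=(1+k-\bar{k}\,abc)/2$, with $k=f_{1}+f_{2}-f_{1}f_{2}$, directly from the Poncelet constraints on the symmetric functions of $a,b,c$, for \emph{any} circle-inscribed family, and so obtains the circular $V$-locus without first invoking Gallatly. Your proposed ``self-contained'' recomputation of the concurrency via elementary symmetric functions is essentially this same calculation specialised to $f_{1}=f_{2}=d$; you will likely find it cleaner to follow Zaslavsky's route (compute $V$ first, then read off $U$ as the antipode of $F$) than to verify a $2\times2$ determinant identity in the Simson-line coefficients.

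One small gap: you assert that over the porism $\S$ ``takes a full range of directions,'' so that $V$ sweeps \emph{all} of $\gamma$ rather than just an arc. This is needed for the locus to be the full circle, but it is not justified in your write-up; a clean way to close it is to argue that the product $abc$ winds around the unit circle as the bicentric triangle is traversed, so that $V=(1+k-\bar{k}\,abc)/2$ indeed covers its circle.
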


Notice that unlike the case of the inellipse family, here the locus of $V$ is not tangent to the caustic. Referring to \cref{fig:ip-bic} (right), over all $F$:

\begin{observation}
Over all foci $F$ of inparabolas, the locus of the center $O$ of the (circular) loci of the vertex is an ellipse whose minor axis runs along $X_1 X_3$ and whose center is that segment's midpoint $X_{1385}$.
\end{observation}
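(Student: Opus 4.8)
The plan is to place the fixed circumcircle $\C$ as the unit circle with center $X_3=0$, write the (fixed) incenter as $X_1$ with $d:=|X_1|=|X_1X_3|$ (so the Chapple--Euler condition reads $d^2=1-2r$), and parametrize each Poncelet triangle by $a=x^2,\ b=y^2,\ c=z^2$ with $|x|=|y|=|z|=1$, the square roots chosen consistently along the family so that $X_1=-(xy+yz+zx)$ (the usual ``$a=x^2$'' incircle parametrization). Let $\sigma_i$ and $s_i$ denote the elementary symmetric functions of $\{a,b,c\}$ and of $\{x,y,z\}$. First I would establish a closed form for the inparabola vertex $V$ for a focus $F=p$ on $\C$. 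Using the fact (invoked in this section) that the directrix is parallel to the Simson line $\S$ of the triangle with respect to $p$ and is tangent to the parabola at $V$, the vertex is the foot of the perpendicular from $p$ to $\S$. Combining the standard equation of the Simson line on the unit circle, $z-m=\tfrac{abc}{p}(\bar z-\bar m)$ with $m=\tfrac12(a+b+c+p)$, with the condition that $p-V$ be orthogonal to $\S$, one solves a linear $2\times2$ system in $(V,\bar V)$ and gets
\[
V=\frac14\!\left(\sigma_1+3p-\frac{\sigma_2}{p}+\frac{\sigma_3}{p^2}\right),
\]
which passes the checks $V=p$ when $p$ is a vertex, and reduces correctly on the equilateral family.

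Next I would specialize to the bicentric family. The relations $|x|=|y|=|z|=1$ give $\bar s_1=s_2/s_3$ and $\bar s_2=s_1/s_3$, hence $|s_1|=|s_2|$; since the family keeps $s_2=-X_1$ fixed, $s_1$ sweeps the circle $|s_1|=d$, say $s_1=de^{i\phi}$, and then $s_3=s_2/\bar s_1=-e^{i(\psi+\phi)}$ where $X_1=de^{i\psi}$. Newton's identities then give $\sigma_1=s_1^2-2s_2=2X_1+d^2e^{2i\phi}$, $\sigma_3=s_3^2=\tfrac{X_1^2}{d^2}e^{2i\phi}$, and $\sigma_2=\sigma_3\bar\sigma_1=X_1^2+2X_1e^{2i\phi}$. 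Substituting into the vertex formula yields
\[
4V=\left(2X_1+3p-\frac{X_1^2}{p}\right)+e^{2i\phi}\left(d-\frac{X_1}{dp}\right)^{\!2}.
\]
As the family parameter $\phi$ runs, the second term sweeps a circle — this reproves the ``Bicentric combo'' observation — so the center of the vertex locus is the $\phi$-independent part,
\[
O(p)=\frac{X_1}{2}+\frac{3p}{4}-\frac{X_1^2}{4p}.
\]

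Finally, letting $F=p$ run over $\C$, so that $1/p=\bar p$, we have $O(p)=\tfrac{X_1}{2}+\tfrac34\,p-\tfrac{X_1^2}{4}\,\bar p$, a map of the form $K+\lambda p+\mu\bar p$ with $K=X_1/2$, $\lambda=\tfrac34>0$ real and $\mu=-X_1^2/4$; as $p$ traverses the unit circle this traces an ellipse centered at $K=X_1/2$, the midpoint of $X_1X_3$, i.e.\ $X_{1385}$. Writing $X_1=de^{i\psi}$, $p=e^{it}$, and setting $u=t-\psi$ gives $O(p)-\tfrac{X_1}{2}=e^{i\psi}\bigl(\tfrac{3-d^2}{4}\cos u+i\,\tfrac{3+d^2}{4}\sin u\bigr)$; since $d^2=1-2r<3$, the semi-axis of length $\tfrac{3-d^2}{4}$ along the direction $e^{i\psi}$ (i.e.\ along line $X_1X_3$) is the minor one, while the major axis, of length $\tfrac{3+d^2}{4}$, is perpendicular to $X_1X_3$ — which is exactly the assertion.

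The one substantive point, and the place where an external result is unavoidable, is the claim that the bicentric porism is faithfully described by ``$s_2=-X_1$ fixed, $s_1$ on the circle $|s_1|=d$'': this is Poncelet's closure theorem (equivalently, the Chapple--Euler condition), which one must invoke to know that a full arc of parameter values $\phi$ corresponds to genuine nondegenerate triangles. Everything else is a bounded symmetric-function manipulation, and it is a pleasant accident that $O(p)$ comes out independent of $\phi$, so one needs only that infinitely many $\phi$ yield triangles, not that all of them do; the degree-$2$ appearance of $p$ versus $\bar p$ in $O(p)$ is what forces the locus to be an ellipse rather than a circle whenever $X_1\neq X_3$.
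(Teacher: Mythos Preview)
Your argument is correct, and in fact the paper offers no proof of this Observation at all --- it is listed as an experimental finding only --- so you have supplied genuinely new content rather than reproduced anything.

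That said, your approach is very much in the spirit of the one proof the paper does contain in this section, namely Zaslavsky's proof of \cref{conj:ip-circum-vtx}. Your vertex formula
\[
V=\tfrac14\Bigl(\sigma_1+3p-\tfrac{\sigma_2}{p}+\tfrac{\sigma_3}{p^2}\Bigr)
\]
is exactly the general-$p$ version of Zaslavsky's $p=1$ expression (indeed, substituting his relations $\sigma_1=f_1+f_2+\overline{f_1f_2}\,\sigma_3$ and $\sigma_2=f_1f_2+\overline{f_1+f_2}\,\sigma_3$ into yours recovers his, up to what appears to be a transcription slip in the paper's displayed formula). What you add is the bicentric specialization via the $a=x^2$ parametrization, which makes the $\phi$-dependence of $V$ transparent and lets you read off $O(p)$ directly; the final step, recognizing $O(p)=\tfrac{X_1}{2}+\tfrac34 p-\tfrac{X_1^2}{4}\bar p$ as an ellipse centered at $\tfrac12(X_1+X_3)=X_{1385}$ with minor semi-axis $\tfrac{3-d^2}{4}$ along $e^{i\psi}$, is clean and correct.

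Two small remarks. First, you do not actually need $\phi$ to sweep a full period: since $|V-O(p)|=\tfrac14|d-X_1/(dp)|^2$ is constant in $\phi$, any three distinct Poncelet triangles already pin down the circle and hence its center $O(p)$; invoking Poncelet closure is therefore even lighter than you suggest. Second, your closing sentence about ``degree-$2$ appearance of $p$ versus $\bar p$'' is slightly misphrased --- after writing $1/p=\bar p$ the map is linear in $p$ and $\bar p$ separately; the reason it is a genuine ellipse (not a circle) when $X_1\neq X_3$ is that $|\lambda|=\tfrac34\neq\tfrac{d^2}{4}=|\mu|$ for $0<d<1$.
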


\begin{observation}
The locus of $U$ is an ellipse internally tangent to the caustic, with minor axis along $X_1 X_3$, and centered on $X_1$.
\end{observation}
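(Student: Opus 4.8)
\section*{Proof proposal for the final observation (locus of $U$ over the bicentric family)}

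The plan is to compute $U=U(F)$ in closed form and read off the locus. Work in the complex plane with circumradius $R=1$, the fixed circumcenter $X_3$ at the origin and the fixed incenter $X_1$ at a positive real number $d=|X_1X_3|$; Euler's condition $d^2=R(R-2r)$ then reads $r=(1-d^2)/2$, so the caustic $\E'$ is the circle $|z-d|=r$. Parametrize the bicentric family by the standard ``square‑root'' device: write the vertices as $\alpha^2,\beta^2,\gamma^2$ with $|\alpha|=|\beta|=|\gamma|=1$ and with signs chosen so that the incenter equals $-(\alpha\beta+\beta\gamma+\gamma\alpha)$. Put $p_1=\alpha+\beta+\gamma$, $p_2=\alpha\beta+\beta\gamma+\gamma\alpha$, $p_3=\alpha\beta\gamma$. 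The incenter condition forces $p_2=-d$, and self‑inversivity of $t^3-p_1t^2+p_2t-p_3$ (the three roots lie on the unit circle) forces $p_1=-dp_3$. Writing the single Poncelet parameter as $u:=p_3$, the elementary symmetric functions of the vertices become
\begin{equation*}
\sigma_1=p_1^2-2p_2=d^2u^2+2d,\qquad \sigma_2=p_2^2-2p_1p_3=d^2+2du^2,\qquad \sigma_3=p_3^2=u^2 .
\end{equation*}

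Next bring in the Simson line. For $|p|=1$, the foot of the perpendicular from $p$ onto the side through two vertices $b,c$ is $\tfrac12(p+b+c-bc/p)$; the line through the three feet has direction a square root of $\sigma_3/p$, so its equation is
\begin{equation*}
z-\frac{\sigma_3}{p}\,\bar z=\frac12\!\left(\sigma_1+p-\frac{\sigma_2}{p}-\frac{\sigma_3}{p^2}\right).
\end{equation*}
Now fix the focus $F=p$ on the circumcircle and substitute $\sigma_1,\sigma_2,\sigma_3$; the right‑hand side becomes $\tfrac12 u^2\big(d^2-\tfrac{2d}{p}-\tfrac1{p^2}\big)+\tfrac12\big(2d+p-\tfrac{d^2}{p}\big)$. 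Demand that a point $z=U$ lie on this line for every admissible triangle, i.e. for all $u$ on the unit circle; matching the coefficients of $u^2$ and of $1$ gives a consistent pair of equations whose solution is
\begin{equation*}
U=U(p)=d+\tfrac12\bigl(p-d^2\,\bar p\bigr).
\end{equation*}
(This re‑proves Gallatly's theorem that the Simson lines of $F$ over the bicentric family concur at one point, and pins that point down in terms of $F$.)

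Finally let $F=p=e^{i\psi}$ run over the circumcircle. Then $U=d+\tfrac{1-d^2}{2}\cos\psi+i\,\tfrac{1+d^2}{2}\sin\psi$, an ellipse centered at $d=X_1$ whose axes are the line $X_1X_3$ (the real axis) and its perpendicular, with semi‑axes $\tfrac{1-d^2}{2}$ and $\tfrac{1+d^2}{2}$; since $0<d<1$ the first is the shorter, so the minor axis lies along $X_1X_3$. Moreover $\tfrac{1-d^2}{2}=r$, so the minor‑axis endpoints $d\pm r$ are exactly the two points where $\E'$ meets $X_1X_3$, and at each the ellipse and $\E'$ share the tangent line perpendicular to $X_1X_3$; as the other semi‑axis $\tfrac{1+d^2}{2}>r$, the caustic sits inside this ellipse and touches it precisely at those two points — i.e. the ellipse is internally tangent to $\E'$. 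The only delicate step is getting the signs right in the square‑root parametrization (the self‑inversivity constraints); once the triple $(\sigma_1,\sigma_2,\sigma_3)(u)$ above is correct everything else is routine, and one can cross‑check it against the companion statement, since the locus center is $O=\tfrac{U+p}{2}=\tfrac d2+\tfrac14\bigl(3p-d^2\bar p\bigr)$, an ellipse about the midpoint $X_{1385}$ of $X_1X_3$ with minor axis along $X_1X_3$.
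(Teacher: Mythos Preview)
The paper does not supply a proof for this observation; it is listed among the numerically supported but unproven phenomena. Your argument is therefore not a re-derivation but an original proof, and it is correct.

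Your approach is the same complex-number machinery Zaslavsky uses in the paper's proof of \cref{conj:ip-circum-vtx}, pushed one step further. The square-root parametrization of bicentric vertices gives $(\sigma_1,\sigma_2,\sigma_3)=(d^2u^2+2d,\;d^2+2du^2,\;u^2)$ with $|u|=1$, and plugging these into the standard Simson-line equation makes the right-hand side affine in $u^2$. Matching the constant and $u^2$ coefficients yields a consistent pair, which simultaneously re-proves Gallatly's concurrence and pins down $U(p)=d+\tfrac12\bigl(p-d^{2}\bar p\bigr)$. From there the ellipse with center $X_1$, semi-axes $r=\tfrac{1-d^{2}}{2}$ and $\tfrac{1+d^{2}}{2}$, and minor axis along $X_1X_3$ is immediate. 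Your cross-check against the $O$-locus centered at $d/2=X_{1385}$ is a nice confirmation and matches the companion observation exactly.

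One small wording point: your computation shows the semi-minor axis equals the caustic radius $r$ while the semi-major axis $\tfrac{1+d^2}{2}$ exceeds it, so the caustic lies \emph{inside} the $U$-locus and touches it at the two co-vertices $d\pm r$. This agrees with the paper's figure caption (``co-vertices are a diameter of the caustic'') but sits a little awkwardly with the phrase ``internally tangent to the caustic'' in the statement itself; both you and the paper use the phrase loosely. The tangency and containment you derive are the right ones.
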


\begin{observation}
The locus of $W$ is a circle with center on the $X_1 X_3$ axis.
\end{observation}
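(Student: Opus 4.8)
The plan is to compute $W$ explicitly in complex coordinates and then let the focus move. Normalize so that the fixed circumcircle $\C$ is the unit circle with $X_3=0$ and write $X_1=I$; Euler's relation $d^2=R(R-2r)$ then reads $|I|^2=1-2r$, so fixing the bicentric data is the same as fixing $I$. Represent a Poncelet triangle of the family by $A=\alpha^2,\ B=\beta^2,\ C=\gamma^2$ with $|\alpha|=|\beta|=|\gamma|=1$, the square roots chosen so that $-\beta\gamma,\,-\gamma\alpha,\,-\alpha\beta$ are the midpoints of the opposite arcs; then the incenter is $X_1=-(\alpha\beta+\beta\gamma+\gamma\alpha)=-s_2$, which is precisely the constraint carving out the bicentric family. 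With $s_1=\alpha+\beta+\gamma$ and $q=\alpha\beta\gamma$, the conjugate relation $\ol{s_2}=s_1/q$ forces $s_1=-\ol{I}\,q$, so the family becomes a one-parameter family with parameter $q$ on the unit circle, for which $X_4=A+B+C=s_1^2-2s_2=\ol{I}^2q^2+2I$ and $ABC=q^2$.

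The second step is to identify the directrix of $\P$. Since $\P$ is an inparabola with focus $F=f$ on $\C$, the reflections of $f$ across the three sidelines lie on its directrix; using the chord-reflection formula $z\mapsto B+C-BC\ol z$ for side $BC$, a short computation shows these three points differ from $X_4$ by real multiples of $\sqrt{ABC/f}=q/\sqrt f$. Hence the directrix is the line through $X_4$ of direction $q/\sqrt f$ --- equivalently, the Steiner line of $f$, i.e.\ the parallel to $\S$ through the orthocenter. I would then impose that a point $W$ lie on this line for \emph{every} $q$ on the unit circle, that is, $(W-2I-\ol{I}^2q^2)\sqrt f/q\in\R$ for all $|q|=1$; using $\ol q=1/q$ and separating the terms constant in $q$ from those carrying $q^2$, this collapses to $W=W(f)=2I-I^2/f=2I-I^2\ol f$. (As a by-product this re-derives the concurrency asserted in \cref{obs:bic-combo}, and since $U=\tfrac12(F+W)$ it also produces the ellipse loci of $U$ and of the vertex-circle center recorded in the two preceding observations.)

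Finally, let $f$ run over $\C$. Then $\ol f$ runs over the unit circle, so $-I^2\ol f$ runs over the circle of radius $|I|^2$ about the origin, and $W(f)$ runs over the circle of radius $|I|^2$ centered at $2I$. Undoing the normalization, the locus of $W$ is the circle of radius $d^2/R=R-2r$ centered at $2X_1-X_3$, the reflection of the circumcenter in the incenter; in particular its center lies on line $X_1X_3$, as claimed --- with the sharper statement obtained for free.

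I expect the main obstacle to be the first step: producing a parametrization of the bicentric family transparent enough that $X_4$ and $ABC$ become explicit one-variable functions. The identity $s_1=-\ol{I}\,q$ is the crux; once it is available the remainder is routine conjugation, the only delicate point being that the sign ambiguity in $\sqrt{ABC/f}$ is harmless because only the unoriented direction of the directrix is used.
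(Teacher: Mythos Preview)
Your argument is correct. The paper does not prove this statement at all --- it is recorded only as a numerical \emph{Observation}, and the abstract explicitly invites proofs --- so there is no ``paper's own proof'' to compare against. Your complex-coordinate computation supplies exactly such a proof, and in fact sharpens the claim: you obtain the explicit center $2X_1-X_3$ (the reflection of the circumcenter in the incenter) and radius $d^2/R=R-2r$, neither of which the paper states.

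Two minor remarks for polish. First, you use that $q=\alpha\beta\gamma$ genuinely varies as the Poncelet triangle moves; this is true (e.g.\ because $q^2=ABC$ is nonconstant already for the concentric case $I=0$), and since the constraint $(c_1+\bar c_2)\bar q=(\bar c_1+c_2)q$ would pin $q^2$ to a single value otherwise, variation over any arc suffices to force $c_1=-\bar c_2$. You might say this explicitly rather than ``for all $|q|=1$''. Second, the self-inversive cubic $t^3+\bar Iq\,t^2-It-q$ need not have all three roots on the unit circle for arbitrary $q$; you are not actually using that it does, only that the triangles in the family arise from \emph{some} values of $q$ on the unit circle, so this is harmless --- but the sentence ``the family becomes a one-parameter family with parameter $q$ on the unit circle'' slightly overstates what you need and what you have shown.

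Your derivation also recovers, as you note, the fixed-point property of the directrix in \cref{obs:bic-combo} and the elliptic loci of $U$ and $O$ in the two preceding observations; those too are unproved in the paper, so this single computation disposes of several of its open items at once.
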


\begin{figure}
    \centering
    \includegraphics[width=\textwidth]{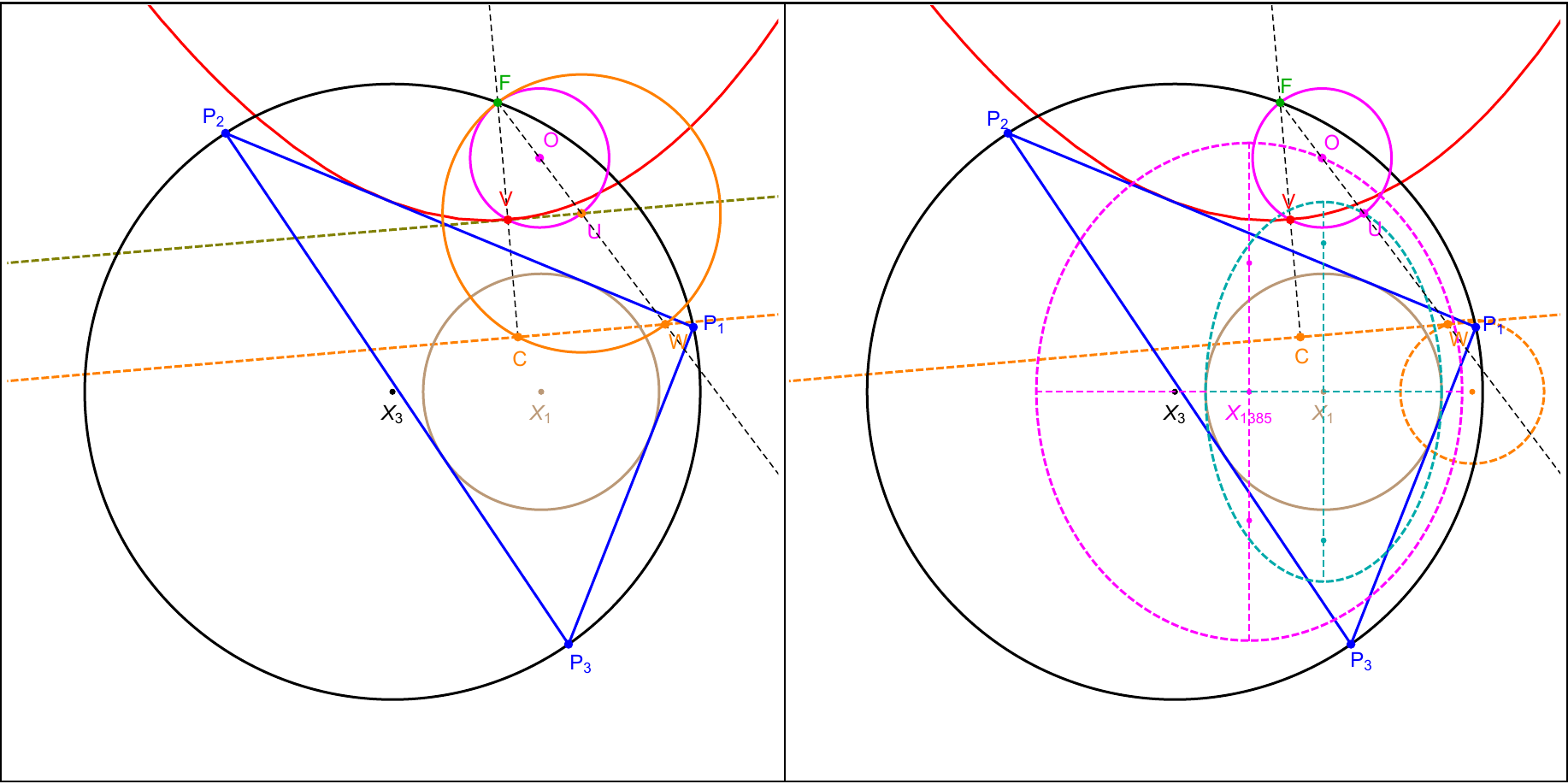}
    \caption{\textbf{Left:} Shown is a Poncelet triangle (blue) in the bicentric family. Consider inparabolas $\P$ (red) with focus a fixed point $F$ on the circumcircle. As in the inellipse family, the locus of both $V$ and $C$ are circles containing $F$ (magenta and orange); as before, over the family, the directrix passes through a fixed point $W$ which is diametrically opposite to $F$ on the $C$ locus. \textbf{Right:} over all $F$ on the circumcircle, the locus of the center $O$ of the circular locus of the vertex, is an ellipse (dashed pink) with minor axis along the $X_1 X_3$ line and center at their midpoint $X_{1385}$. The locus of $U$ is a second axis-aligned ellipse (dashed light blue) whose center is the incenter $X_1$, and whose co-vertices are a diameter of the caustic. Finally, the locus of $W$ is a circle centered on line $X_1 X_3$.}
    \label{fig:ip-bic}
\end{figure}

\subsection{MacBeath family}

Referring to \cref{fig:ip-mb-locus}, the claims in \cref{obs:bic-combo} are also valid for the MacBeath family. Recall the following fact: the orthocenter $X_4$ of a triangle lies on the directrix of any inscribed parabola \cite{akopyan2007-conics}. As said before, the foci of the MacBeath inellipse are the circumcenter $X_3$ and the orthocenter $X_4$ \cite[MacBeath inellipse]{mw}, and are therefore stationary over the MacBeath family. Therefore:

\begin{corollary}
Over the MacBeath family, the envelope of the directrix of inparabolas with focus a fixed point $F$ on the circumcircle, is the $X_4$-focus of the caustic. 
\end{corollary}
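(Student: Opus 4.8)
The plan is to deduce this directly from a classical incidence fact together with the special nature of the MacBeath caustic, so that the ``envelope'' collapses to a single point. First I would invoke the theorem of Akopyan--Zaslavsky \cite{akopyan2007-conics} already used above: for any triangle, the orthocenter $X_4$ lies on the directrix of every parabola inscribed in that triangle. Next I would record that the caustic of the MacBeath family is the MacBeath inellipse, whose foci are the circumcenter $X_3$ and the orthocenter $X_4$ \cite{mw}; since both the circumcircle and the caustic are fixed along a Poncelet family, the points $X_3$ and $X_4$ are stationary as the triangle runs over the family. In particular $X_4$ is a point of the plane that does not depend on the choice of Poncelet triangle.

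Combining these two facts: for the prescribed focus $F$ on the circumcircle, whichever Poncelet triangle the inparabola $\P$ is inscribed in, its directrix passes through $X_4$, and $X_4$ does not move. Hence, as the triangle traverses the MacBeath family, the directrices form a one-parameter family of lines all incident to the single fixed point $X_4$, i.e., (a subfamily of) the pencil of lines through $X_4$. The envelope of such a pencil is the base point itself --- the characteristic point obtained by intersecting two infinitesimally close members of a concurrent pencil is their common point --- so the envelope of the directrix is $X_4$, which is precisely the ``$X_4$-focus'' of the caustic. This would complete the proof.

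The only step that is not purely formal is checking non-degeneracy: one must make sure the directrices genuinely rotate about $X_4$ rather than all coinciding with one fixed line. I would argue this from \cref{obs:bic-combo} (asserted to hold over the MacBeath family): the vertex $V$ traces a genuine circle, and by the remark relating the directrix and the Simson line $\S$ stated at the beginning of \cref{sec:ip-circle-inscribed}, the directrix is parallel to $\S$ with $V$ the foot of the perpendicular from $F$ onto it. If all directrices were parallel and concurrent they would be one fixed line, forcing $V$ --- the foot of $F$ on that line --- to be fixed, contradicting that $V$ sweeps a nondegenerate circle. So the directrices fill out a true pencil and the envelope is the point $X_4$; as a by-product this identifies the fixed point $W$ of \cref{obs:bic-combo} with $X_4$, a consistency check on the earlier observations. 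I expect this non-degeneracy bookkeeping, rather than any geometric difficulty, to be the only real content beyond quoting \cite{akopyan2007-conics} and \cite{mw}.
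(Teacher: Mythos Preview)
Your proposal is correct and follows essentially the same route as the paper: the corollary is deduced immediately from the classical fact that $X_4$ lies on the directrix of every inscribed parabola, together with the stationarity of $X_4$ as a focus of the MacBeath caustic. Your added non-degeneracy check (that the directrices genuinely rotate rather than coincide) is more careful than the paper, which simply omits this point.
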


\begin{observation}
Over all $F$, the locus of both $O$ and $U$ are circles. The former is centered on the midpoint $X_{140}$ of the $X_3 X_5$ segment. The latter is concentric with the caustic on $X_5$ and tangent to the caustic at the latter's vertices.
\end{observation}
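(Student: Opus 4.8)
The plan is to reduce the statement to elementary midpoint bookkeeping, starting from two classical facts about an inparabola $\P$ with focus $F$ on the circumcircle of a triangle $T$: its directrix passes through the orthocenter $X_4(T)$ \cite{akopyan2007-conics}, and the Simson line $\S$ of $F$ with respect to $T$ is simultaneously the tangent to $\P$ at its vertex $V$ (so $V$ is the foot of the perpendicular from $F$ onto $\S$) and a line that bisects the segment $F\,X_4(T)$; equivalently, the directrix is the parallel to $\S$ through $X_4(T)$. Over the MacBeath family the circumcircle and the points $X_3,X_4,X_5$ are all stationary, with $X_5$ the center of the caustic and $X_3,X_4$ its foci. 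I will then invoke \cref{obs:bic-combo} (asserted valid for MacBeath): over the family the Simson lines concur at a fixed point $U=U(F)$, the vertex $V$ runs on a circle on which $F$ and $U$ are antipodal -- so that circle's center is $O=O(F)=\tfrac12(F+U)$ -- and the directrices concur at $W=W(F)=2U-F$. By the Corollary immediately preceding the statement, that common directrix-point is the stationary focus $X_4$ of the caustic, so $W(F)=X_4$ for every $F$.

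From $2U-F=X_4$ we get $U(F)=\tfrac12(F+X_4)$, hence $O(F)=\tfrac12(F+U)=\tfrac14(3F+X_4)$. Thus both $F\mapsto U$ and $F\mapsto O$ are homotheties centered at the fixed point $X_4$, of ratios $\tfrac12$ and $\tfrac34$ respectively. Applying them to the circumcircle $\C$ (center $X_3$, radius $R$): $U$ sweeps the circle of radius $R/2$ about $\tfrac12(X_3+X_4)=X_5$, and $O$ sweeps the circle of radius $3R/4$ about $\tfrac14(3X_3+X_4)$, which is precisely the midpoint of $X_3$ and $X_5$, i.e.\ $X_{140}$. This already yields both loci as circles with the stated centers.

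It remains to identify the $U$-locus -- the circle of radius $R/2$ about $X_5$ -- with a circle tangent to the caustic at the caustic's vertices. The clean route is to recognize it as the \emph{auxiliary circle} of the MacBeath inellipse: the feet of the perpendiculars dropped from a focus of an ellipse onto its tangent lines lie on the circle concentric with the ellipse whose radius equals the semi-major axis. Applied to the focus $X_3$ (the circumcenter) and to the three tangents that are the sidelines of $T$, the feet are the side midpoints, which lie on the nine-point circle of $T$, i.e.\ the circle of radius $R/2$ about $X_5$. Hence the MacBeath inellipse has semi-major axis $R/2$ and its auxiliary circle \emph{is} the $U$-locus; a circle concentric with an ellipse and of radius equal to its semi-major axis meets the ellipse exactly at the two endpoints of the major axis and is tangent there, which is the asserted contact ``at the caustic's vertices.''

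I expect the genuinely geometric content to be confined to that last, auxiliary-circle, step; everything upstream is the midpoint algebra above, and it is entirely conditional on the unproven concurrence/circularity input (\cref{obs:bic-combo}). The two places needing care are: (i) the step $W(F)=X_4$, which presupposes that the directrices genuinely vary over the family -- otherwise they need not have a \emph{unique} common point -- but this is immediate since distinct triangles of the family have distinct inparabolas with focus $F$; and (ii) verifying that no degenerate configuration (for instance $F$ near a vertex of the instantaneous triangle, or $\S$ in a limiting direction) corrupts the homothety description of the loci. Modulo these, the argument is complete.
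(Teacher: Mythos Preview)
The paper offers no proof of this statement --- it is recorded as a numerical Observation --- so there is nothing on the paper's side to compare against. Your argument, by contrast, is essentially a complete and correct proof: from $W(F)=X_4$ and $W=2U-F$ you obtain $U(F)=\tfrac12(F+X_4)$ and $O(F)=\tfrac14(3F+X_4)$, whence the two loci are the images of the circumcircle under homotheties centered at $X_4$, with centers $X_5$ and $X_{140}$ exactly as claimed. The identification of the $U$-locus with the auxiliary circle of the MacBeath inellipse (via the pedal-circle property applied to the focus $X_3$ and the three tangent sidelines, whose feet are the side midpoints on the nine-point circle) is the right way to get the tangency at the caustic's vertices.

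One remark: you present the argument as conditional on \cref{obs:bic-combo}, but this dependence is avoidable with ingredients you already cite. You note that the Simson line of $F$ bisects the segment $FX_4$; since $X_4$ is stationary over the MacBeath family, every Simson line therefore contains the fixed midpoint $(F+X_4)/2$, which pins down $U$ directly. Then $V$, being the foot of the perpendicular from $F$ onto a line through $U$, lies on the Thales circle with diameter $FU$, giving the circular $V$-locus with center $O=\tfrac12(F+U)$ without invoking the unproven observation. With that small rerouting your proof becomes unconditional and settles a claim the paper explicitly leaves open.
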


\begin{figure}
    \centering
    \includegraphics[width=.8\textwidth]{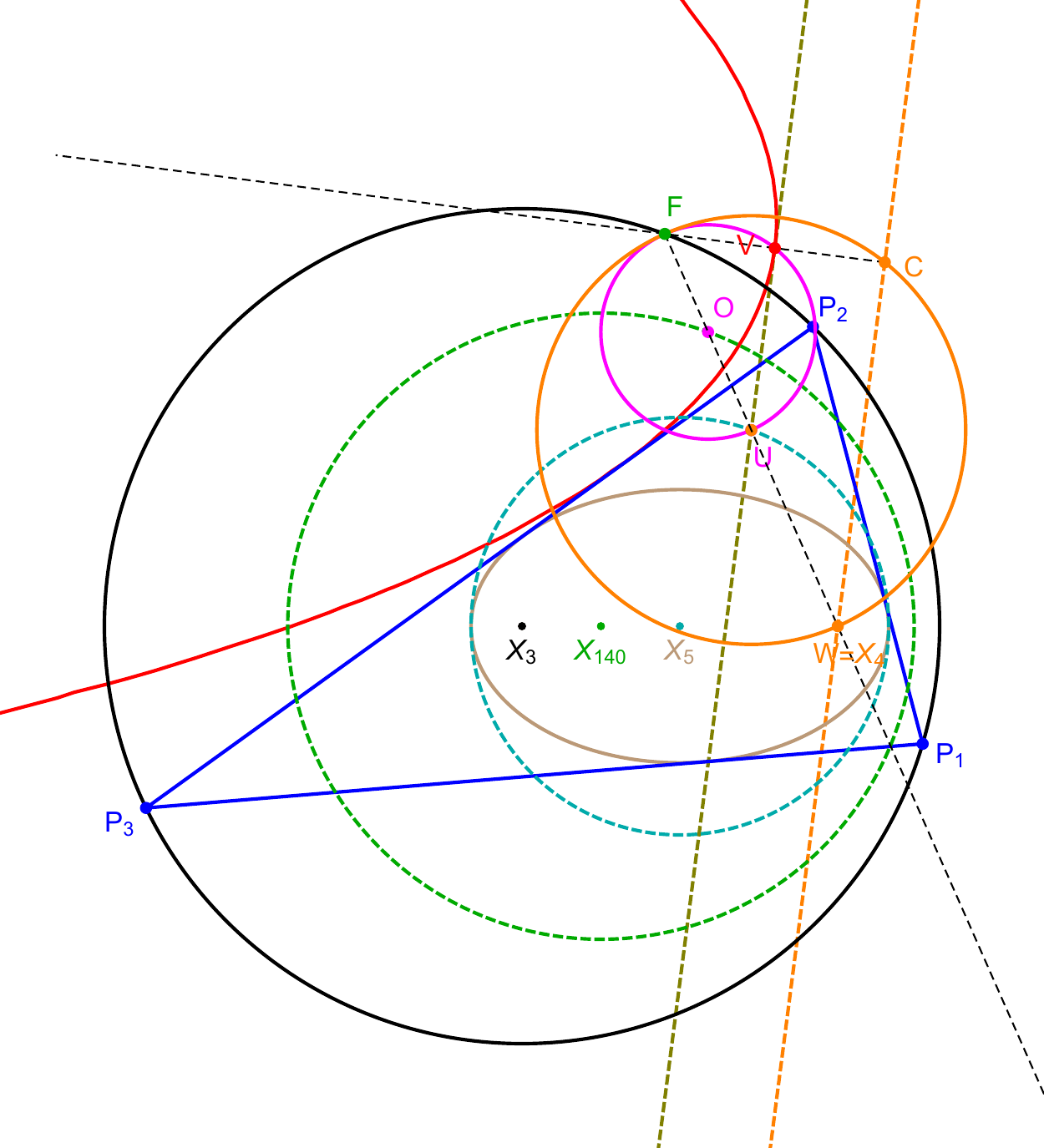}
    \caption{Over the MacBeath family, the locus of $V$ and $C$ are still circles (solid pink and orange, respectively). Since in general the directrix of an inscribed parabola $\P$ passes through $X_4$, we get a powerful stationarity: for any $F$ on the circumcircle, and over any Poncelet triangle, the directrix of $\P$ passes through (the right) focus of the MacBeath caustic, labeled $W=X_4$. All Simson lines (dashed dark green) pass through the antipode $U$ of $F$ on the circular locus of $V$. Over all $F$, the locus of $O$ is a circle (dashed green) centered at the midpoint $X_{140}$ of the $X_2 X_3$, and that of $U$ (the fixed point of the Simson lines) is a circle concentric and internally tangent to the caustic.}
    \label{fig:ip-mb-locus}
\end{figure}

Referring to \cref{fig:ip-mb-polar}:

\begin{observation}
Over the MacBeath family, the locus of the circumcenter $X_3'$ of polar triangles with respect to inparabolas with fixed focus $F$ on the circumcircle is a line. As $F$ sweeps the circumcircle, said linear locus envelops a conic whose major axis coincides with the major axis of the MacBeath inellipse. Said conic has one focus on the center $X_5$ of the MacBeath inconic (caustic).
\end{observation}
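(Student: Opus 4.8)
The plan is to reduce everything to a single rational parametrization and then verify two polynomial identities — one for the line, one for the envelope conic. Place the common Euler line of the MacBeath family along the $x$-axis, with $X_3=(0,0)$ the fixed circumcenter (circumradius $R$), $X_4=(\delta,0)$ the fixed orthocenter, and $X_5=(\delta/2,0)$ the fixed nine-point center and center of the caustic. Since $X_3$ and $X_4$ are stationary, the family can be described without reference to the caustic: with $X_3$ at the origin the orthocenter equals the vertex sum, so the MacBeath family is exactly the triangles inscribed in $x^2+y^2=R^2$ with $P_1+P_2+P_3=X_4$. Writing $P_k=\bigl(R\frac{1-t_k^2}{1+t_k^2},\,R\frac{2t_k}{1+t_k^2}\bigr)$ and imposing $\sum P_k=X_4$, Poncelet closure makes $t_2,t_3$ the two roots of a quadratic whose coefficients are rational in $t:=t_1$; hence the elementary symmetric functions of $t_2,t_3$, and therefore every symmetric function of the three sidelines, are rational in $t$. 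Set $F=\bigl(R\frac{1-s^2}{1+s^2},\,R\frac{2s}{1+s^2}\bigr)$, with $s$ the half-angle parameter of the fixed focus.

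Next I would build the inparabola $\mathcal P=\mathcal P(t,s)$ from its focus and directrix only. Its directrix $d$ is the Steiner line of $F$ with respect to $T$ — the line through the three reflections $F_i$ of $F$ across the sidelines $m_i$ — which is a line precisely because $F$ lies on the circumcircle, is parallel to the Simson line of $F$, and passes through the fixed orthocenter $X_4$ (the standard fact, cited in the text, that $X_4$ lies on every inparabola directrix, together with the classical Steiner line property). All of $d$, the $F_i$, and the axis direction are rational in $(t,s)$. The touchpoint $P_i^\ast$ of $\mathcal P$ on $m_i$ is then the intersection of $m_i$ with the line through $F_i$ perpendicular to $d$: indeed $|P_i^\ast F|=|P_i^\ast F_i|$ equals the distance from $P_i^\ast$ to $d$, so $F_i\in d$ must be the foot of the perpendicular from $P_i^\ast$ to $d$. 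This yields the polar (contact) triangle $T'=\{P_1^\ast,P_2^\ast,P_3^\ast\}$, rational in $t$ for fixed $s$, and its circumcenter $X_3'(t,s)$ — the solution of two perpendicular-bisector equations — is again a rational function of $t$.

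With $X_3'(t,s)$ in hand, the first claim (the locus over the family is a line) becomes an identity: there exist $A(s),B(s),C(s)$ with $A(s)\,x_{3'}(t,s)+B(s)\,y_{3'}(t,s)\equiv C(s)$ as rational functions of $t$; one fixes $A,B,C$ by matching two values of $t$ and then checks that the numerator of $A x_{3'}+B y_{3'}-C$ vanishes identically in $t$, producing the line $\ell_s: A(s)x+B(s)y=C(s)$. For the envelope claim I would compute the envelope of $\{\ell_s\}$ by adjoining $A'(s)x+B'(s)y=C'(s)$, solving the $2\times 2$ linear system for $(x,y)=(X(s),Y(s))$, and eliminating $s$ to obtain a polynomial relation $\Gamma(x,y)=0$, which I expect to have degree $2$. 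To identify $\Gamma$: reflection in the $x$-axis sends the family to itself and sends $s\mapsto -s$, $F\mapsto$ its mirror, so $\ell_{-s}$ is the mirror of $\ell_s$ and $\Gamma$ is symmetric about the $x$-axis; hence one principal axis of $\Gamma$ is the Euler line, which is the major axis of the MacBeath inellipse. That it is the \emph{major} axis, and that one focus is $X_5=(\delta/2,0)$, then follow by reading the center $x_0$ and semi-axes $p>q$ off the explicit equation of $\Gamma$ and checking $x_0\pm\sqrt{p^2-q^2}=\delta/2$.

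The main obstacle is the symbolic bulk: producing $X_3'(t,s)$ in closed form and pushing the Poncelet constraint (the symmetric functions of $t_2,t_3$ as rational functions of $t$) through the circumcenter and the envelope computation is mechanical but heavy, and the degree bookkeeping for $\Gamma$ needs care (one must also confirm $\ell_s$ genuinely varies with $s$, so that the envelope is an honest conic and not a point or a line). A shortcut worth attempting first: test numerically on several concrete MacBeath families whether $\ell_s$ is simply the perpendicular bisector of the segment $FX_5$. If so, the first claim is immediate (the line does not depend on $t$), and the envelope claim is the classical fact that the envelope of the perpendicular bisectors of a fixed point $Z=X_5$ and the points of the circle $C(X_3,R)$ is the conic with foci $X_3,X_5$, major axis along the line $X_3X_5$ (the Euler line), and $2a=R$ — matching the statement exactly, the unnamed second focus being $X_3$. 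In that case the whole argument collapses to verifying the single polynomial identity $|X_3'F|^2=|X_3'X_5|^2$ over the family.
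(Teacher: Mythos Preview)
The paper offers no proof of this statement: it is labeled an \emph{Observation}, supported only by numerical evidence, and the introduction explicitly solicits proofs. There is therefore nothing in the paper to compare your proposal against; a successful execution of your plan would supply what the paper lacks.

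Your main computational route is sound. Identifying the MacBeath family with $\{P_1+P_2+P_3=X_4\}$ on a fixed circumcircle is correct and is the cleanest parametrization available; since $X_3'$ is symmetric in the three vertices, it is expressible through the elementary symmetric functions of the $t_k$ and hence is rational in the single Poncelet parameter $t$, exactly as you claim. Your construction of the contact point $P_i^\ast$ as $m_i\cap(\text{perpendicular to }d\text{ through }F_i)$ is correct (reflecting the focus across a tangent lands on the directrix at the foot of the perpendicular from the touchpoint), and your reading of ``polar triangle'' as the contact triangle is the right one: the polar of a vertex with respect to an inconic is the chord joining the two adjacent touchpoints, so the polar triangle has those touchpoints as vertices. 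The line test (fix $A,B,C$ from two values of $t$, then verify the numerator vanishes identically) and the envelope-by-differentiation are routine once $X_3'(t,s)$ is in hand, and your $s\mapsto -s$ symmetry argument correctly forces one axis of the envelope onto the Euler line.

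Two cautions. First, the symbolic bulk is real: each $P_i^\ast$ already carries the directrix direction (itself built from all three sidelines), and the perpendicular-bisector intersection stacks another layer, so expect high-degree numerators before cancellation; this is a CAS computation, not a hand one. Second, your shortcut---that $\ell_s$ is the perpendicular bisector of $FX_5$---is an attractive guess and, if true, would pin down the unnamed second focus as $X_3$ with semi-major axis $R/2$ (more than the Observation asserts); but you have given no reason for it beyond pattern-matching to the conclusion. Do test it numerically on two or three concrete MacBeath pairs before investing effort, and be prepared to fall back to the brute-force route if the identity $|X_3'F|=|X_3'X_5|$ fails.
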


\begin{observation}
Over the MacBeath family, the locus of the Brianchon point of inparabolas with fixed focus $F$ on the circumcircle is an ellipse. In general, the locus of the center of said ellipses is not a conic.
\end{observation}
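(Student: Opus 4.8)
The plan is to reduce everything to an explicit computation in a fixed Cartesian frame and then verify two polynomial facts with computer algebra. First I would put the MacBeath caustic in standard position — foci $X_3,X_4$ on the $x$-axis, center $X_5$ at the origin — and recall that the MacBeath family is the excentral family of the bicentric (Chapple) porism, which admits a classical rational parametrization: with circumcircle of radius $R$ and incircle of radius $r$ satisfying $d^2=R(R-2r)$, the vertices of the bicentric triangle are rational functions of $t=\tan(\theta/2)$ with $\theta$ the circumcircle-angle of one vertex; applying the external-bisector construction gives the MacBeath triangle $T(t)$ with vertices rational in $t$, so that every triangle center we need — in particular the orthocenter $X_4$ (which, by hypothesis on this family, is actually \emph{stationary}) — is rational in $t$.

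Next, for a fixed focus $F$ on the circumcircle, I would build the inparabola $\mathcal{P}(t)$ of $T(t)$ directly from focus–directrix data. Its focus is $F$; its directrix $\ell(t)$ is the Steiner line of $F$ with respect to $T(t)$ (the line through the three reflections of $F$ in the sidelines, collinear precisely because $F$ is on the circumcircle), which is parallel to the Simson line and passes through the fixed orthocenter $X_4$ — so $\ell(t)$ is simply the line through the fixed point $X_4$ in the (rationally computable) Simson direction. From $F$ and $\ell(t)$, the contact point of $\mathcal{P}(t)$ on a sideline $m_i$ of $T(t)$ is the intersection of $m_i$ with the line through the $m_i$-reflection of $F$ perpendicular to $\ell(t)$ (this follows from the parabola reflection property: the reflection of the focus in a tangent lies on the directrix, and the nearest such point to the contact point is the foot of the perpendicular). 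The Brianchon point $\Pi(t)$ is then the common point of the three cevians joining each vertex of $T(t)$ to the opposite contact point, giving $\Pi(t)=(X(t),Y(t))$ with $X,Y$ explicit rational functions of $t$.

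To prove the first assertion I would look for a conic $\alpha X^2+\beta XY+\gamma Y^2+\delta X+\varepsilon Y+\zeta=0$ through this parametrized point: substituting $(X(t),Y(t))$ and clearing denominators yields a polynomial in $t$ whose coefficients are linear in $(\alpha,\dots,\zeta)$, and one checks the solution space is exactly one-dimensional, so the Zariski closure of the locus is precisely that conic; a check on the sign of its $2\times2$ leading minor (or a boundedness argument) shows it is an ellipse $\mathcal{E}_F$. The fact established earlier that $\Pi(t)$ lies on the Steiner circumellipse of $T(t)$ gives a useful consistency cross-check. For the second assertion, I would parametrize $F=F(u)$ rationally on the circumcircle, compute the center $c(u)=(c_x(u),c_y(u))$ of $\mathcal{E}_{F(u)}$ as the pole of the line at infinity with respect to the conic just found, and then rule out conicity by exhibiting six parameter values $u_1,\dots,u_6$ for which the $6\times6$ matrix with rows $(c_x^2,\,c_xc_y,\,c_y^2,\,c_x,\,c_y,\,1)$ evaluated at $u_j$ is nonsingular; since any conic containing the center-locus would contain those six points, this is a complete (computer-assisted) proof that the center-locus is not a conic, and the degree of the minimal polynomial relating $c_x$ and $c_y$ identifies the curve it actually is.

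The main obstacle is entirely computational and has two layers. The first is simply carrying the MacBeath parametrization (built on the excentral map of the bicentric porism) through the reflection, perpendicular, and cevian-intersection steps without the rational expressions becoming unmanageable — resultants and careful choice of the parameter will be needed. The second, and genuinely nontrivial, point is verifying that the a priori high-degree parametrized curve $t\mapsto\Pi(t)$ has image of degree exactly two, i.e., that the expected large cancellation occurs; I see no synthetic reason forcing a conic, so at present this step is "compute and confirm the coincidence," and a conceptually cleaner argument would presumably follow from understanding the correspondence $\{F\text{ on circumcircle}\}\leftrightarrow\{\Pi\text{ on Steiner circumellipse}\}$ well enough to see how it interacts with the Poncelet motion of this particular family.
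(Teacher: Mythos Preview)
The paper does not prove this statement: it is presented as an \emph{Observation}, and the introduction is explicit that observations are unproven and supported only by numerical experiment. There is therefore no argument in the paper to compare your proposal against; if carried out, your plan would constitute a first proof.

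Your plan is sound. The identification of the directrix with the Steiner line of $F$ (hence a line through the stationary $X_4$) is correct, the focal-reflection construction of the three contact points is correct, and the linear-algebra test for conicity of a rationally parametrized curve is the standard tool. For the second assertion, exhibiting one configuration and six parameter values $u_1,\dots,u_6$ with nonsingular $6\times6$ Veronese matrix suffices, since ``in general not a conic'' is a single negative claim.

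One simplification is worth pointing out: routing through the excentral-of-bicentric construction risks introducing square roots (sidelengths, bisector directions) and is unnecessary. With the circumcircle taken as the unit circle in $\mathbb{C}$, the MacBeath caustic has foci $f_1=0$ and $f_2=h$ (the fixed orthocenter), and the relation Zaslavsky uses in the paper's proof of \cref{conj:ip-circum-vtx} reduces to $a+b+c=h$; the companion relation $ab+bc+ca=\bar h\,abc$ then follows automatically from $|a|=|b|=|c|=1$. So the MacBeath family is exactly the one-parameter family of unit-circle triples with fixed vertex sum, and every object you need---Simson direction, reflections of $F$, contact points, the cevian intersection $\Pi$---is polynomial in $a,b,c$ and their conjugates $1/a,1/b,1/c$. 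Eliminating symmetric functions against $a+b+c=h$ leaves everything rational in a single circle parameter, which should make the expected degree-drop to a conic verifiable with modest computer algebra rather than the heavy resultant computation you anticipate.
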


\begin{figure}
    \centering
    \includegraphics[trim=0 0 0 0,clip,width=.8\textwidth]{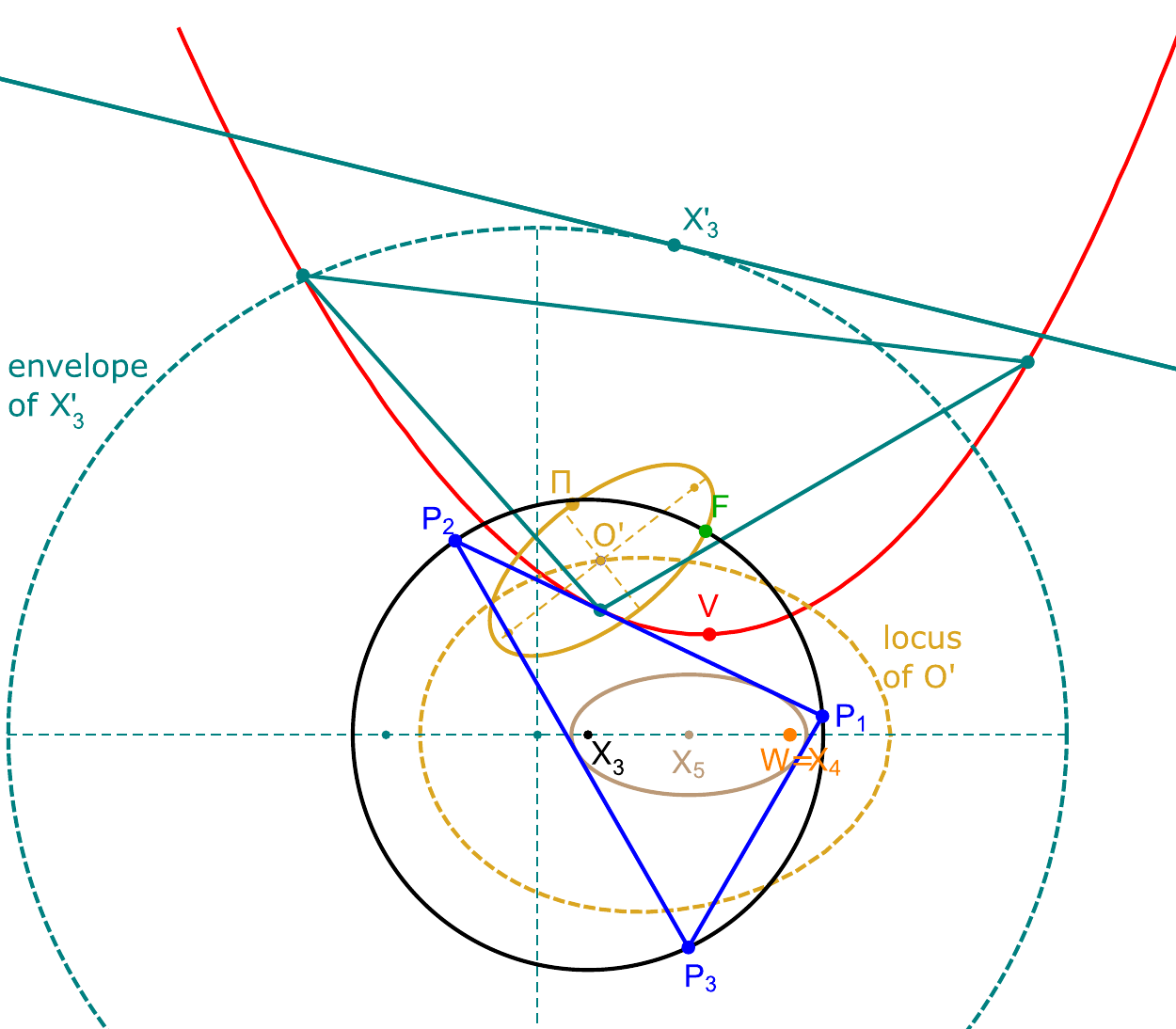}
    \caption{Over the MacBeath family, the locus of the Brianchon point \cite{mw} $\Pi$ of inparabolas $\P$ with fixed focus $F$ on the circumcircle (black) is a conic (gold). Over all $F$, the locus of their centers $O'$ is an oval (dashed gold). Over said Poncelet family, the locus of the circumcenter $X_3'$ of polar triangles with respect to $\P$ is a line (solid teal). Over all $F$, said lines envelop a conic whose major axis coincides with the caustic's, and with one focus at the center ($X_5$) of the MacBeath caustic.}
    \label{fig:ip-mb-polar}
\end{figure}

\subsection{Brocard family}

Referring to \cref{fig:ip-broc-locus}, the claims in \cref{obs:bic-combo} are also valid for the Brocard family. Furthermore:

\begin{observation}
The locus of point $W$, common to all directrices, is a circle with center collinear with the centers of the two Poncelet conics, i.e., on the $X_3 X_{39}$ line.
\end{observation}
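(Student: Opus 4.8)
I would work in the complex plane with the fixed circumcircle taken to be the unit circle, so that $X_3=0$; write $F=p$ with $|p|=1$, let $v_1,v_2,v_3$ (all on $|z|=1$) be the Poncelet vertices and $e_1,e_2,e_3$ their elementary symmetric functions. Then the orthocenter is $X_4=e_1$, while $|e_3|=1$ and $\overline{e_1}=e_2/e_3$ for every triangle in the family. The first step is to describe the directrix of the inparabola $\P$ with focus $F$. Both ingredients are already recorded in the paper: the directrix passes through $X_4$ \cite{akopyan2007-conics} and is parallel to the Simson line $\S$ of $F$; and a short pedal-triangle computation gives that $\S$ has direction $\sqrt{e_3/p}$ (a well-defined direction, the two square roots differing by a sign). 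Writing the directrix as the line through $e_1$ with direction $\sqrt{e_3/p}$ and using $e_3\overline{e_1}=e_2$, the incidence condition $(W-e_1)/\sqrt{e_3/p}\in\R$ collapses to
\[
p\,W-e_3\,\overline{W}\;=\;p\,e_1-e_2 .
\]

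A point $W$ is common to the directrices of the inparabolas with focus $F$ as the triangle ranges over the Brocard porism precisely when the right-hand side $p\,e_1(T)-e_2(T)$ is an affine function of $e_3(T)$ along the porism; imposing this for every $F$ (i.e.\ every $p$ on the circle) and subtracting two values of $p$ amounts to the \emph{affine parametrization}
\[
e_1 \;=\; a_0+\overline{b_0}\,e_3,\qquad e_2 \;=\; b_0+\overline{a_0}\,e_3 ,
\]
for constants $a_0,b_0$ depending only on the family (the conjugate pairing of the coefficients being forced by $\overline{e_1}=e_2/e_3$). This is equivalent to the Gallatly-type statement --- Gallatly's bicentric result \cite{gallatly1914-geometry} and \cref{obs:bic-combo}, here for the Brocard family --- that the Simson lines of a fixed $F$ concur over the porism. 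Granting the affine parametrization, the displayed directrix equation forces $W=W(p)=a_0-b_0\,\overline p$ (using $1/p=\overline p$); in particular the common point exists for every $F$.

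It then follows at once that, as $F=p$ runs over the circumcircle, $W(p)=a_0-b_0\overline p$ runs over the circle of center $a_0$ and radius $|b_0|$ --- the asserted circle. To place its center on $X_3X_{39}$, use that the Brocard porism is invariant under reflection in the line $X_3X_{39}$ joining the centers of its two defining conics (the circumcircle is symmetric in every line through $X_3$, and $X_3X_{39}$ --- the Brocard axis --- is a principal axis of the Brocard inellipse). Since a reflection carries inparabolas to inparabolas and focus to focus, the map $F\mapsto W(F)$ commutes with this reflection, so the $W$-locus is symmetric in $X_3X_{39}$; being a circle, its center $a_0$ therefore lies on $X_3X_{39}$. (Equivalently, $a_0$ is the center of the circular locus of the orthocenter $X_4=e_1$ over the porism, which is symmetric in $X_3X_{39}$ for the same reason.)

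The crux --- and the step I expect to be the real obstacle --- is the affine parametrization $e_1=a_0+\overline{b_0}e_3$, $e_2=b_0+\overline{a_0}e_3$, i.e.\ that the curve $T\mapsto(e_1,e_2,e_3)\in\mathbb{C}^{3}$ traced by the Brocard porism is a complex line; this is exactly where the structure of the porism beyond ``inscribed in a circle'' must enter. I see two natural routes: (i) substitute the explicit parametrization of the Brocard family with prescribed circumcircle and Brocard angle from \cite{bradley2007-brocard,reznik2020-similarityII} and verify the two identities directly; or (ii) argue synthetically that $e_1-\overline{b_0}e_3$ and $e_2-\overline{a_0}e_3$ are conserved over the porism, reading off $a_0,b_0$ from the stationary data (the two Brocard points and the Brocard angle). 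Once that lemma is in hand everything above is routine; moreover, evaluating $a_0,b_0$ in terms of the family would simultaneously pin down the position of the center along $X_3X_{39}$ and the radius $|b_0|$.
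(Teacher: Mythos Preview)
The paper does not prove this statement: it is listed as an \emph{Observation}, supported only by numerical evidence, so there is no ``paper's own proof'' to compare against. Your write-up is therefore not a reconstruction but an actual proof, and it is essentially correct---more complete, in fact, than you give yourself credit for.

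The step you flag as the ``crux'' and leave open---the affine parametrization $e_1=a_0+\overline{b_0}\,e_3$, $e_2=b_0+\overline{a_0}\,e_3$---is precisely the relation that Zaslavsky supplies in the paper's proof of \cref{conj:ip-circum-vtx}: for \emph{any} circle-inscribed Poncelet family with inconic foci $f_1,f_2$ one has
\[
e_1=(f_1+f_2)+\overline{f_1f_2}\,e_3,\qquad e_2=f_1f_2+\overline{(f_1+f_2)}\,e_3,
\]
so that $a_0=f_1+f_2$ and $b_0=f_1f_2$. With this in hand your derivation $W(p)=a_0-b_0\overline{p}$ is complete, the $W$-locus is the circle of center $a_0=f_1+f_2$ and radius $|b_0|=|f_1f_2|$, and for the Brocard porism the foci $f_1,f_2$ are the two Brocard points, whose midpoint is $X_{39}$. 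Hence $a_0=2X_{39}$ in your normalization $X_3=0$, which lies on the line $X_3X_{39}$ directly---the symmetry argument you give is correct but unnecessary. As a bonus, the same computation (with $a_0,b_0$ read off from the foci of the relevant caustic) simultaneously disposes of the analogous observations for the inellipse, bicentric, and MacBeath families.
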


\begin{observation}
Over all $F$ on the circumcircle, the locus of the center $O$ of the (circular) locus of $V$ is an ellipse whose minor axis coincides with that of the Brocard inellipse, centered at the midpoint of $X_3 X_{39}$.
\end{observation}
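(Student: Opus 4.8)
The plan is to pass to complex coordinates, taking the circumcircle to be the unit circle centred at $X_3=0$ and the Brocard axis $X_3X_{39}$ to be the real axis (so $X_{39}\in\R$). This normalisation is harmless and carries a bonus: the two Brocard points are complex conjugates (their midpoint $X_{39}$ is real and the chord $\Omega_1\Omega_2$ of the Brocard circle is perpendicular to the Brocard axis), so conjugation $z\mapsto\bar z$ fixes the circumcircle, swaps $\Omega_1,\Omega_2$, hence preserves the Brocard inellipse, and therefore permutes the Brocard porism. Write a Poncelet triangle as $(a,b,c)$ on the unit circle, with $s_1=a+b+c$, $s_2=ab+bc+ca$, $s_3=abc$, so that $\bar s_1=s_2/s_3$, $|s_3|=1$, and the orthocentre equals $s_1$. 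Fix the focus $F=p$, $|p|=1$. The classical input I would use is the description of the Simson line $\S$ of $p$ with respect to $(a,b,c)$: it passes through $N=\tfrac12(p+s_1)$ (the midpoint of $p$ and the orthocentre) and has direction $d$ with $d^{2}$ a positive real multiple of $s_3/p$. Since the vertex $V$ is the foot of the perpendicular from $p$ to $\S$, and, by \cref{obs:bic-combo} (valid here for the Brocard family), all these Simson lines pass through a common point $U=U(p)$ that is antipodal to $p$ on the circular locus of $V$, the sought centre is $O(p)=\tfrac12\bigl(p+U(p)\bigr)$. So the whole problem reduces to finding how $U$ depends on $p$.

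For the second step I would read $U$ off the concurrency. The condition ``$U$ lies on the Simson line of $p$ for $(a,b,c)$'' is $(U-N)/d\in\R$; normalising $|d|=1$ so that $d^{2}=s_3/p$ and $\bar d=1/d$, this becomes $U-N=(s_3/p)(\bar U-\bar N)$, i.e.\ after clearing denominators and using $\bar N=\tfrac12(1/p+s_2/s_3)$,
\[
 p\,s_1-s_2+s_3\Bigl(2\bar U-\tfrac1p\Bigr)=2pU-p^{2},
\]
an identity that must hold for every member of the family. Along the porism $s_3$ is non-constant (if it were constant, the Simson directions would be parallel for fixed $p$ and $V$ could not sweep a circle), so subtracting this identity for two members $t_1,t_2$ cancels the holomorphic $U$-term and solves for $\bar U$; conjugating and using $\bar p=1/p$ then gives
\[
 U(p)=\tfrac12 p+\tfrac12 B+\tfrac1{2p}A,\qquad O(p)=\tfrac12\bigl(p+U(p)\bigr)=\tfrac14 B+\tfrac34 p+\tfrac14 A\,\bar p,
\]
with $A,B$ constants assembled from $s_1,s_2,s_3$ at $t_1,t_2$. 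Writing $p=e^{i\phi}$, the $O$-locus is $\tfrac14 B+\tfrac{3+A}{4}\cos\phi+i\,\tfrac{3-A}{4}\sin\phi$; once $A,B$ are shown real this is a genuine ellipse, centred at $\tfrac14 B$, with axes along the coordinate axes.

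The third step is the identification. Reality of $A$ and $B$ follows from the symmetry fixed in Step 1: the whole construction is equivariant under $z\mapsto\bar z$, so $O(\bar p)=\overline{O(p)}$, and comparing this with the displayed formula for $O(p)$ forces $A,B\in\R$. It then remains to (i) verify $B=2X_{39}$, so that the centre $\tfrac14 B=\tfrac12 X_{39}$ is the midpoint of $X_3X_{39}$, and (ii) verify $A<0$, so that the real-axis semi-axis $\tfrac{|3+A|}{4}$ is the smaller one — i.e.\ the minor axis of the $O$-ellipse lies along the Brocard axis, which is exactly the minor-axis line of the Brocard inellipse (whose focal axis, the line $\Omega_1\Omega_2$, is perpendicular to the Brocard axis). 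I expect (i)--(ii) to be the main obstacle: they require an explicit parametrisation of the Brocard porism — for instance the one underlying \cite{bradley2007-brocard,reznik2020-similarityII}, giving $s_1(t),s_2(t),s_3(t)$ in closed form — after which one must recognise $B$ as $2X_{39}$ and pin down the sign of $A$. Everything upstream of that is formal manipulation.
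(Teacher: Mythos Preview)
The paper offers no proof of this statement; it is listed as an experimental \emph{Observation}, so there is nothing to compare against. Your proposal is therefore not a reconstruction but a genuine proof attempt, and the strategy is sound. A few remarks on where it can be tightened.

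Your route through the concurrency point $U$ works, but it relies on Observation~\ref{obs:bic-combo} (that $F$ lies on the $V$-circle and $U$ is the antipode), which is itself unproved in the paper. A cleaner path is to go straight through the Zaslavsky computation already present in the proof of Theorem~\ref{conj:ip-circum-vtx}: for $F=p$ one finds (after correcting a typo in the paper's displayed formula) that
\[
V \;=\; \tfrac{1}{4}\bigl(3p + s_1 - s_2/p + s_3/p^{2}\bigr),
\]
and substituting the Poncelet relations $s_1=(f_1+f_2)+\overline{f_1f_2}\,s_3$, $s_2=f_1f_2+(\overline{f_1+f_2})\,s_3$ gives directly
\[
O(p)\;=\;\tfrac{1}{4}(f_1+f_2)+\tfrac{3}{4}\,p-\tfrac{1}{4}\,f_1f_2\,\bar p,
\]
so in your notation $B=f_1+f_2$ and $A=-f_1f_2$ explicitly. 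Your subtraction-of-two-members trick recovers the same constants, but it hides this identification.

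More importantly, the items you flag as ``the main obstacle'' are in fact immediate and require no explicit parametrisation of the porism. Since the Brocard points are swapped by complex conjugation in your normalisation, $f_2=\overline{f_1}$; hence $B=2\,\mathrm{Re}(f_1)$ is real and equals $2X_{39}$ (the midpoint of $\Omega_1\Omega_2$ is $X_{39}$), giving the centre $B/4=\tfrac12 X_{39}$, the midpoint of $X_3X_{39}$. Likewise $A=-f_1\overline{f_1}=-|f_1|^{2}<0$, so the real-axis semi-axis $\tfrac{|3+A|}{4}$ is the smaller one. Both of your points (i) and (ii) thus drop out for free once $A,B$ are recognised as $-f_1f_2$ and $f_1+f_2$; the ``explicit parametrisation'' you anticipate needing is unnecessary.
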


\begin{observation}
Over all $F$, the locus of $U$ common to all Simson lines is an ellipse axis-aligned and concentric with the Brocard inellipse, to which it is tangent internally at both co-vertices.
\end{observation}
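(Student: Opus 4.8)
The plan is to pass to complex coordinates in which the Simson line has a transparent description, reduce the statement to a single computation over an explicit parametrization of the Brocard porism, and then read off the ellipse. Take the circumcircle to be the unit circle, so that $X_3$ is the origin, and rotate so that the Brocard axis $X_3X_{39}$ (which carries the center $X_{39}$ of the Brocard inellipse, the caustic here) is the real axis; the whole configuration is then symmetric under conjugation $z\mapsto\bar z$, since the Brocard inellipse has its foci — the two stationary Brocard points — on that axis. For a triangle $abc$ inscribed in the unit circle one has $X_4=a+b+c=:e_1$, and for $p$ on the unit circle the feet of the perpendiculars from $p$ to the three sidelines are $\tfrac12\!\big(b+c+p-\tfrac{bc}{p}\big)$ and its cyclic images; these lie on the Simson line $\S$ of $p$, which passes through the midpoint $\tfrac12(p+e_1)$ of $p$ and $X_4$ and has direction $\tfrac12\arg(e_3\bar p)\pmod\pi$, where $e_3=abc$. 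Since the pedal curve of a parabola with respect to its focus is the tangent at its vertex, $\S$ is exactly the vertex tangent of the inparabola $\P$ with focus $p$, recovering the relations used throughout this section. With $p$ fixed, the Simson line over the Poncelet family is therefore the line $\ell(s)=\{\,z:\operatorname{Im}[(2z-p-e_1(s))\sqrt{p/e_3(s)}]=0\,\}$, depending on the Poncelet parameter $s$ only through $e_1(s)$ and $e_3(s)$.

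By the Brocard instance of \cref{obs:bic-combo} the lines $\ell(s)$ share a common point $U=U(p)$, antipodal to $p$ on the circular locus of the vertex $V$. I would compute $U(p)$ by substituting the classical parametrization of the Brocard porism — triangles inscribed in the unit circle and tangent to the Brocard inellipse, equivalently of fixed Brocard angle $\omega$ — which makes $e_1(s),e_2(s),e_3(s)$ explicit functions of $s$ and $\omega$, and then solving $U\in\ell(s_1)\cap\ell(s_2)$ for two convenient parameter values (the choice being immaterial, since concurrency is already known). The outcome I expect is
\[
U(p)=\mu\,p+\nu\,\bar p+\kappa,
\]
with $\mu,\nu,\kappa$ explicit in $\omega$ (and, as a degenerate check, $\mu\to\tfrac12$, $\nu,\kappa\to 0$ when $\omega\to30^\circ$, where $U(p)=p/2$ and the caustic becomes the radius-$\tfrac12$ circle). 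The reflection symmetry across the real axis sends the family to itself and $p$ to $\bar p$, forcing $U(\bar p)=\overline{U(p)}$, hence $\mu,\nu,\kappa\in\R$.

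Granting this, the rest is bookkeeping. Writing $p=e^{i\theta}$ gives $U=\kappa+(\mu+\nu)\cos\theta+i(\mu-\nu)\sin\theta$, so the locus $\E_U$ of $U$ is an ellipse with principal axes along the coordinate axes — automatically axis-aligned with the Brocard inellipse. For concentricity, use $U=2O(p)-p$, where $O(p)$ is the center of the circular locus of $V$; real-affinity of $U(p)$ is the same as real-affinity of $O(p)$, and then the center of $\E_U$ is $2c_O-X_3$, where $c_O$ is the center of the (already established) elliptic locus of $O$; since that center is the midpoint of $X_3X_{39}$, we get $\kappa=2\cdot\tfrac12(X_3+X_{39})-X_3=X_{39}$, so $\E_U$ is concentric with the Brocard inellipse. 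It remains to match semi-axes: I would verify, as an identity in $\omega$, that the imaginary-direction semi-axis $|\mu-\nu|$ of $\E_U$ equals the minor semi-axis of the Brocard inellipse (this holds in the $\omega\to30^\circ$ limit as $\tfrac12=\tfrac12$), while the real-direction semi-axes differ. Two concentric, axis-aligned ellipses with equal minor and unequal major semi-axes meet exactly at the two common co-vertices $\kappa\pm i|\mu-\nu|$, with the same (vertical) tangent there; that is precisely the asserted tangency at both co-vertices, the one-sided (``internal'') inclusion being dictated by which major semi-axis is the larger.

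The main obstacle is the middle step. Unlike the sum of the vertex arguments, the Simson direction $\tfrac12\arg(e_3(s)\bar p)$ genuinely varies along the family, so the concurrency of the $\ell(s)$ — and hence the existence of $U(p)$ in the clean affine form above — is a real feature of the Brocard dynamics that the computation must exhibit; obtaining a manageable closed form for $e_1(s),e_3(s)$, extracting $U(p)$, and then pinning down the two constants ($\kappa=X_{39}$ and the minor-semi-axis match) is where the work lies. One is tempted to bypass triangle coordinates by combining $U=2O(p)-p$ with the already-assumed fact that $p\mapsto O(p)$ traces an ellipse, but that shortcut is only valid once one knows $O(p)$ is the affine parametrization of that ellipse, i.e.\ that $O(p)$ is real-affine in $p$ — which is essentially the same computation. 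The same scheme should, mutatis mutandis, cover the analogous $U$-locus statements for the inellipse, bicentric and MacBeath families stated earlier.
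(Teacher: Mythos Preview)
The paper supplies no proof for this statement: it is listed as an \emph{Observation}, and the introduction is explicit that such observations are ``unproven though supported by solid numerical evidence (proofs are welcome).'' There is therefore no paper proof to compare your attempt against.

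As a standalone attempt, what you have written is a plan rather than a proof. The complex-coordinate setup is sound and matches the one the paper does use for the nearby result it actually proves (Zaslavsky's argument for \cref{conj:ip-circum-vtx}), and the symmetry reduction forcing $\mu,\nu,\kappa\in\R$ is correct. But every load-bearing step is deferred: you \emph{expect} $U(p)=\mu p+\nu\bar p+\kappa$, you \emph{would} substitute an explicit Brocard parametrization to extract the constants, and you \emph{would} then check the co-vertex identity $|\mu-\nu|=b_{\mathrm{Broc}}$. None of this is carried out. Your concentricity argument also leans on another unproven Observation of the paper (that the $O$-locus is an ellipse centred at the midpoint of $X_3X_{39}$), so even modulo the missing computations the argument is not self-contained.

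A more direct route is available and avoids parametrizing the Brocard triangles altogether. Zaslavsky's formula $V=\tfrac12(1+k-\bar k\,e_3)$ with $k=f_1+f_2-f_1f_2$ (for $F=1$) already describes the $V$-circle for \emph{any} circle-inscribed Poncelet family; combining this with the Simson direction $\sqrt{e_3}$, the condition that a point lie on every Simson line becomes a pair of linear equations in its real and imaginary parts whose unique solution depends only on $k$, hence only on the fixed foci $f_1,f_2$ of the caustic. Rotating to general $F=p$ then yields $U(p)$ as an explicit real-affine function of $(p,\bar p)$ with coefficients built from $f_1,f_2$ alone --- exactly the form you conjectured, but obtained without ever touching $e_1(s),e_3(s)$ for the Brocard dynamics. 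From there the shape and axis-alignment of the $U$-locus are immediate, and the remaining claims (centre at $X_{39}$, co-vertex tangency) reduce to identities in the two Brocard points that must be checked directly; you should not expect those identities to fall out of limiting cases or borrowed observations.
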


\begin{figure}
    \centering
    \includegraphics[trim=30 50 50 30,clip,width=\textwidth]{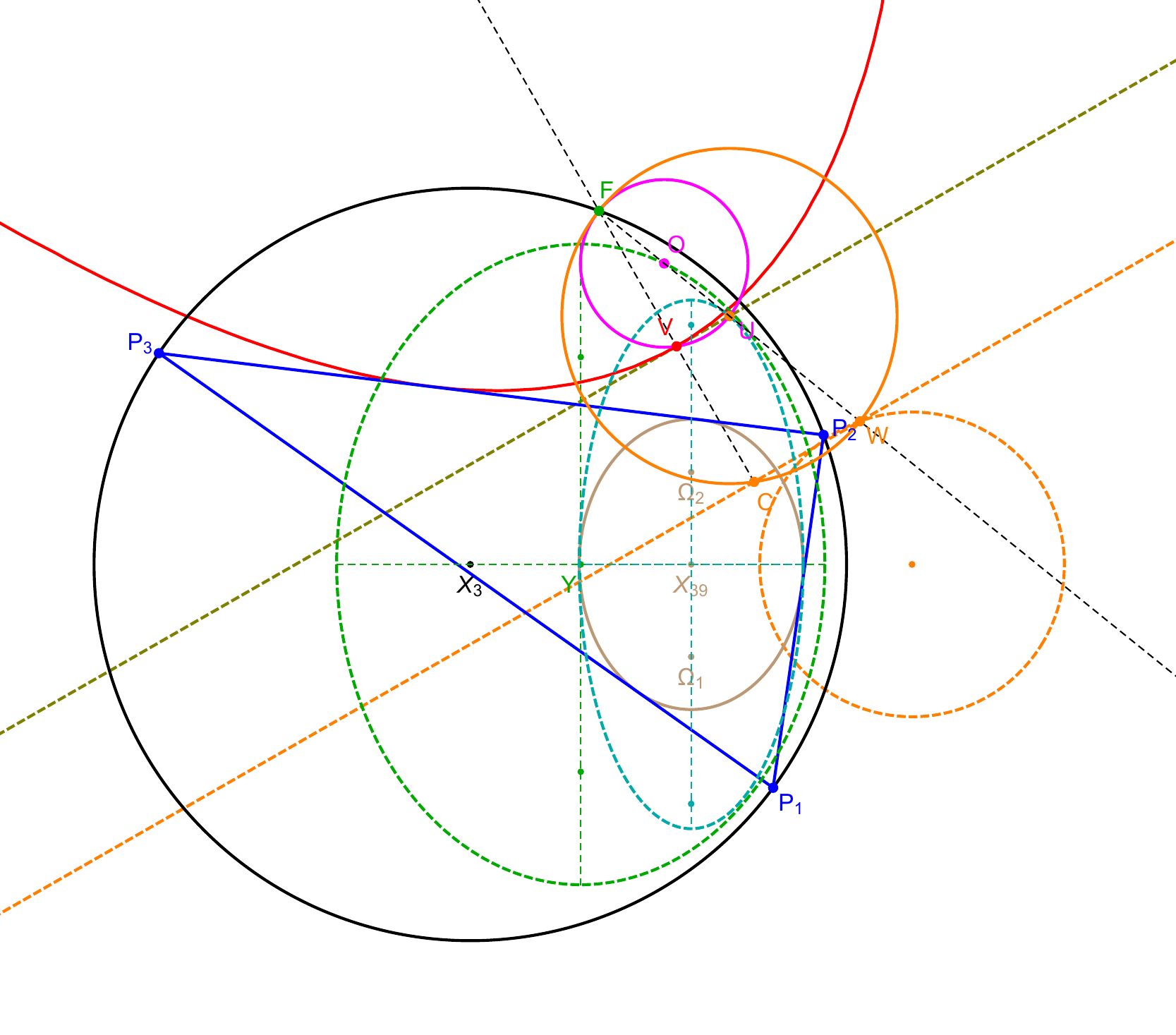}
    \caption{Over the Brocard family, the locus of $V$ and $U$ are again circles (solid pink and orange, respectively). Over all $F$, (i) the locus of the fixed point $W$ (envelope of the directrix) is a circle with center on the minor axis of the caustic $\E'$ (known as the Brocard inellipse \cite{mw}); (ii) the locus of $O$ is an ellipse (dashed green) whose minor axis coincides with that of $\E'$ and whose center is the midpoint $Y$ of $X_3$ and $X_{39}$;  (iii) the locus of $U$ is an ellipse axis-aligned and concentric with $\E'$, and tangent to the latter at both co-vertices.}
    \label{fig:ip-broc-locus}
\end{figure}

Referring to \cref{fig:ip-broc-polar}:

\begin{observation}
Over the Brocard family, the locus of the Brianchon point $\Pi$ of inparabolas with fixed focus $F$ on the circumcircle is an circle. Over all $F$, the locus of the center of this circle is a conic whose major axis is along the $X_3 X_{39}$ line.
\end{observation}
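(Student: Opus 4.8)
The plan is to carry everything into the complex plane with the circumcircle normalized to the unit circle, and to lean on the exceptional structure of the Brocard porism. First I would record a closed form for the Brianchon point: the perspector of an inscribed parabola is the isotomic conjugate of the parabola's point at infinity, and that point at infinity is perpendicular to the directrix, hence to the Simson line of the focus $F$ — equivalently it is the isogonal conjugate of the antipode $-F$ of $F$ on the circumcircle. Since the Simson line of $F$ has direction $\sqrt{abc/F}$ (vertices $a,b,c$ on the unit circle), the point at infinity has barycentrics $\big((b-c)(F-a):(c-a)(F-b):(a-b)(F-c)\big)$, whence
\[
\Pi=\frac{\dfrac{a}{(b-c)(F-a)}+\dfrac{b}{(c-a)(F-b)}+\dfrac{c}{(a-b)(F-c)}}{\dfrac{1}{(b-c)(F-a)}+\dfrac{1}{(c-a)(F-b)}+\dfrac{1}{(a-b)(F-c)}}.
\]
(The normalization checks out on the equilateral, where it returns $\Pi=F$.)

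Next I would parametrize the Brocard porism. With circumradius $1$, the ``advance one vertex'' Poncelet step of the porism is the M\"obius map $M(z)=e^{2i\omega}(z-\Omega)/(\bar\Omega z-1)$, with $\Omega$ the fixed first Brocard point and $\omega$ the fixed Brocard angle; the closure condition $|\Omega|^{2}=1-4\sin^{2}\omega$ is precisely the condition that $M$ have order $3$. Being a finite-order automorphism of the disk, $M$ is conjugated by a disk automorphism $h$ to $z\mapsto e^{2\pi i/3}z$, so the running triangle has vertices $a=h^{-1}(\zeta)$, $b=h^{-1}(e^{2\pi i/3}\zeta)$, $c=h^{-1}(e^{4\pi i/3}\zeta)$, with $|\zeta|=1$ the Poncelet parameter and $h^{-1}(z)=(pz+q)/(rz+s)$ a disk automorphism. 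Substituting this into the formula for $\Pi$, writing $G=Fr-p$, $H=Fs-q$, and using $\prod_{k}(x\,e^{2\pi ik/3}+y)=x^{3}+y^{3}$ together with a partial-fraction expansion over the three cube roots, the three cyclic sums collapse and one is left with
\[
\Pi \;=\; F-\frac{rs}{HG}\cdot\frac{G^{3}w+H^{3}}{r^{3}w+s^{3}},\qquad w:=\zeta^{3}.
\]
So $w\mapsto\Pi$ is a M\"obius map whose pole $w=-s^{3}/r^{3}$ satisfies $|s^{3}/r^{3}|>1$; as $w=\zeta^{3}$ traces the unit circle (three times, as the porism goes around once) $\Pi$ traces a genuine circle, which proves the first assertion.

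For the center locus I would use that the M\"obius map $w\mapsto\Pi$ has the \emph{$F$-independent} denominator $r^{3}w+s^{3}$, so the center of the image circle, $O(F)=\big(\beta_{F}\overline{s^{3}}-\alpha_{F}\overline{r^{3}}\big)/(|s|^{6}-|r|^{6})$ with $\alpha_{F}=Fr^{3}-rsG^{2}/H$ and $\beta_{F}=Fs^{3}-rsH^{2}/G$, is an \emph{affine} function of $(\alpha_{F},\beta_{F})$. A direct computation gives $|G|=|H|$ on $|F|=1$, and one factorization then puts $O$ in the shape
\[
O \;=\; C_{0}\cdot\frac{(1+\tau)g^{2}+z_{0}g}{\bar z_{0}g^{2}+(1+\tau)g+z_{0}},\qquad g:=Fe^{-i\psi},\quad \tau:=|z_{0}|^{2},
\]
a degree-$(2,2)$ rational function of the unimodular variable $g$ (here $h^{-1}(z)=e^{i\psi}(z-z_{0})/(1-\bar z_{0}z)$ and $C_{0}$ is a constant). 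On $|g|=1$, with $D(g)$ the common denominator, one has $D(g)\,O=C_{0}g\big((1+\tau)g+z_{0}\big)$, quadratic in $g$, while $D(g)\,\overline{O}=\overline{C_{0}}\big(\bar z_{0}g+(1+\tau)\big)$, \emph{linear} in $g$. These are two polynomial equations in $g$ with coefficients affine in $O$ and in $\overline{O}$ respectively; the actual parameter $g$ is a common root, so their resultant vanishes, and that resultant is a polynomial of degree $2$ in $(O,\overline{O})$. Hence the center lies on a conic — necessarily a bounded ellipse, since $\Pi$ never leaves the Steiner circumellipse.

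Lastly, the axis. Reflection of the plane in the line $X_{3}X_{39}$ preserves the circumcircle and preserves the Brocard inellipse (its center $X_{39}$ lies on the line and its two foci, the Brocard points, are swapped), hence carries the Brocard porism to itself while sending $F$ to its mirror image; it therefore maps the center-locus ellipse to itself, so $X_{3}X_{39}$ is one of its two principal axes. To identify it as the major one I would read the semi-axes off the explicit ellipse: the $X_{3}X_{39}$ semi-axis is half the distance between the two real values of $O(g)$ — the roots of $C_{0}(1+\tau)g^{2}+\big(C_{0}z_{0}-\overline{C_{0}z_{0}}\big)g-\overline{C_{0}}(1+\tau)=0$, whose product is unimodular so that both lie on $|g|=1$ — whereas the transverse semi-axis is the extremal value of $|\operatorname{Im}O|$; one checks the former is larger. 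The main obstacle is exactly this last inequality: the three constructions above and the symmetry half of the axis argument are structural, and the collapse of the cyclic sums and the factorization of $O$ are routine once arranged, but cleanly separating the two principal semi-axes seems to demand the fully explicit ellipse, whose coefficients are unwieldy in $z_{0}$ (equivalently, in $\omega$ and in the location of $\Omega$).
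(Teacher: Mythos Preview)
The paper contains no proof of this statement: it is listed purely as an experimental observation, supported only by the figure and by numerical evidence (the abstract says explicitly that ``most phenomena are unproven\ldots proofs are welcome''). So there is nothing in the paper to compare your argument against; any correct argument here would be new.

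Your overall plan is reasonable, and the formula you display for $\Pi$ is correct. One small slip: the infinite point $\big((b-c)(F-a):(c-a)(F-b):(a-b)(F-c)\big)$ is the isogonal conjugate of $F$ itself, not of its antipode $-F$ (check the Kiepert parabola: focus $X_{110}$, axis direction $X_{523}=\mathrm{isog}(X_{110})$). This does not affect your formula. The reduction of $\Pi$ to a M\"obius function of $w=\zeta^{3}$ is the heart of the first claim; you assert that ``the three cyclic sums collapse'' but show no line of it, and this is exactly the step a reader cannot reconstruct without effort.

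There is, however, a genuine gap in the conic argument for the center locus. You say that $D(g)\,O$ is quadratic in $g$ while $D(g)\,\overline O$ is linear, and conclude that the resultant has degree~$2$ in $(O,\overline O)$. But both are \emph{equations} of degree~$2$ in $g$, because $D(g)$ is itself quadratic: rewriting your second identity gives
\[
\bar z_{0}\,\overline O\,g^{2}+\big((1{+}\tau)\overline O-\overline{C_{0}}\,\bar z_{0}\big)g+\big(z_{0}\overline O-\overline{C_{0}}(1{+}\tau)\big)=0,
\]
which is quadratic in $g$ with coefficients affine in $\overline O$. The Sylvester resultant of two quadratics has bidegree $(2,2)$ in their coefficients, hence total degree~$4$ in $(O,\overline O)$---a quartic real curve, not a conic. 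Worse, on $|g|=1$ your two identities are complex conjugates of one another (since $g^{2}\,\overline{D(1/g)}=D(g)$), so they do not give two independent algebraic constraints; some further structure (e.g.\ a factorisation of the quartic, or a proof that your degree-$(2,2)$ map $g\mapsto O(g)$ actually factors through a M\"obius map of the unit circle) is needed to force degree~$2$. As written, this step does not go through, and the major-axis inequality---which you already flag as the obstacle---sits on top of it.
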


\begin{figure}
    \centering
    \includegraphics[trim=200 10 0 0,clip,width=.8\textwidth]{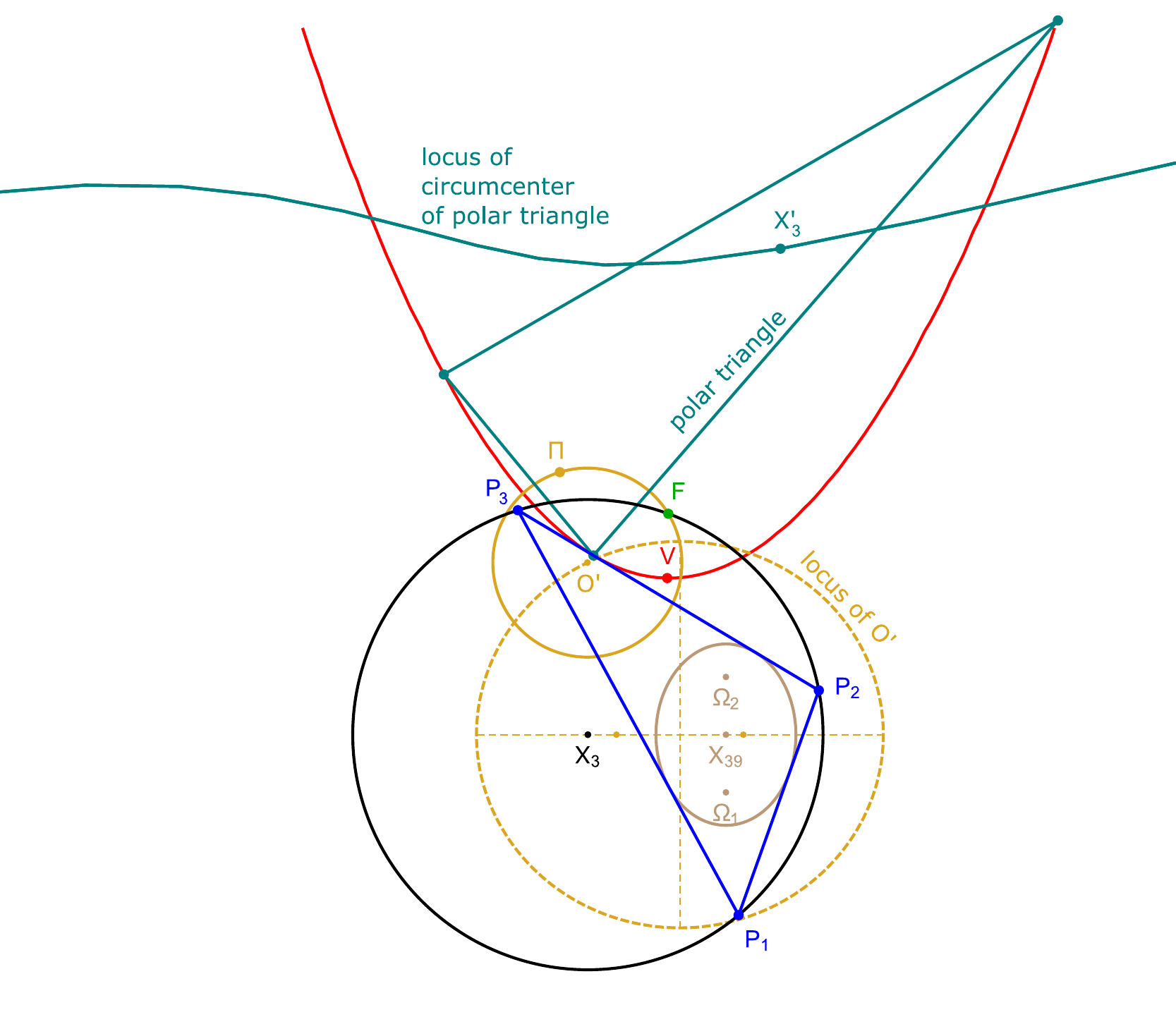}
    \caption{Over the Brocard family, the locus of the circumcenter $X_3'$ of the polar triangle (teal) with respect to inparabolas (red) with fixed focus $F$ is a sinuous curve (teal). The locus of the Brianchon point $\Pi$ is a circle (gold). Interestingly, over all $F$, the locus of the center $O'$ of the locus of $\Pi$ is a conic with major axis along the $X_3 X_{39}$ line.}
    \label{fig:ip-broc-polar}
\end{figure}

\subsection{General circle-inscribed Poncelet}

Consider a circle-inscribed Poncelet triangle family where the inner conic is some generic nested ellipse. Let $F$ be a fixed point on the circumcircle. As mentioned above, the Simson line with respect to $F$ is tangent to the inparabola with focus on $F$ at its vertex $V$. So $V$ can be regarded as the perpendicular projection of $F$ onto the Simson line \cite{gheorghe2021-private}. 

%\textcolor{red}{to do: general case in cicumcircle: non radial inellipse, conjectures?}

Referring to \cref{fig:ip-ctr-locus}:

\begin{figure}
    \centering
    \includegraphics[trim=0 50 0 0,clip,width=.8\textwidth]{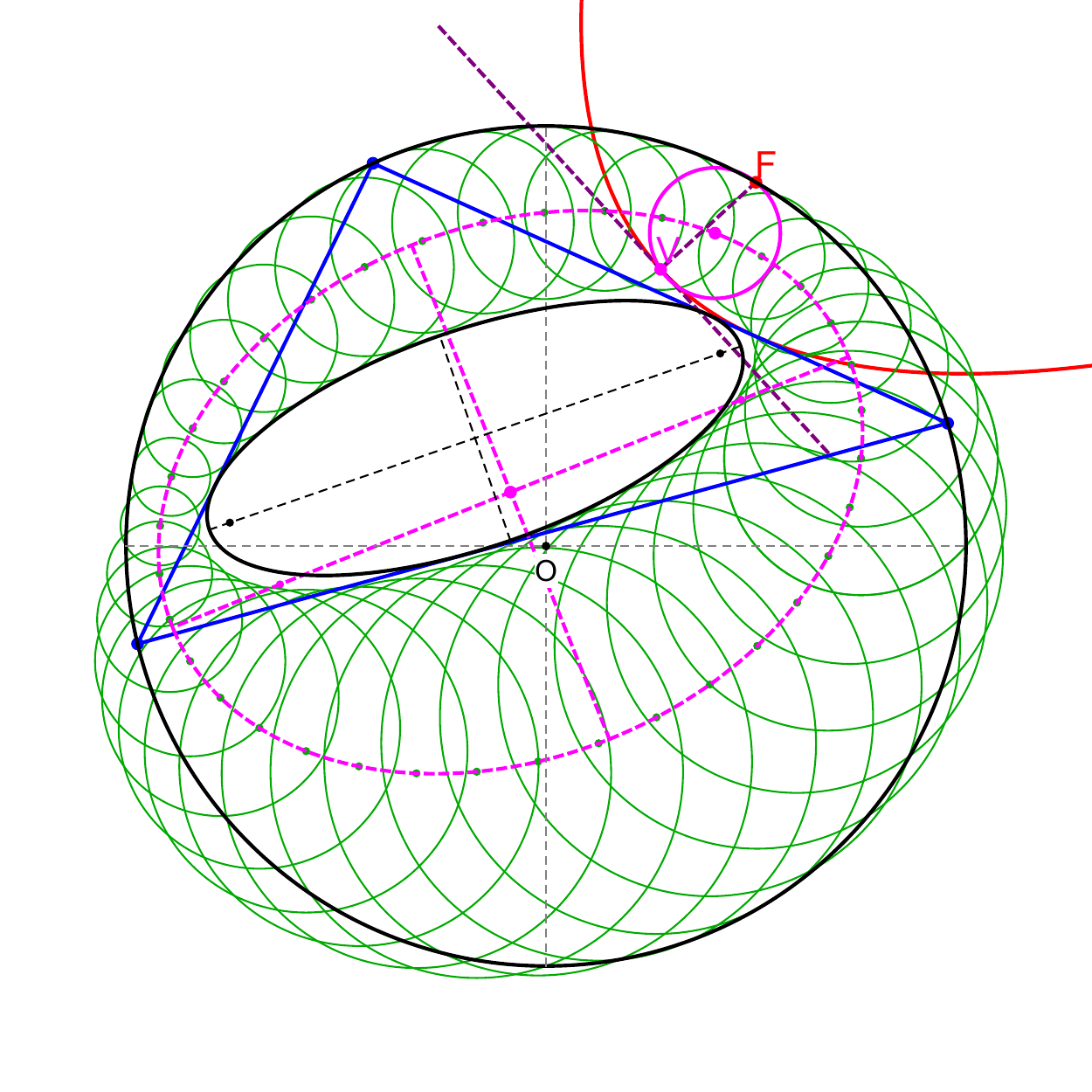}
    \caption{Consider a circle-inscribed Poncelet triangle family (blue) with a caustic/inconic in general position. The locus of the vertex $V$ of inparabolas with focus at a fixed point $F$ on the circumcircle is still a circle (magenta). Over all $F$, the center of said locus (green) sweeps an ellipse (dashed magenta), neither concentric nor axis-aligned with either Poncelet conics.}
    \label{fig:ip-ctr-locus}
\end{figure}

\begin{theorem}
Over an arbitrary Poncelet triangle family inscribed in a circle, the locus of the perpendicular projection of $F$ onto the Simson line is a circle.
\label{conj:ip-circum-vtx}
\end{theorem}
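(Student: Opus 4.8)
The plan is to pass to the complex model of the circumcircle, write the inparabola vertex $V$ in closed form, and reduce the assertion to an \emph{algebraic identity} valid along the Poncelet family, which can then be settled by a divisor computation on the elliptic curve carrying the family.

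Normalize $\C$ to the unit circle $\{|z|=1\}\subset\mathbb C$, so that a triangle with vertices $a,b,c$ has orthocenter $a+b+c$, and put $F=f$ with $|f|=1$. Two classical facts about the Simson line $\S$ of $F$ — that it contains the midpoint $\tfrac12(f+a+b+c)$ of $F$ and the orthocenter, and that its direction $\delta$ satisfies $\delta^{2}\propto abc/f$ — let one compute the perpendicular projection of $F$ onto $\S$ (which is $V$, as recorded just before the statement) in a few lines:
\[
 V \;=\; f\;-\;\frac{(f-a)(f-b)(f-c)}{4f^{2}} .
\]
The degenerate check $f=a$ returns $V=a$, as it must. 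Since $f$ is fixed, $V$ traces a circle iff $\Psi:=(f-a)(f-b)(f-c)$ does; and because all three vertices lie on $\C$, the product $R:=abc$ stays on the unit circle. It therefore suffices to establish the identity $\Psi=\alpha R+\beta$ over the Poncelet family, with $\alpha,\beta$ constants depending only on $\C,\E',F$: then $\Psi$ runs along the circle $|w-\beta|=|\alpha|$, hence $V$ runs along a circle of radius $|\alpha|/4$ about $f-\beta/(4f^{2})$.

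To prove $\Psi=\alpha R+\beta$ I would invoke the classical elliptic description of a circle-inscribed Poncelet triangle family \cite{dragovic11}: the incidence (``flag'') curve $\X$ is an elliptic curve, the vertex-position map $P:\X\to\mathbb{CP}^{1}$ (to the complexified circle) has degree $2$, and the three vertices of a Poncelet triangle are $P(\sigma+k\tau')$, $k=0,1,2$, where $\tau'$ is a fixed primitive $3$-torsion element (triangle closure $\iff 3\tau'\equiv 0$). Hence $\Psi$ and $R$ are meromorphic functions on $\X$ sharing the \emph{same} polar divisor $\sum_{k=0}^{2}\bigl([v_{0}-k\tau']+[v_{0}'-k\tau']\bigr)$, where $\{v_{0},v_{0}'\}=P^{-1}(\infty)$ consists of two distinct points (the four branch points of $P$ are the intersections $\C\cap\E'$, and $\infty$ is not among them). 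The crux is that the ratio of the polar part of $\Psi$ to that of $R$ is \emph{the same complex number $\alpha$ at all six poles}: at the three poles $v_{0}-k\tau'$ this ratio equals $-\bigl(\tfrac{f}{P(v_{0}+\tau')}-1\bigr)\bigl(\tfrac{f}{P(v_{0}+2\tau')}-1\bigr)$ for every $k$, because the cyclic shift by $\tau'$ (with $3\tau'\equiv 0$) merely permutes the two ``finite'' factors; and at the poles $v_{0}'-k\tau'$ one gets the same expression with $v_{0}'$, which coincides with the first because the hyperelliptic involution $h$ of $P$ swaps $v_{0}$ and $v_{0}'$ and yields $P(v_{0}'+j\tau')=P(v_{0}-j\tau')$. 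Consequently $\Psi-\alpha R$ has no poles on the compact surface $\X$, so it is a constant $\beta$, which is exactly the identity sought. (Running this through the antiholomorphic involution of $\X$ induced by $z\mapsto 1/\bar z$ additionally gives $|\beta|=|\alpha|$, so the circle passes through $F$, in agreement with the figures.)

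I expect the main obstacle to be the bookkeeping in the elliptic model: justifying that the vertices sit precisely at $\sigma+k\tau'$ with $3\tau'\equiv 0$ (there is a notorious factor-of-two subtlety coming from the double cover $P$), and verifying the polar-part matching — in particular that $h$ identifies the two triples of poles in the correct way. One must also dispatch the non-generic configurations (coincidences among the six poles when $v_{0}-v_{0}'\in\langle\tau'\rangle$, the case $F\in\E'$, or $\alpha=0$, when the circle degenerates to the point $F$) by a limiting argument, since ``the locus lies on a circle'' is a closed condition. By contrast the reduction in the first two steps and the degree/Riemann--Roch count are routine; the entire content sits in the identity $\Psi=\alpha R+\beta$, and this is precisely where ``the outer conic is a circle'' (so that $|R|\equiv 1$) and ``$n=3$'' (so that $3\tau'\equiv 0$) are both used.
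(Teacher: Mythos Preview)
Your argument is correct, and the closed form $V=f-\dfrac{(f-a)(f-b)(f-c)}{4f^{2}}$ checks out. The route, however, is genuinely different from the paper's and considerably heavier. The paper (following Zaslavsky) bypasses the elliptic model entirely by invoking the two classical \emph{inconic relations} for a triangle on the unit circle circumscribing a conic with foci $f_1,f_2$:
\[
a+b+c \;=\; f_1+f_2+\overline{f_1 f_2}\,abc,\qquad
ab+bc+ca \;=\; f_1 f_2+\overline{(f_1+f_2)}\,abc.
\]
Expanding $(f-a)(f-b)(f-c)$ and substituting these two lines gives your identity $\Psi=\alpha\,abc+\beta$ \emph{immediately}, with explicit $\alpha,\beta$ in terms of $f,f_1,f_2$; no flag curve, no divisors, no Riemann--Roch, and no special handling of degenerate configurations is needed.

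What your approach buys is conceptual generality: the divisor computation would establish, for any symmetric rational function of $a,b,c$ with the right pole structure, that it is affine in $abc$ along the Poncelet family, without ever naming the inconic foci. What the paper's approach buys is brevity and explicitness: the constants are written down, and the side condition $|\alpha|=|\beta|$ (so that the vertex circle passes through $F$) is visible at once from $\beta=f(f-f_1)(f-f_2)$ and $\alpha=-f^{2}\,\overline{(f-f_1)(f-f_2)}$. Since the inconic relations are exactly the algebraic content of ``the family is Poncelet for a fixed caustic,'' you may want to cite them directly and drop the elliptic machinery; your own formula for $V$ together with those two identities is already a complete proof.
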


The following proof was kindly provided by Alexey Zaslavsky \cite{zaslavsky2021-private}.

\begin{proof} A sketch is the following.
 Identify the circumcircle with the unit circle in the complex plane. Let $f_1$, $f_2$ be the complex numbers corresponding to the foci of the inconic, and set $F=1$. Let $a,b,c$ denote the sidelengths. Then we have $a+b+c=f_1+f_2+\overline{f_1f_2}abc$ and $ab+bc+ca=f_1f_2+(\overline{f_1+f_2})abc$. The projection of $F$ onto $AB$ is $(1+a+b-ab)/2$; that onto $BC$ and $CA$ are obtained cyclically. From this obtain that the projection $V$ of $F$ onto the Simson line is $V=(1+k-\overline{k}abc)/2$, where $k=f_1+f_2-f_1f_2$, i.e., this point moves along a circle.
\end{proof}
%R
\begin{remark}
A systematic use of complex numbers in planar geometry and  in  Poncelet families of triangles  can be found in   \cite{dsr_applet_x12345}.
\end{remark}
%R
\begin{proposition}
The locus of the isogonal conjugate $V'$ of $V$ is a line tangent to the circumcircle at the antipode of $F$.
\end{proposition}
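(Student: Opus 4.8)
The plan is to argue entirely in complex coordinates, normalizing the circumcircle to the unit circle and setting $F=1$; then the antipode of $F$ is $-1$ and its tangent line to the circumcircle is $\{z:z+\bar z=-2\}$, so (writing $V^{*}$ for the isogonal conjugate $V'$ of $V$) the claim is exactly that $V^{*}+\overline{V^{*}}=-2$ for every triangle of the family. Let $\sigma_1,\sigma_2,\sigma_3$ be the elementary symmetric functions of the vertices $a,b,c$; since $|a|=|b|=|c|=1$ one has $\bar\sigma_1=\sigma_2/\sigma_3$, $\bar\sigma_2=\sigma_1/\sigma_3$, $\bar\sigma_3=1/\sigma_3$. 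A short computation of the feet of the perpendiculars from $F$ onto the three sides, followed by the projection of $F$ onto the Simson line through them (the very quantities appearing in the proof of \cref{conj:ip-circum-vtx}, and using that for $F=1$ the Simson line has direction $\sqrt{abc}$), collapses after collecting symmetric functions to the compact form
\[
V=1-\tfrac14(1-a)(1-b)(1-c)=\tfrac14\bigl(3+\sigma_1-\sigma_2+\sigma_3\bigr),\qquad \bar V=1+\frac{1-\sigma_1+\sigma_2-\sigma_3}{4\,\sigma_3}.
\]

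Next I would feed this into the complex formula for isogonal conjugation with respect to a triangle inscribed in the unit circle (derived in \cref{app:conjug}, e.g.\ from the fact that $z$ and $z^{*}$ share a pedal circle centred at the midpoint of $zz^{*}$),
\[
z^{*}=\frac{z-\sigma_1+\sigma_2\,\bar z-\sigma_3\,\bar z^{\,2}}{z\bar z-1}.
\]
Writing $P:=(1-a)(1-b)(1-c)=1-\sigma_1+\sigma_2-\sigma_3$, so that $V=1-P/4$ and $\bar V=1+P/(4\sigma_3)$, one substitutes and finds that numerator and denominator both pick up the overall factor $P/(16\sigma_3)$; cancelling it leaves a ratio of \emph{affine} expressions in $\sigma_1,\sigma_2,\sigma_3$,
\[
V^{*}=\frac{\sigma_1+3\sigma_2+5\sigma_3-1}{\ \sigma_1-\sigma_2-3\sigma_3+3\ }.
\]
Conjugating with $\bar\sigma_1=\sigma_2/\sigma_3$, $\bar\sigma_2=\sigma_1/\sigma_3$, $\bar\sigma_3=1/\sigma_3$ and clearing $\sigma_3$ gives $\overline{V^{*}}=\dfrac{3\sigma_1+\sigma_2-\sigma_3+5}{-(\sigma_1-\sigma_2-3\sigma_3+3)}$, which shares the denominator of $V^{*}$ up to sign, whence
\[
V^{*}+\overline{V^{*}}=\frac{(\sigma_1+3\sigma_2+5\sigma_3-1)-(3\sigma_1+\sigma_2-\sigma_3+5)}{\sigma_1-\sigma_2-3\sigma_3+3}=\frac{-2(\sigma_1-\sigma_2-3\sigma_3+3)}{\sigma_1-\sigma_2-3\sigma_3+3}=-2,
\]
so $V^{*}$ always lies on the tangent to the circumcircle at $-F$.

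It remains to check that over the family $V^{*}$ sweeps the \emph{whole} tangent line. Here the Poncelet closure relations of \cref{conj:ip-circum-vtx} enter for the first time: they express $\sigma_1$ and $\sigma_2$ as affine functions of $\sigma_3$ (with coefficients built from $f_1,f_2$), so the displayed formula exhibits $V^{*}$ as a Möbius function of $\sigma_3$; as the triangle traverses the closed family $\sigma_3=abc$ runs once around the unit circle, and its Möbius image, known to lie on the line $\{z+\bar z=-2\}$, is then exactly that line. The subtlety worth flagging — and what might look like the obstacle — is that one should \emph{not} invoke the closure relations any earlier: the identity $V^{*}+\overline{V^{*}}=-2$ already holds for every triangle inscribed in the given circle, and the genuinely load-bearing step is the cancellation of the common factor $P/(16\sigma_3)$, which is precisely what turns $V^{*}$ into a ratio of affine forms and thereby forces $V^{*}+\overline{V^{*}}$ to be constant.
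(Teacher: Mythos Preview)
Your proof is correct and follows the same complex-coordinate strategy as the paper: normalize the circumcircle to the unit circle with $F=1$ and verify $V'+\overline{V'}=-2$, which is exactly the equation of the tangent at $-1$. The paper's argument is a two-line sketch invoking the implicit relation $V+V'+\overline{VV'}\,abc=a+b+c$ and asserting the conclusion; you instead compute $V$ intrinsically as $\tfrac14(3+\sigma_1-\sigma_2+\sigma_3)$, feed it into the explicit rational formula for isogonal conjugation, and track the cancellation of the common factor $P/(16\sigma_3)$ to a closed form for $V^*$ before checking the identity. The algebra checks (e.g.\ for $a=i,\ b=-1,\ c=-i$ one gets $V=0$, $V^*=-1$). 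Your added surjectivity paragraph---observing that only at this last stage do the Poncelet closure relations enter, turning $V^*$ into a M\"obius function of $\sigma_3=abc$ as it winds once around the unit circle---supplies a point the paper leaves implicit.

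One small quibble: the complex formula $z^*=\dfrac{z-\sigma_1+\sigma_2\bar z-\sigma_3\bar z^{\,2}}{z\bar z-1}$ is standard and equivalent to the relation the paper uses, but \cref{app:conjug} in this paper does not actually derive it; that appendix only records the geometric construction and the barycentric/trilinear description. You should cite an external source or include the one-line derivation from the pair of relations $z+z^*+\sigma_3\bar z\bar z^*=\sigma_1$ and its conjugate.
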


\begin{proof}
Let $V'$ denote the isogonal conjugate of $V$. This satisfies $V+V'+\bar{VV'}abc=a+b+c$, and we can see that $V'+\bar{V'}=-2$.
\end{proof}

Referring to \cref{fig:ip-locus-W}:

\begin{figure}
    \centering
    \includegraphics[width=.8\textwidth]{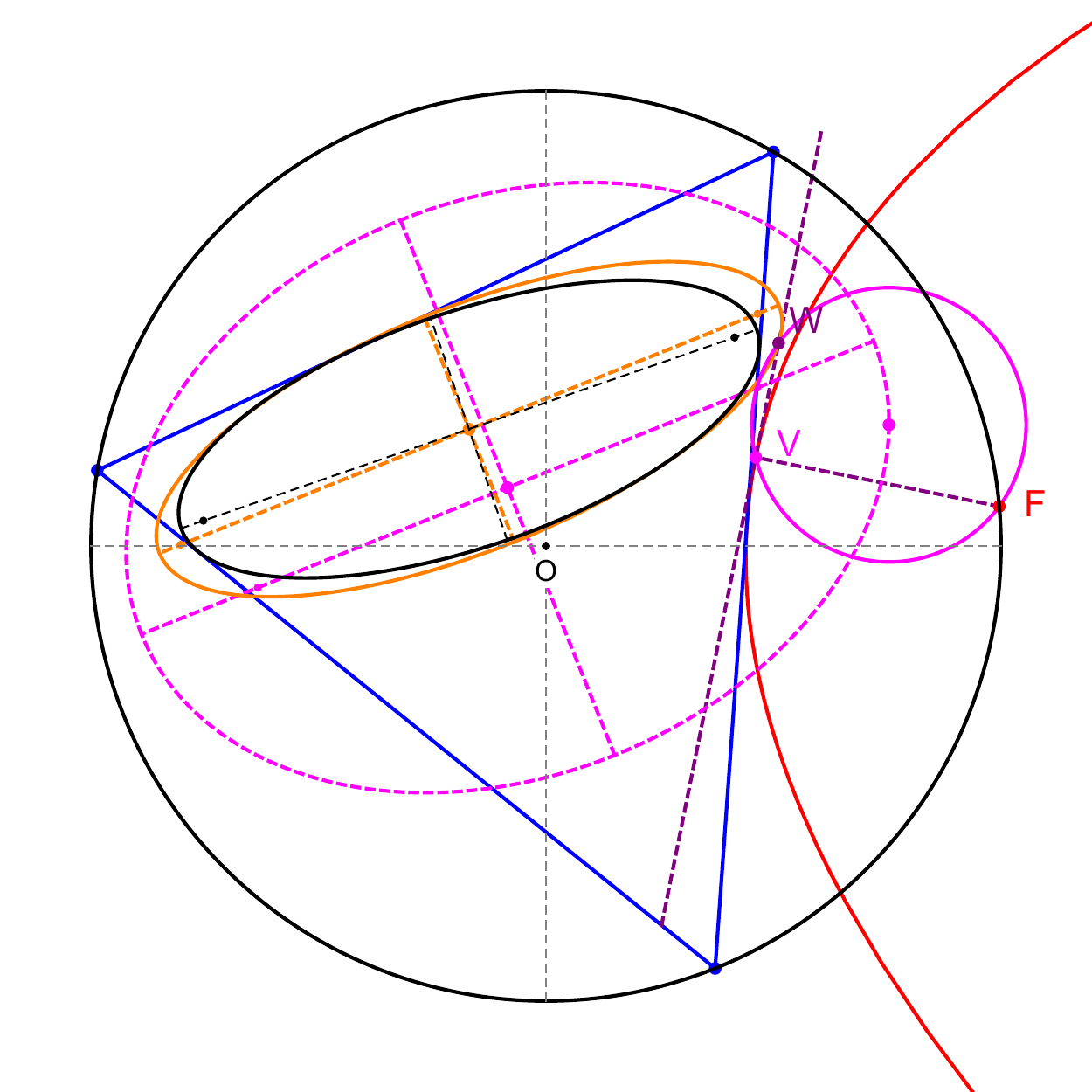}
    \caption{Over circle-inscribed Poncelet with a generic inconic, the envelope of Simson lines (tangents to inparabolas at $V$) is a point $W$ which is the antipode of $F$ on the circular locus of $V$. Over all $F$, $W$ sweeps an ellipse (orange) concentric though not-axis aligned with the inconic.}
    \label{fig:ip-locus-W}
\end{figure}
% troquei conjectura por observacao acatando referee observacao.

\begin{observation}
Over any Poncelet triangle family inscribed in a circle, the envelope of Simson lines (dashed purple) is a point $W$ antipodal to $F$ on the circular locus of $V$. \label{conj:ip-W}
\end{observation}

\begin{conjecture}
Over all $F$, the locus of $W$ is an ellipse concentric with the inconic/caustic. \label{conj:ip-locus-W}
\end{conjecture}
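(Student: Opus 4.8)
The plan is to carry out, with a free focus, the same complex-coordinate computation that proves \cref{conj:ip-circum-vtx}. Identify the circumcircle with the unit circle, write $a,b,c$ for the vertices (so $\overline a=1/a$, etc.), let $f_1,f_2$ be the foci of the caustic, and set $s=f_1+f_2$, $q=f_1f_2$. Membership in the Poncelet family is encoded, exactly as in the proof of \cref{conj:ip-circum-vtx}, by $e_1:=a+b+c=s+\overline q\,e_3$ and $e_2:=ab+bc+ca=q+\overline s\,e_3$, where $e_3:=abc$ has modulus $1$ (given $a,b,c$ on the unit circle, the second relation in fact follows from the first). I would first record two standard facts about the Simson line $\S$ of a point $F=u$, $|u|=1$: it passes through $M=\tfrac12(u+e_1)$, the midpoint of $F$ and the orthocenter $a+b+c$, and its direction, read as a unit complex number, squares to $e_3/u$ (obtained from the feet $\tfrac12(a+b+u-ab/u)$ and its cyclic images of the perpendiculars from $F$). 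Projecting $F=u$ orthogonally onto $\S$ then gives the vertex
\[
V\;=\;\frac{M+u}{2}+\frac{\bigl(\overline u-\overline M\bigr)\,(e_3/u)}{2}\;=\;\frac14\Bigl(3u+e_1-\frac{e_2}{u}+\frac{e_3}{u^{2}}\Bigr).
\]

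The second step is to substitute the two Poncelet relations. Since $e_1,e_2$ are affine-linear in $e_3$, the vertex separates into an $e_3$-free part and a part proportional to $e_3$:
\[
V\;=\;\underbrace{\frac14\Bigl(3u+s-\frac{q}{u}\Bigr)}_{=:\,C_F(u)}\;+\;\frac{e_3}{4}\Bigl(\overline q-\frac{\overline s}{u}+\frac1{u^{2}}\Bigr).
\]
As the Poncelet triangle turns, $e_3$ runs over (an arc of) the unit circle, which both re-proves \cref{conj:ip-circum-vtx} and pins down the center of the circular $V$-locus as $C_F(u)$; a short conjugation identity shows that $|C_F(u)-u|$ equals the radius $\tfrac14\bigl|\overline q-\overline s/u+1/u^{2}\bigr|$, so $F$ lies on this circle and its antipode is well defined. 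By \cref{conj:ip-W} --- which is itself immediate, because $\S$ is the tangent to the inparabola at its vertex and hence perpendicular to the axis $FV$, so by the inscribed-angle theorem it passes through the point of the $V$-circle antipodal to $F$ --- the point $W$ common to all Simson lines over the family is
\[
W(u)\;=\;2\,C_F(u)-u\;=\;\frac12\Bigl(u+s-\frac{q}{u}\Bigr)\;=\;\frac{s}{2}+\frac12\bigl(u-q\,\overline u\bigr),
\]
using $1/u=\overline u$.

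The last step is to let $F=u$ sweep the whole circumcircle. The map $u\mapsto u-q\,\overline u$ is $\R$-linear with determinant $1-|q|^{2}$; since the foci of the caustic lie inside the unit disk we have $|q|=|f_1f_2|<1$, so this map is invertible and sends the unit circle onto a nondegenerate ellipse centered at $0$. Hence the locus of $W$ is an ellipse with center $s/2=(f_1+f_2)/2$, which is exactly the center of the caustic --- the assertion. (Writing $u=e^{i\theta}$ and $q=|q|e^{i\varphi}$ one even reads off the semi-axes $\tfrac12(1\pm|q|)$ and the tilt $\varphi/2$: when one focus of the caustic is the circumcenter, $q=0$ and the ellipse collapses to a circle of radius $\tfrac12$, matching the circular $W$-locus noted for the MacBeath family; for the concentric-inellipse family one has $s=0$, the ellipse is axis-aligned with the caustic, and it in fact coincides with it.)

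I expect the one genuine obstacle to be the first step: getting the direction of $\S$ for a general focus $u$ right, and hence the vertex formula for $V$ --- it is easy to drop a power of $u$ there. After that everything is forced: the Poncelet constraints are linear in $e_3$, so $C_F(u)$ is extracted with no computation, $W(u)$ comes out in the simplest possible form $\tfrac s2+\tfrac12(u-q\overline u)$, and the ellipse is merely the $\R$-linear image of a circle. One detail to treat with care is the claim that $F$ really lies on the circular $V$-locus (so that ``the antipode'' is meaningful); this corresponds to the degenerate member of the family whose inparabola vertex sits at $F$, and it is settled by the conjugation identity mentioned above.
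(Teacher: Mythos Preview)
The paper does not prove this statement: it is recorded there as an open conjecture, with only numerical support, and the authors explicitly invite proofs. So there is no ``paper's own proof'' to compare against.

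Your argument is correct and in fact settles both \cref{conj:ip-W} and \cref{conj:ip-locus-W} at once. It is the natural extension of Zaslavsky's proof of \cref{conj:ip-circum-vtx}: with a free focus $F=u$ on the unit circle you re-derive
\[
V=\tfrac14\bigl(3u+e_1-e_2/u+e_3/u^{2}\bigr),
\]
substitute the Poncelet relations $e_1=s+\bar q\,e_3$, $e_2=q+\bar s\,e_3$ to read off the center $C_F(u)=\tfrac14(3u+s-q/u)$ of the $V$-circle, and check $|C_F(u)-u|=\tfrac14|u^{2}-su+q|=\tfrac14|\bar q-\bar s/u+1/u^{2}|$ so that $F$ lies on that circle. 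The Thales step for \cref{conj:ip-W} --- the Simson line is the vertex tangent, hence perpendicular to $FV$, hence through the antipode of $F$ --- is then legitimate, and yields
\[
W(u)=2C_F(u)-u=\tfrac{s}{2}+\tfrac12\bigl(u-q\,\bar u\bigr).
\]
Since $|q|=|f_1f_2|<1$, the $\R$-linear map $u\mapsto u-q\bar u$ is nondegenerate and carries the unit circle to an ellipse centered at $0$, whence the $W$-locus is an ellipse centered at $s/2=(f_1+f_2)/2$, the center of the caustic. Your side checks also hold up: for the concentric-inellipse family ($s=0$, $a'+b'=1$, $|q|=a'-b'$) the semi-axes $(1\pm|q|)/2$ equal $a',b'$, recovering the paper's remark that the Simson-line pivot sweeps the caustic itself; for the MacBeath family ($q=0$) you get a circle of radius $\tfrac12$ centered at $X_5$, matching the paper's observation for that family. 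In short, your proposal is a complete proof of a result the paper leaves open.
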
  %ensaio de trocar ordem (sugestao referee)
 \section{Inparabolas over Steiner-Inscribed Poncelet}
\label{sec:ip-steiner-inscribed}

A well-known fact is that while the focus to inparabolas lie on the circumcircle, the Brianchon point must lie on the Steiner ellipse \cite[Brianchon point]{mw}. Let $\Pi$ be a fixed point on the outer ellipse of the homothetic Poncelet triangle family.

Referring to \cref{fig:ip-steiner-loci},
let $\Pi$ be a fixed point on the outer (Steiner) ellipse of the ``homothetic'' family. Let $\P$ be an inparabola (red) whose Brianchon point is $\Pi$.
\begin{figure}
    \centering
    \includegraphics[trim=50 30 0 0,clip,width=0.8\textwidth]{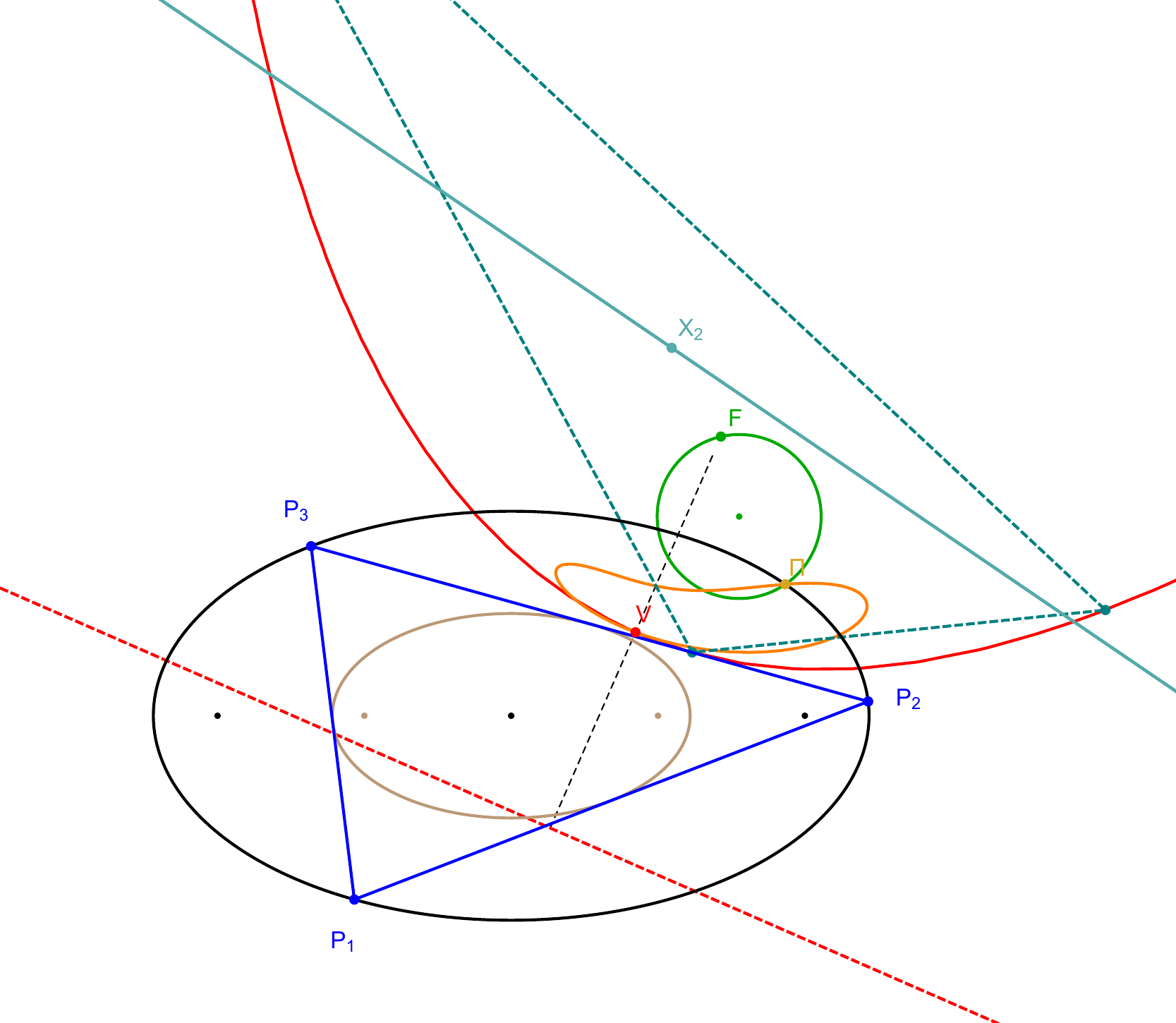}
    \caption{ Over the Poncelet family, the locus of the focus of $\P$ is a circle (green), while the vertex sweeps a non-conic curve (orange). Interestingly, the locus of the barycenter $X_2$ of the polar triangle (dashed teal) is a straight line (solid cyan).}
    \label{fig:ip-steiner-loci}
\end{figure}
%R incluido focus
\begin{observation}
Over the homothetic family, the locus of the foci of inparabolas whose Brianchon point is a fixed point $\Pi$ on the outer ellipse is a circle.
\end{observation}

Interestingly:

\begin{observation}
Over the homothetic family, the locus of the barycenter of polar triangles with respect to inparabolas with fixed Brianchon point $\Pi$ on the outer ellipse is a circle or a line.
\end{observation}

We suggest: 

%\textcolor{red}{Ron: Acredito que consegui checar que trata-se de uma parabola no caso da directrix}

\begin{challenge}
Describe the envelope of the directrix (and/or Simson line) over the homothetic family with a fixed $\Pi$ on the Steiner ellipse. 
\end{challenge}

\begin{challenge}
Describe the locus of the \underline{center} of the focus locus over all $\Pi$ on the Steiner ellipse.
\end{challenge} %permutado
\section{Circumparabolas as isogonal images}
\label{sec:cp-isog}

In this section we consider circumparabolas which are isogonal images of a fixed line tangent to the circumcircle. We call these ``isogonal CPs'' for short.

Specifically, below we mention properties of such parabolas over certain Poncelet triangle families inscribed in a circle $\C$ and circumscribing an inner ellipse $\E'$. Let $R$ denote the radius of the outer circle.

%Shown in \cref{fig:poncelet-circle-inscribed} are four such families studied, and defined as follows:

%\begin{itemize}
 %   \item Inellipse: $\E'$ is a concentric ellipse with semi-axes $a,b$. $(\C,\E')$ admit Poncelet triangles if $a+b=R$ \cite{garcia2020-family-ties}.
%    \item Bicentric (also known as Chapple's porism): $\E'$ is a circle of radius $r$. Let $d=|OI|=|X_1 X_3|$ denote the distance between fixed incenter and circumcenter. The so-called ``Chapple-Euler'' condition for Poncelet triangle admissibility\footnote{William Chapple published it in 1746 and Leonard Euler in 1765, see this \href{https://en.wikipedia.org/wiki/William_Chapple_(surveyor)}{wikipedia page}.} is that $d^2={R(R-2r)}$. For the historical background, see \cite[Sec.1.1]{centina15}.
%    \item MacBeath porism: $\E'$ is the so-called MacBeath inellipse \cite{mw}, whose foci are $X_3$ and $X_4$, and center is $X_5$, the center of the 9-point circle. As shown in \cite{odehnal2011-poristic,pamfilos2020,garcia2020-similarity-I}, this can be regarded as the family of excentral triangles\footnote{The excentral triangle has sides along the external bisectors of a triangle.} of the bicentric family.
 %   \item Brocard porism: $\E'$ is the Brocard inellipse \cite{mw}, whose foci are the two stationary Brocard points of the family \cite{bradley2007-brocard,reznik2020-similarityII}. These triangles conserve Brocard angle and are also known as the $N=3$ harmonic family \cite{casey1888}.
%\end{itemize}

\subsection{Focus Locus Hocus Pocus}

For a fixed triangle, the locus of the focus over all possible circumparabolas is a
%R %complicated
quintic $Q077$. The geometric construction of this curve  can be found in \cite{gibert2021-q077}.   \cref{remark:Q077} in \cref{app:explicit}.
%R
However, here triangles are Poncelet-varying. Referring to \cref{fig:cp-isog-bic-linear-focus}, the following phenomenon is proved in \cite{odehnal2022-parabolas}:

\begin{theorem}
Over the bicentric family, the locus of the focus of isogonal circumparabolas is a straight line.
\label{thm:focus}
\end{theorem}

\begin{observation}
Over the bicentric family, the locus of the barycenter $X_2'$ of the polar triangle with respect to isogonal circumparabolas is a straight line parallel to the locus of the focus.
\end{observation}

%\textcolor{red}{ronaldo: consegue estes dois?}

\begin{challenge}
Describe the envelope of the linear focus locus over all tangents to the circumcircle (pre-images of a given isogonal CP family).
\end{challenge}

Referring to \cref{fig:cp-isog-bic-linear-focus}, let $T$ be the intersection of the linear focus locus with the fixed tangent to the circumcircle.

\begin{challenge}
Describe the locus of $T$ over all tangents to the circumcircle.
\end{challenge}

\begin{figure}
    \centering
    \includegraphics[trim=40 100 120 0,clip,width=0.8\textwidth]{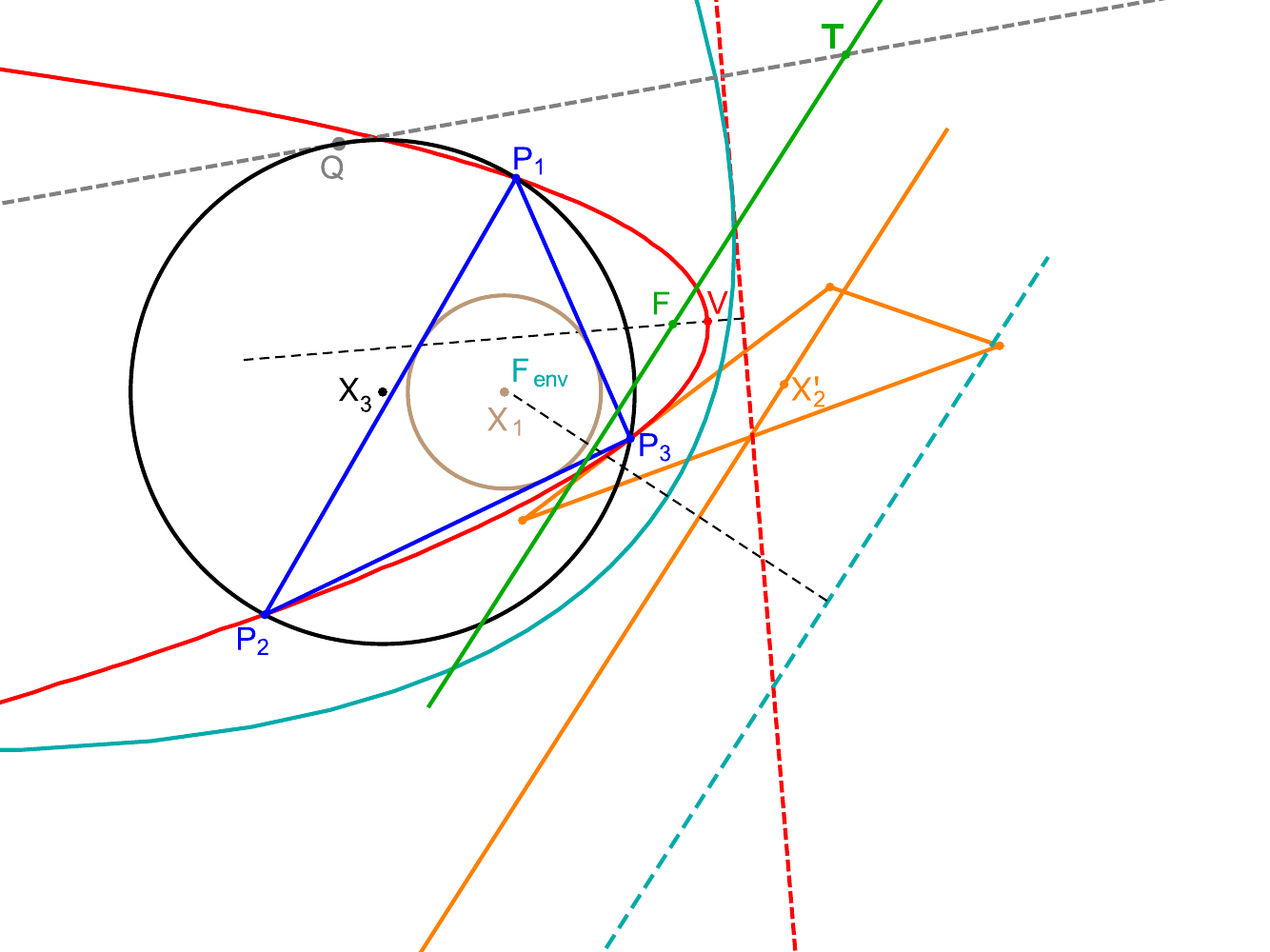}
    \caption{Over the bicentric family, (i) the locus of the focus of isogonal circumparabolas (red) is a straight line (green). Also a straight line is (ii) the locus of the barycenter $X_2'$ of the polar triangle (orange) with respect to the circumparabolas. Note that (i) and (ii) are parallel. (iii) the envelope of the directrix (dashed red) is a parabola (cyan) with focus $F_{env}$ at the incenter $X_1$ and directrix (dashed cyan) parallel to (i) and (ii). Point $T$ is the intersection of the linear focus locus with the fixed tangent to the circumcircle.}
    \label{fig:cp-isog-bic-linear-focus}
\end{figure}

\subsection{Directrix Envelope}

Referring to \cref{fig:cp-isog-bic-linear-focus}:

\begin{observation}
Over bicentric family, the envelope of the directrix of isogonal circumparabolas is a parabola with focus on the center $X_1$ of the inscribed circle. Furthermore, the directrix of this parabolic envelope is parallel to the loci of $F$ and $X_2'$. 
% \cite{garcia2022-private}.
\end{observation}

Referring to \cref{fig:cp-isog-inell}, over the inellipse family, neither the locus of the focus nor that of the vertex are low degree curves, however:

\begin{figure}
    \centering
    \includegraphics[trim=20 20 100 20 ,clip,width=.8\textwidth]{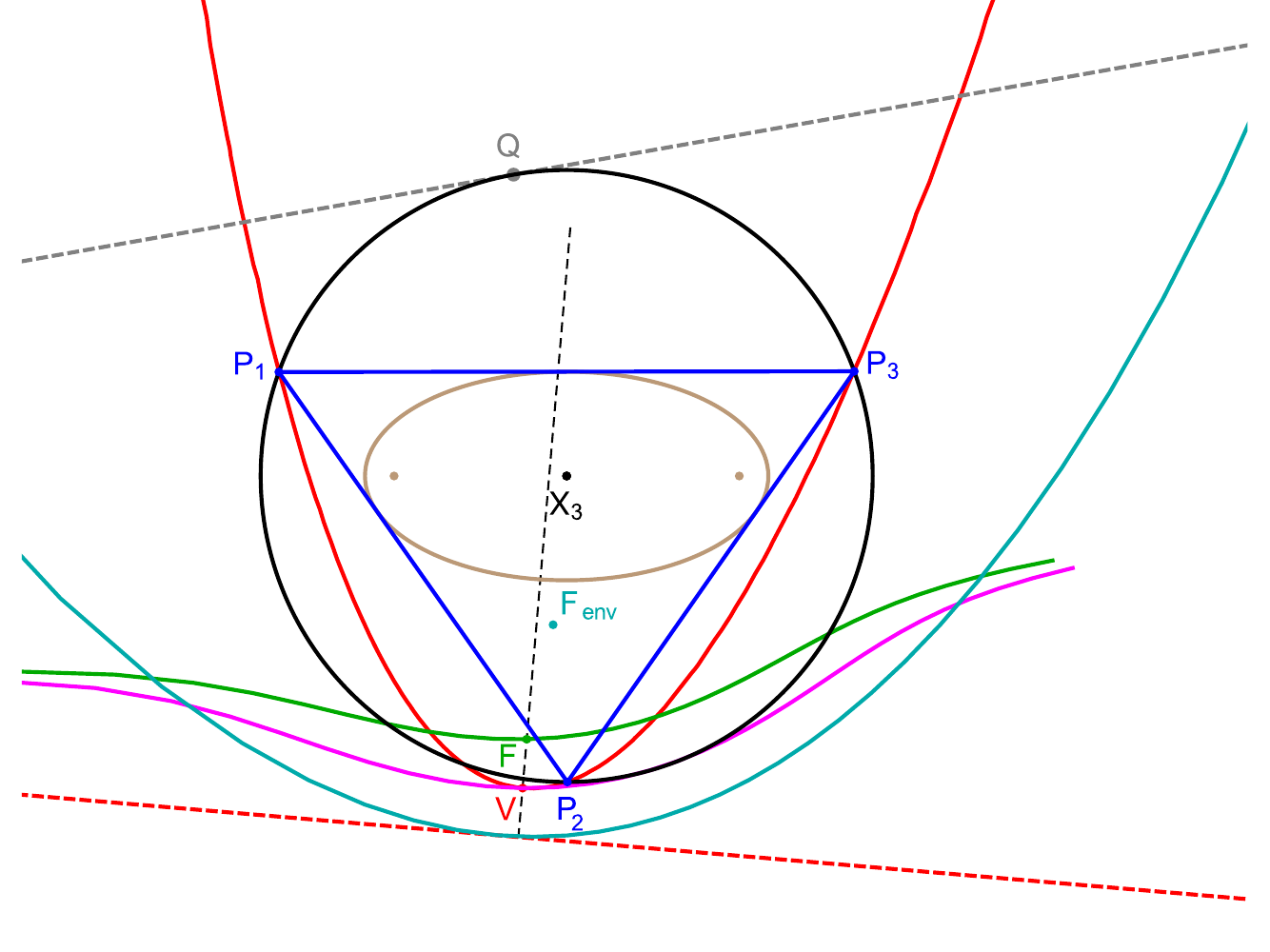}
    \caption{Over the ``inellipse'' family, the locus of the focus and vertex of isogonal circumparabolas (red) are curves of degree higher than 2 (red and magenta, respectively). The directrix's (dashed red) envelope (cyan) is  a parabola (cyan). Its focus is shown as $F_{env}$.}
    \label{fig:cp-isog-inell}
\end{figure}

\begin{observation}
Over the inellipse family, the envelope of the directrix of isogonal circumparabolas is a parabola.
\end{observation}

In fact:

\begin{observation}
Over both the MacBeath and Brocard families, the envelope of the directrix of isogonal circumparabolas are parabolas.
\end{observation}

In turn, this gives credence to:

\begin{conjecture}
Over any Poncelet triangle family inscribed in a circle, the envelope of directrix of isogonal circumparabolas is a parabola.
\label{conj:cp-isog-directrix}
\end{conjecture}

\begin{challenge}
For each circle-inscribed family (other than the bicentric one), describe the locus of the focus of the parabolic directrix envelope over all tangents to the circumcircle which are isogonal pre-images of circumparabolas.
\end{challenge}

\subsection{Perspectors}
%\label{sec:perspector}

Let $\C$ be a circumconic of a triangle $T$. The {\em polar triangle} $T'$ with respect to $\C$ is bounded by the tangents to $\C$ at the vertices of $T$ \cite[Polar triangle]{mw}. The {\em perspector} $\Pi$ of $\C$ is the point at which $T$ and $T'$ are in perspective \cite{mw}.  It is known that the perspectors of all circumparabolas to a fixed triangle sweep the Steiner inellipse \cite{pamfilos2021-circumparabolas}. Referring to \cref{fig:cp-isog-persp}:

\begin{observation}
Over both the bicentric and MacBeath families, the locus of the perspector of isogonal circumparabolas is an ellipse.
\end{observation}

\begin{figure}
    \centering
    \includegraphics[width=\textwidth]{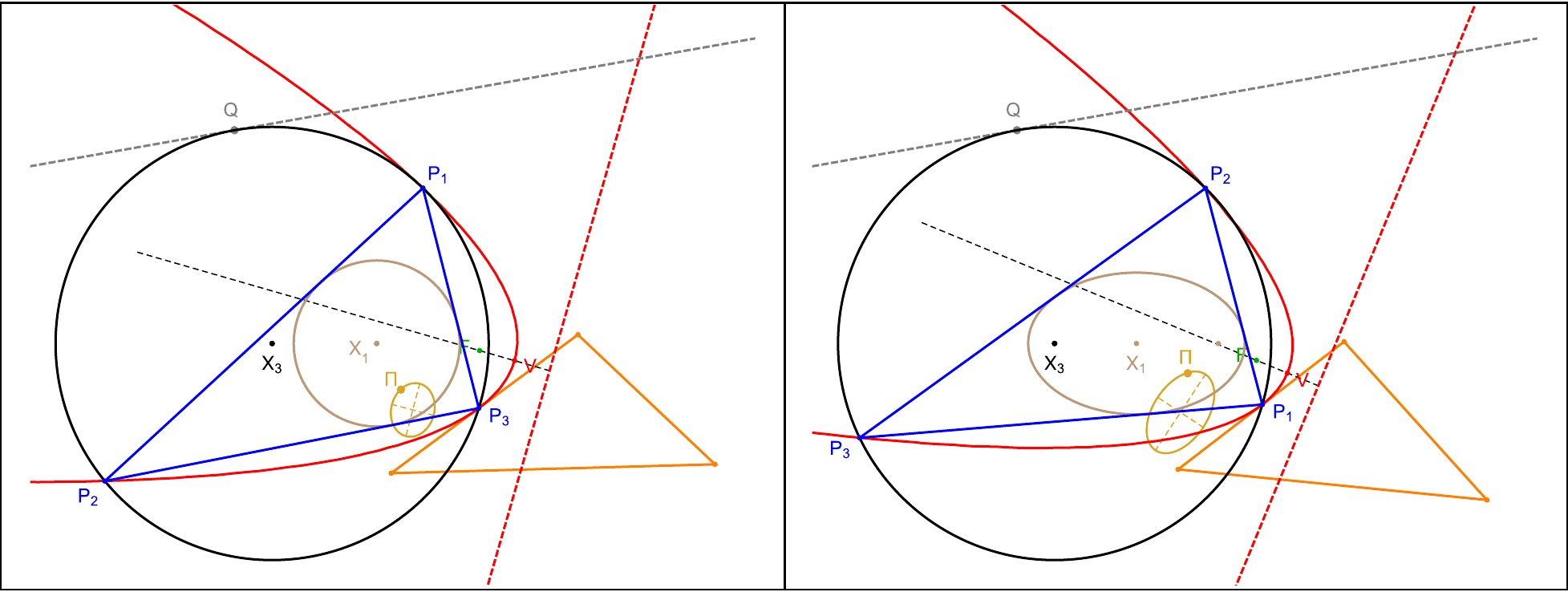}
    \caption{Over both the bicentric (left) and MacBeath (right) families, the locus of the perspector $\Pi$ of isogonal circumparabolas (red) is an ellipse (gold).}
    \label{fig:cp-isog-persp}
\end{figure}

Referring to \cref{fig:cp-isog-persp-broc}:

\begin{observation}
Over the Brocard family, the locus of the perspector of isogonal circumparabolas is a circle.
\end{observation}

\begin{figure}
    \centering
    \includegraphics[trim=0 0 50 0,clip,width=.8\textwidth]{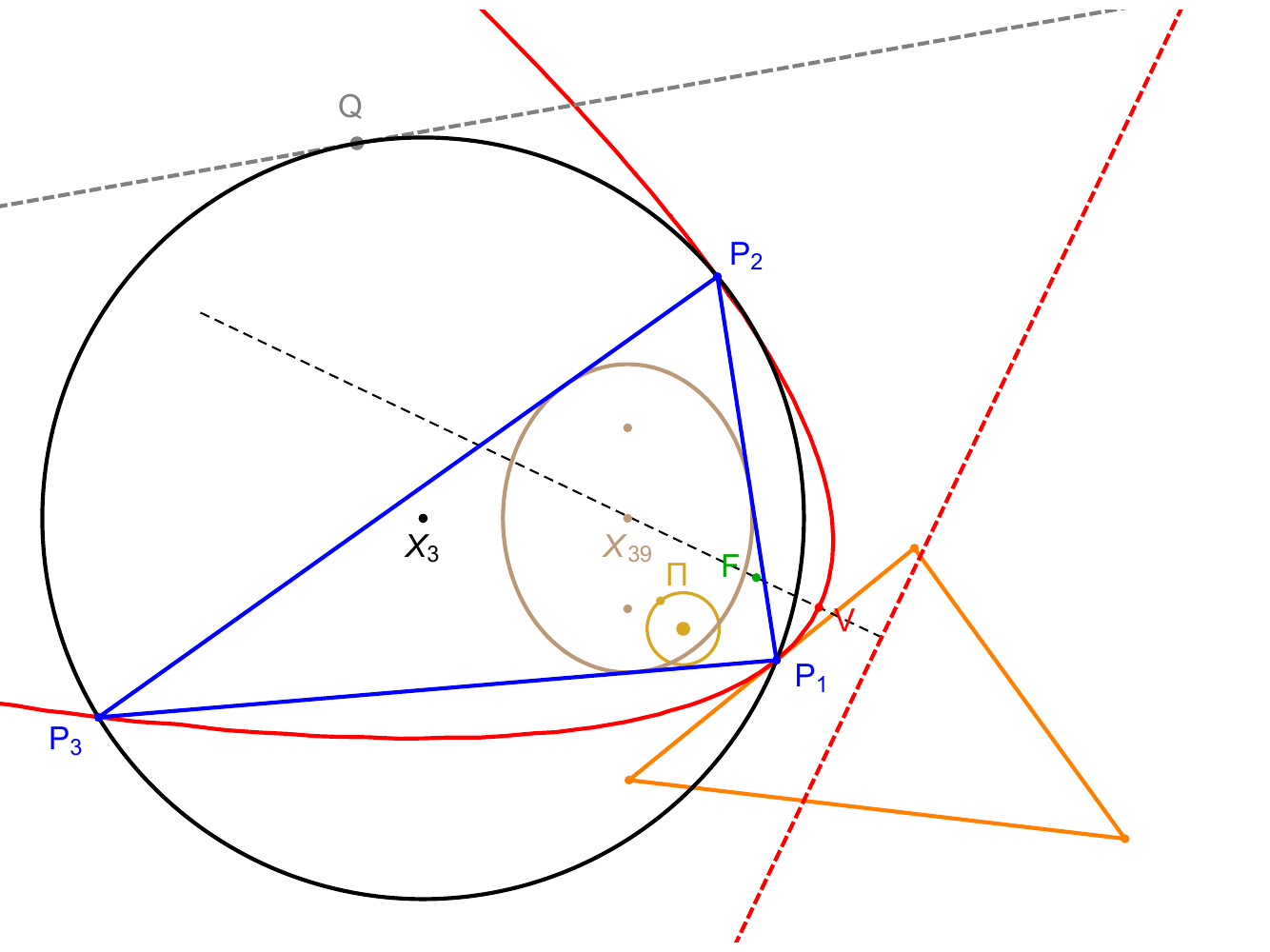}
    \caption{Over the Brocard family, the locus of the perspector $\Pi$ of isogonal circumparabolas (red) is a circle (gold).}
    \label{fig:cp-isog-persp-broc}
\end{figure}

Let $\Pi_Q$ denote the locus of the perspector of circumparabolas isogonal to a line tangent to the circumcircle at $Q$.

\begin{challenge}
Over all $Q$, describe the locus of the \underline{center} of $\Pi_Q$ generated over bicentric, MacBeath, and Brocard families.
\end{challenge} %permutado 2 secoes
\section{Circumparabolas as isotomic images}
\label{sec:cp-isot}

In this section we consider circumparabolas which are  isotomic images of a fixed line $\L$ tangent to the Steiner (circum)ellipse. We call these ``isotomic CPs'' for short. Below we enumerate some salient properties of such parabolas over a family of Poncelet triangles interscribed between two homothetic ellipses $\E$ and $\E'$, see \cref{fig:poncelet-tris}(right). Recall these are precisely the Steiner circum- and inellipse, respectively, centered at the barycenter $X_2$ of a general triangle. Since this family is the affine image of equilaterals interscribed between two concentric circles, it conserves area and maintains the affinely-invariant barycenter\footnote{The barycenter is the sole triangle center invariant under affine transformations.} $X_2$ stationary. Indeed, it conserves a myriad of other quantities such as sum of squared sidelengths, Brocard angle, etc. \cite{garcia2020-family-ties}.

Referring to \cref{fig:cp-isot-tangency}, the following has been kindly proved by B. Gibert \cite{gibert2021-private}. Let $\E$ and $\E'$ denote the outer and inner ellipse in the homothetic pair.

\begin{proposition}
Over the homothetic family, all isotomic circumparabolas are tangent to the reflection of $\L$ with respect to the common center $X_2$. Said circumparabolas envelop an ellipse which is axis-parallel with $\E,\E'$ and is tangent to $\E$ at $Q$ and to $\E'$ at $Q'$ where $Q$ is where $\L$ touches $\E$ and $Q'$ is the intersection of $Q X_2$ with $\E'$ farthest from $Q$.
\end{proposition}

%\begin{figure}
%    \centering
%    \includegraphics[width=\textwidth]{}
%    \caption{Over the homothetic family, all isotomic circumparabolas are tangent to the reflection of the tangent line $\L$ with respect to the common center $X_2$.}
%    \label{fig:cp-isot-tangency}
%\end{figure}

\begin{figure}
    \centering
    \includegraphics[trim=0 30 40 40,clip,width=\textwidth]{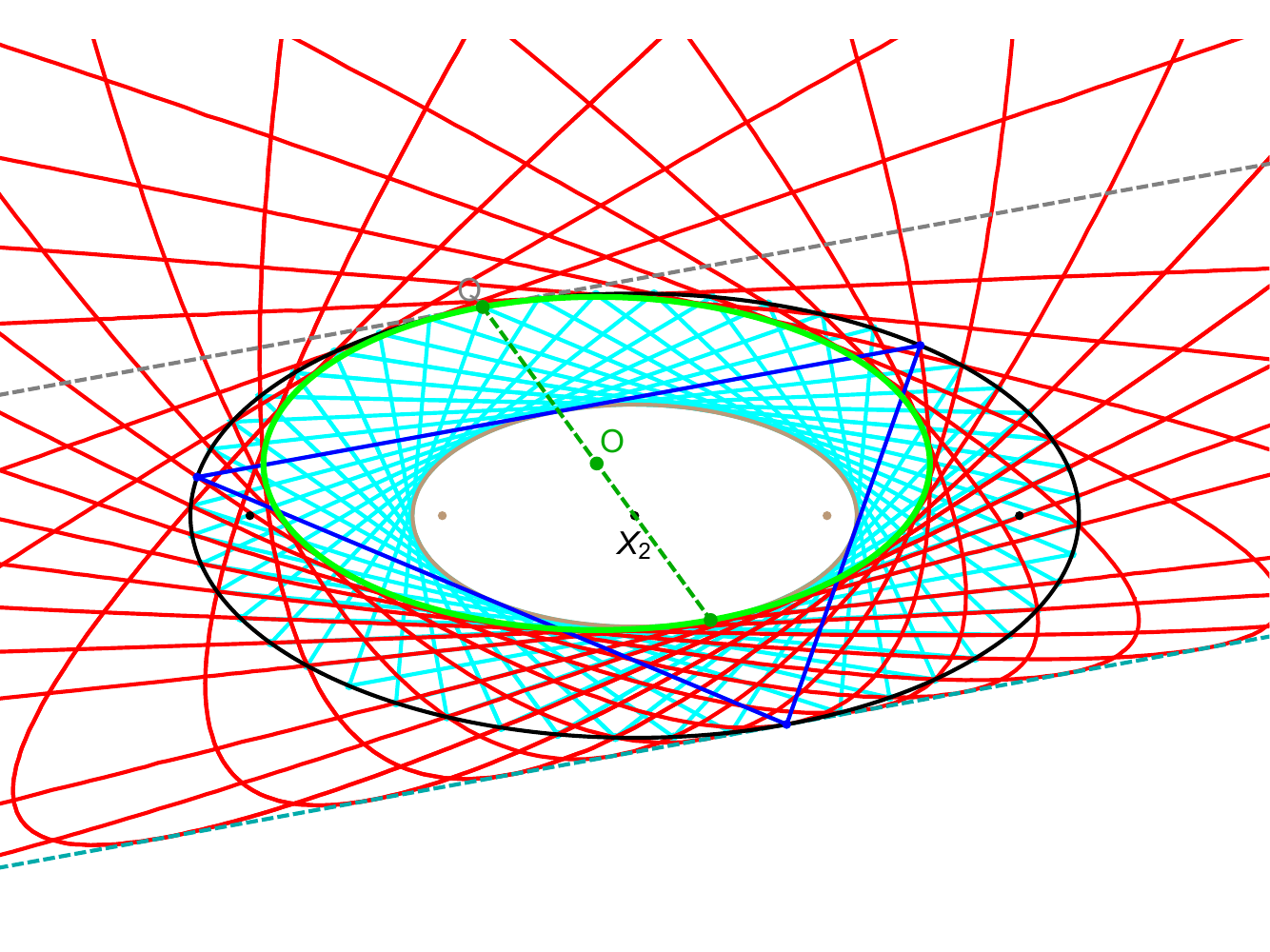}
    \caption{Over the homothetic family, all isotomic circumparabolas are tangent to the reflection of the tangent line $\L$ with respect to the common center $X_2$. This family of circumparabolas envelops an ellipse (green) axis-aligned with the homothetic pair and with center at the midpoint of Q and the distal intersection of line $QX_2$ with the caustic.}
    \label{fig:cp-isot-tangency}
\end{figure}
%R tirei texto da figura
Referring to \cref{fig:cp-isot-loci}, 
   consider a  triangle (blue)  interscribed between two concentric, homothetic ellipses $\E$ and $\E'$ (the Steiner ellipse and inellipse, respectively). Consider the circumparabola $\P$ (red) which is the isotomic image of a line $\L$ tangent to $\E$ at $Q$.
   
One notices that over said family, the locus of either the focus or vertex of isotomic circumparabolas are sinuous curves. However:

\begin{observation}
Over the homothetic family, the envelope of the directrix of isotomic circumparabolas is a parabola. Furthermore, %as observed by R. Garcia  \cite{garcia2022-private},
the directrix of said envelope is a line parallel to $\L$
\end{observation}

Furthermore:

\begin{observation}
Over the homothetic family, the locus of the barycenter of the polar triangle with respect to isotomic circumparabolas is a line parallel to $\L$.
\end{observation}

\begin{figure}
    \centering
    \includegraphics[trim=50 0 0 0,clip,width=0.8\textwidth]{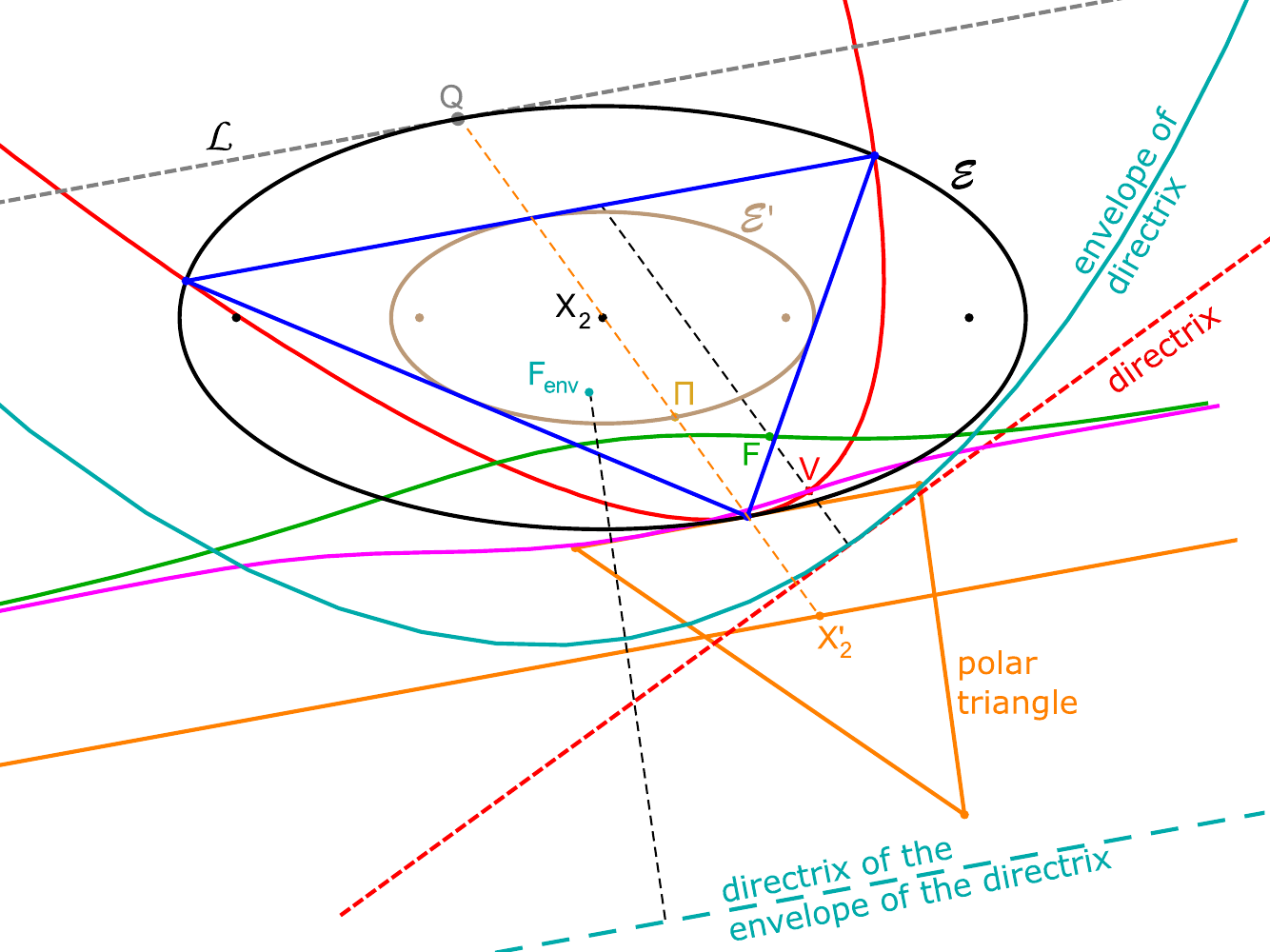}
    \caption{
    %A triangle (blue) is shown interscribed between two concentric, homothetic ellipses $\E$ and $\E'$ (the Steiner ellipse and inellipse, respectively). Consider the circumparabola $\P$ (red) which is the isotomic image of a line $\L$ tangent to $\E$ at $Q$. 
    Over the Poncelet family, the locus of the focus $F$ and vertex $V$ of the circumparabolas $\P$ are sinuous curves (green and magenta, respectively). Interestingly, the locus of the barycenter $X_2'$ of the polar triangle (orange) with respect to $\P$ is a straight line parallel to $\L$. The envelope of the directrix of $\P$ (dashed red) is a parabola (cyan, $F_{env}$ indicates its focus), whose directrix (dashed cyan) is parallel to $\L$. Remarkably, over the Poncelet family, the perspector $\Pi$ of $\P$ (necessarily on $\E'$ \cite{pamfilos2021-circumparabolas}) remains stationary and is collinear with the tangency point $Q$ of $\L$ and $X_2$.}
    \label{fig:cp-isot-loci}
\end{figure}

\begin{observation}
Over the homothetic family, the perspector $\Pi$ of isotomic circumparabolas is stationary on the Steiner inellipse and collinear with $X_2$ and the touch-point $Q$ of $\L$ on the outer Steiner ellipse.
\end{observation}

\begin{challenge}
Over all tangents to the Steiner ellipse which are pre-images of isotomic circumparabolas, describe the locus of the focus of the parabolic directrix envelope swept over the homothetic family.
\end{challenge}

\subsection{Locus of Generatrix Intersection}

Referring to \cref{fig:cp-tang-int-locus}, consider both the isogonal and isotomic pre-images of some circumparabola of a triangle $T$. As mentioned above, these are lines tangent to the circumcircle and Steiner ellipse, respectively. Let $Z$ denote their intersection, and $Q$ and $R$ denote the tangency points, respectively.

Recall the definition of the {\em Steiner point} $X_{99}$ of a triangle \cite{etc}: it is 4th intersection of the circumcircle with the Steiner ellipse (the first 3 are the vertices).

\begin{observation}
$Q$, $R$, and the Steiner Point $X_{99}$ are collinear.
\end{observation}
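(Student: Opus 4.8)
Since the claim concerns a single (arbitrary) triangle $T$ and one of its circumparabolas, no Poncelet dynamics enter; I would prove it in barycentrics with respect to $T$, side lengths $a,b,c$ (assume $T$ scalene, the isosceles cases following by continuity with $X_{99}$ then a vertex). \textbf{Step 1 (the parabola's infinite point).} A circumconic of $T$ is $p\,yz+q\,zx+r\,xy=0$ with perspector $[p:q:r]$, and it is a parabola exactly when $[p:q:r]$ lies on the Steiner inellipse $p^2+q^2+r^2-2pq-2qr-2rp=0$. Factoring this quadratic (Heron-style) shows every such perspector can be written $[u^2:v^2:w^2]$ with $u+v+w=0$; substituting into the conic and intersecting with the line at infinity $x+y+z=0$ shows the parabola touches it at $W=[u:v:w]$. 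So a circumparabola $\P$ is determined by this infinite point $W$.

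\textbf{Step 2 (locating $Q$ and $R$).} The isogonal image of a line $lx+my+nz=0$ is the circumconic with perspector $[a^2l:b^2m:c^2n]$, so the isogonal pre-image of $\P$ is $\ell_{\mathrm{isog}}:\ \tfrac{u^2}{a^2}x+\tfrac{v^2}{b^2}y+\tfrac{w^2}{c^2}z=0$; computing its pole with respect to the circumcircle $a^2yz+b^2zx+c^2xy=0$ and simplifying with $u+v+w=0$ gives the tangency point
\[ Q=\big[\,a^2/u:\ b^2/v:\ c^2/w\,\big], \]
i.e. $Q$ is the isogonal conjugate of $W$. (Equivalently: isogonal conjugation is a local isomorphism near $W\notin\{A,B,C\}$ sending $L_\infty$ to the circumcircle, hence preserves the tangency of $\P$ with $L_\infty$ at $W$.) The same computation with isotomic conjugation, which carries $L_\infty$ to the Steiner circumellipse $xy+yz+zx=0$, gives
\[ R=\big[\,1/u:\ 1/v:\ 1/w\,\big], \]
the isotomic conjugate of $W$.

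\textbf{Step 3 (collinearity with $X_{99}$).} Using $X_{99}=\big[\,1/(b^2-c^2):1/(c^2-a^2):1/(a^2-b^2)\,\big]$, collinearity of $Q,R,X_{99}$ is the vanishing of
\[ \det\begin{pmatrix}
a^2/u & b^2/v & c^2/w\\
1/u & 1/v & 1/w\\
1/(b^2-c^2) & 1/(c^2-a^2) & 1/(a^2-b^2)
\end{pmatrix}. \]
Multiplying the first two rows by $uvw$ and the last by $(b^2-c^2)(c^2-a^2)(a^2-b^2)$, then subtracting $a^2$ times the second row from the first, kills the first entry of row one; cofactor-expanding along that row and pulling out the common factor $uvw(a^2-b^2)(c^2-a^2)$ leaves
\[ (b^2-c^2)(v+w)\;-\;u\big[(a^2-b^2)+(c^2-a^2)\big]\;=\;(b^2-c^2)(u+v+w)\;=\;0, \]
so the determinant vanishes and $Q,R,X_{99}$ are collinear.

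\textbf{Main obstacle and remarks.} The only non-routine ingredient is the parametrization in Step 1: writing the perspector as $[u^2:v^2:w^2]$ is precisely what makes $Q$ and $R$ the isogonal and isotomic conjugates of a \emph{single} point $W$, after which the determinant collapses to the two linear identities $u+v+w=0$ and $(b^2-c^2)+(c^2-a^2)+(a^2-b^2)=0$ — the latter being exactly the fact that $X_{99}$ lies on both the circumcircle and the Steiner ellipse. A determinant-free finish is also available: $W\mapsto Q$ and $W\mapsto R$ are conic parametrizations of the circumcircle and the Steiner circumellipse that differ by the fixed projectivity $\mathrm{diag}(1/a^2,1/b^2,1/c^2)$, so $Q\mapsto R$ is a projectivity between those conics; it fixes $A,B,C$, and so does the perspectivity ``$\ell\mapsto$ second intersection of $\ell$ with each conic'' over lines $\ell$ through $X_{99}$, so the two maps coincide — which is the asserted collinearity.
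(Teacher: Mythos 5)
The paper offers no proof of this statement: it is one of the unproven observations supported only by numerical evidence (the surrounding text and the caption of the relevant figure merely record the ``curious fact''), so there is no argument of the paper's to compare yours against. Your proof is correct and supplies what the paper asks for. The key step checks out: writing the perspector of a circumparabola as $[u^2:v^2:w^2]$ with $u+v+w=0$ (the Heron-type factorization of the Steiner-inellipse condition) identifies the parabola's point at infinity as $W=[u:v:w]$, and since isogonal (resp.\ isotomic) conjugation carries the line at infinity to the circumcircle (resp.\ Steiner circumellipse), the tangency points of the two pre-image lines are $Q=g(W)=[a^2/u:b^2/v:c^2/w]$ and $R=t(W)=[1/u:1/v:1/w]$; I verified both by the pole computation and that your determinant with $X_{99}=[1/(b^2-c^2):1/(c^2-a^2):1/(a^2-b^2)]$ collapses to $(b^2-c^2)(u+v+w)=0$ exactly as you describe. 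Your determinant-free alternative is also sound: $Q\mapsto R$ is the restriction of the fixed projectivity $\mathrm{diag}(1/a^2,1/b^2,1/c^2)$ taking the circumcircle to the Steiner ellipse and fixing $A,B,C$, while the perspectivity from $X_{99}$ (a common point of the two conics) also fixes $A,B,C$, so the two coincide; this version is arguably the more illuminating one, since it explains why the pivot of the collinearity must be the fourth intersection point $X_{99}$. The only caveats are the degenerate ones you already flag ($uvw\neq 0$, i.e.\ the circumconic is a genuine parabola, and the isosceles case handled by continuity, where $X_{99}$ merges with a vertex).
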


The Kiepert parabola \cite{mw} is a special inconic whose directrix is the Euler line\footnote{Called the ``magic highway'' of a triangle in this \href{https://youtu.be/wVH4MS6v23U}{video}, the Euler line passes through the barycenter $X_2$, circumcenter $X_3$, orthocenter $X_4$, 9-pt center $X_5$ and a dozens of other triangles centers \cite{mw}.}. Its focus (necessarily on the circumcircle) is $X_{110}$ in \cite{etc}. Still referring to  \cref{fig:cp-tang-int-locus}, the following has been kindly proved by B. Gibert \cite{gibert2021-private}:

\begin{proposition}
Over the 1d family of circumparabolas to a fixed triangle, the locus of $Z$ is the isogonal image of the Kiepert parabola.
\end{proposition}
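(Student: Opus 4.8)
The plan is to carry out the whole computation in barycentric coordinates with respect to the fixed triangle $T=ABC$ (sidelengths $a,b,c$), assuming $T$ scalene; the isosceles cases then follow by a limiting argument. Recall that a circumconic of $T$ has the form $u\,yz+v\,zx+w\,xy=0$ with perspector $(u:v:w)$, and that --- by the fact used in \cref{sec:perspector} --- it is a parabola exactly when its perspector lies on the Steiner inellipse $\Sigma\colon u^2+v^2+w^2-2uv-2vw-2wu=0$; thus the one-parameter family of circumparabolas is parametrised faithfully by $(u:v:w)\in\Sigma$. For $\mathcal P=\mathcal P_{(u:v:w)}$ I would first write down the two pre-image lines: substituting $(x:y:z)\mapsto(yz:zx:xy)$ into the equation of $\mathcal P$ (and discarding the sidelines) gives its isotomic pre-image $\ell_2\colon ux+vy+wz=0$, tangent to the Steiner circumellipse at $R$, while $(x:y:z)\mapsto(a^2yz:b^2zx:c^2xy)$ gives its isogonal pre-image $\ell_1\colon \tfrac{u}{a^2}x+\tfrac{v}{b^2}y+\tfrac{w}{c^2}z=0$, tangent to the circumcircle at $Q$. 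A cross product of the coefficient vectors then yields
\[
  Z=\ell_1\cap\ell_2=\bigl(a^2(b^2-c^2)\,vw\;:\;b^2(c^2-a^2)\,wu\;:\;c^2(a^2-b^2)\,uv\bigr).
\]

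The key observation is that isogonal conjugation linearises this parametrisation: applying $\sigma\colon(x:y:z)\mapsto(a^2/x:b^2/y:c^2/z)$ to $Z$ and clearing common factors leaves
\[
  \sigma(Z)=\Bigl(\tfrac{u}{b^2-c^2}\;:\;\tfrac{v}{c^2-a^2}\;:\;\tfrac{w}{a^2-b^2}\Bigr),
\]
which is the image of the perspector $(u:v:w)$ under the fixed diagonal collineation $\phi=\operatorname{diag}\!\bigl(\tfrac1{b^2-c^2},\tfrac1{c^2-a^2},\tfrac1{a^2-b^2}\bigr)$. Hence, as $(u:v:w)$ runs over the conic $\Sigma$, the point $\sigma(Z)$ runs over the conic $\phi(\Sigma)$; substituting $u=(b^2-c^2)x$, $v=(c^2-a^2)y$, $w=(a^2-b^2)z$ into the equation of $\Sigma$ shows $\phi(\Sigma)$ to be
\[
  \mathcal Q\colon\quad \sum_{\mathrm{cyc}}(b^2-c^2)^2x^2\;-\;2\!\!\sum_{\mathrm{cyc}}(b^2-c^2)(c^2-a^2)\,xy=0 .
\]
Since $\sigma$ is an involution, the locus of $Z$ is then exactly $\sigma(\mathcal Q)$ --- equivalently, the isogonal conjugate of $Z$ traces $\mathcal Q$ itself.

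It then remains to recognise $\mathcal Q$ as the Kiepert parabola $\mathcal K$. The equation of $\mathcal Q$ has the shape $\sum p^2x^2-2\sum pq\,xy=0$ with $(p,q,r)=(b^2-c^2,\,c^2-a^2,\,a^2-b^2)$, so $\mathcal Q$ is the inscribed conic whose Brianchon point is $(1/p:1/q:1/r)$, namely the Steiner point $X_{99}$; as $X_{99}$ lies on the Steiner circumellipse, $\mathcal Q$ is a parabola. Matching $\mathcal Q$ against the tabulated data for $\mathcal K$ --- its Brianchon point is $X_{99}$, and its barycentric equation is exactly the one displayed (cf.\ \cite{mw}) --- finishes the identification; for a self-contained version one checks via the dual conic that $\mathcal Q$ is tangent to the three sidelines, to the line at infinity, and to the Lemoine axis $x/a^2+y/b^2+z/c^2=0$ (the last because $\tfrac{b^2-c^2}{b^2c^2}+\tfrac{c^2-a^2}{c^2a^2}+\tfrac{a^2-b^2}{a^2b^2}=0$), and since no three of these five lines concur (for scalene $T$) the conic tangent to all five is unique. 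One may also verify as a cross-check that the polar of the circumcircle point $X_{110}=\bigl(\tfrac{a^2}{b^2-c^2}:\tfrac{b^2}{c^2-a^2}:\tfrac{c^2}{a^2-b^2}\bigr)$ with respect to $\mathcal Q$ comes out to be the Euler line, matching $\mathcal K$'s directrix. Therefore the locus of $Z$ is $\sigma(\mathcal K)$, the isogonal image of the Kiepert parabola.

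I expect the one genuine obstacle to be this last identification of $\mathcal Q$ with $\mathcal K$: everything up to the formula for $\sigma(Z)$ is a single cross product and a diagonal substitution, whereas pinning $\mathcal Q$ down as the Kiepert parabola requires either running the symmetric polar/tangency computations carefully or appealing to tabulated properties of $\mathcal K$. A secondary caveat is that the collineation $\phi$ degenerates when $T$ is isosceles (one of $b^2-c^2,\,c^2-a^2,\,a^2-b^2$ vanishing), so those cases are best obtained as limits of the scalene result.
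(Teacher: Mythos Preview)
The paper does not actually supply its own proof of this proposition: it merely attributes the result to a private communication from B.~Gibert and moves on. Consequently there is no in-paper argument to compare your attempt against.

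That said, your barycentric computation is correct and self-contained. The essential steps --- parametrising circumparabolas by perspectors $(u:v:w)$ on the Steiner inellipse, reading off the isotomic and isogonal pre-image lines as $ux+vy+wz=0$ and $\tfrac{u}{a^2}x+\tfrac{v}{b^2}y+\tfrac{w}{c^2}z=0$, taking their cross product to get $Z$, and observing that isogonal conjugation sends $Z$ to the diagonal image $\bigl(\tfrac{u}{b^2-c^2}:\tfrac{v}{c^2-a^2}:\tfrac{w}{a^2-b^2}\bigr)$ of the perspector --- are all sound. The identification of the resulting conic $\mathcal Q$ with the Kiepert parabola via its Brianchon point $X_{99}$ (equivalently, via the dual conic $p\,mn+q\,nl+r\,lm=0$ with $p+q+r=0$, which is visibly tangent to the line at infinity and to the Lemoine axis) is also correct; this is the standard tabulated description of the Kiepert inparabola. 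Your caveat about the isosceles degeneration of the diagonal map is appropriate and the limiting argument is unproblematic.
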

%R (retirei o texto nao mostrado na figura)
   Also, it can be shown that over the family of  circumparabolas of $T$, the locus of $Z$ is a curve (green) which is the isogonal image of the Kiepert parabola \cite{mw} (pink), whose focus is $X_{110}$ and the directrix is the Euler line $X_2 X_3$.
   %(not shown).
\begin{figure}
    \centering
    \includegraphics[trim=0 50 50 70,clip,width=.8\textwidth]{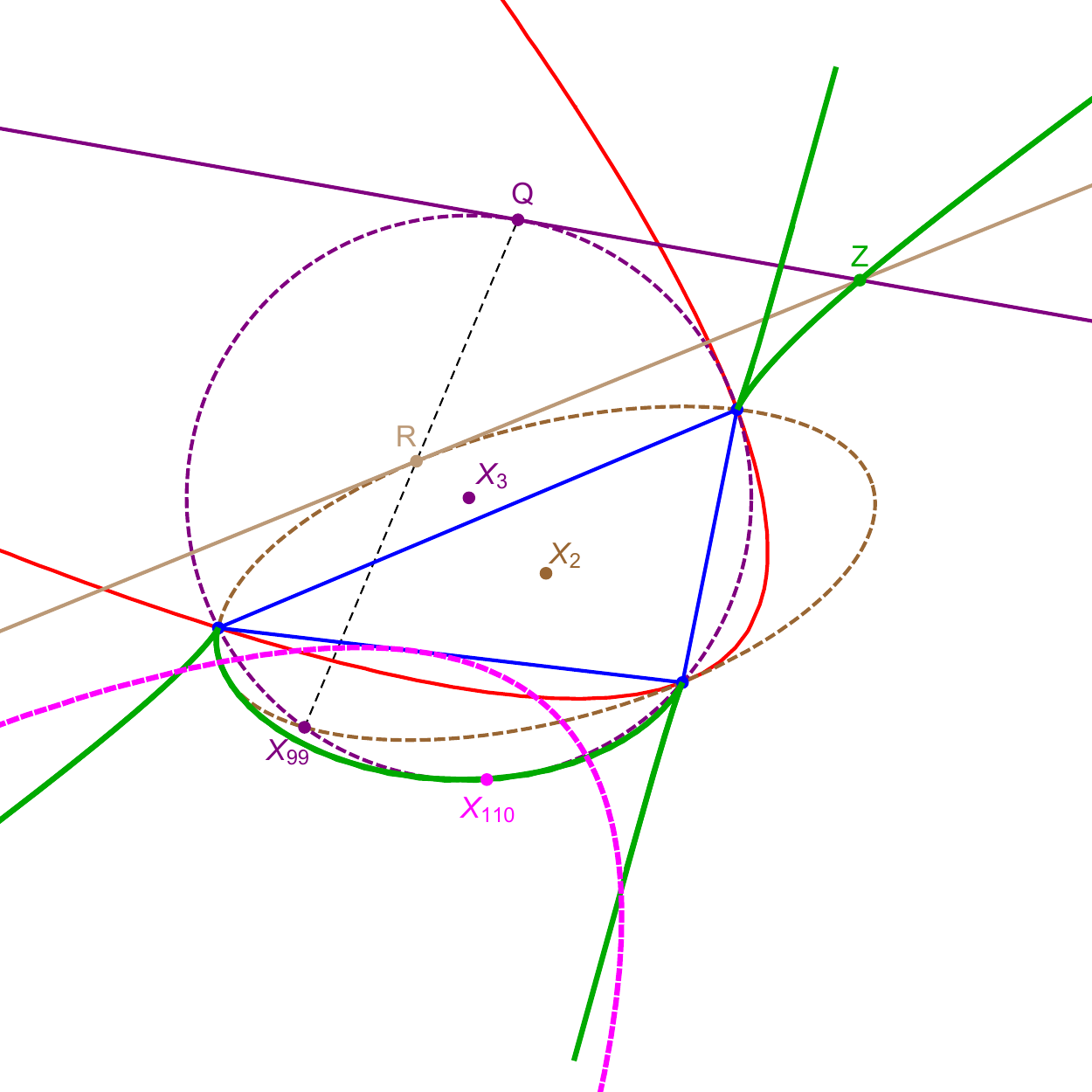}
    \caption{A particular circumparabola (red) is shown of a triangle $T$ (blue). 
    %R mexi no texto atendendo sugestao referee
    It is the isogonal (resp. isotomic) pre-image of a line tangent at $Q$ to the circumcircle \cite[Circumconic]{mw} (resp. at $R$ to the Steiner ellipse \cite[TC7(2)]{stothers-perspectors}). Let point $Z$ denote their intersection.
    %It can be shown that over the family of  circumparabolas of $T$, the locus of $Z$ is a curve (green) which is the isogonal image of the Kiepert parabola \cite{mw} (pink), whose focus is $X_{110}$ and the directrix is the Euler line $X_2 X_3$ (not shown).
    Also shown is the curious fact that $Q$, $R$ and the Steiner point $X_{99}$ are collinear.}
    \label{fig:cp-tang-int-locus}
\end{figure}

\section{Conclusion}
\label{sec:summary}

Narrated videos of some phenomena appear in a YouTube playlist \cite{playlist2021-parabolas}. We invite readers to both contribute proofs and/or work out the challenges proposed above.

\section*{Acknowledgements}
\noindent We would like to thank A. Akopyan, L. Gheorghe, B. Gibert, P. Moses, and A. Zaslavsky for their invaluable insights. We are also grateful to the excellent comments by the referee.

\appendix
\section{Families of Poncelet families}
\label{app:poncelet}

%Specifically, below we mention properties of such parabolas over certain Poncelet triangle families inscribed in a circle $\C$ and circumscribing an inner ellipse $\E'$. Let $R$ denote the radius of the outer circle.

Shown in \cref{fig:app-poncelet-circle-inscribed} are the four circle-inscribed  Poncelet families studied, and defined as follows:

\begin{itemize}
    \item Inellipse: $\E'$ is a concentric ellipse with semi-axes $a,b$. $(\C,\E')$ admit Poncelet triangles if $a+b=R$ \cite{garcia2020-family-ties}.
    \item Bicentric (also known as Chapple's porism): $\E'$ is a circle of radius $r$. Let $d=|OI|=|X_1 X_3|$ denote the distance between fixed incenter and circumcenter. The so-called ``Chapple-Euler'' condition for Poncelet triangle admissibility\footnote{William Chapple published it in 1746 and Leonard Euler in 1765, see this \href{https://en.wikipedia.org/wiki/William_Chapple_(surveyor)}{wikipedia page}.} is that $d^2={R(R-2r)}$. For the historical background, see \cite[Sec.1.1]{centina15}.
    \item MacBeath porism: $\E'$ is the so-called MacBeath inellipse \cite{mw}, whose foci are $X_3$ and $X_4$, and center is $X_5$, the center of the 9-point circle. As shown in \cite{odehnal2011-poristic,pamfilos2020,garcia2020-similarity-I}, this can be regarded as the family of excentral triangles\footnote{The excentral triangle has sides along the external bisectors of a triangle.} of the bicentric family.
    \item Brocard porism: $\E'$ is the Brocard inellipse \cite{mw}, whose foci are the two stationary Brocard points of the family \cite{bradley2007-brocard,reznik2020-similarityII}. These triangles conserve Brocard angle and are also known as the $N=3$ harmonic family \cite{casey1888}.
\end{itemize}
\begin{figure}[H]
    \centering
    \includegraphics[width=\textwidth]{}
    \caption{The four circle-inscribed Poncelet triangle families considered herein: (i) ``inellipse'' (caustic is a concentric ellipse), (ii) bicentric, i.e., Chapple's porism, i.e., triangles interscribed between two circles \cite[Sec.1.1]{centina15}; (iii) the ``MacBeath'' family's caustic has one focus on the circumcenter and another one on the orthocenter $X_4$. Its center is that of the 9-point circle $X_5$  \cite[MacBeath inconic]{mw}; (iv) the Brocard porism: the foci of the inconic are the two stationary Brocard points of the family \cite{bradley2007-brocard}.}
    \label{fig:app-poncelet-circle-inscribed}
\end{figure}

\section{Isogonal and Isotomic Conjugation}
\label{app:conjug}
The geometric construction of the isotomic and isogonal conjugate of a point $P$ in the plane of $\triangle ABC$ is illustrated in \cref{fig:conjug}. For more details, see \cite{akopyan12, akopyan2007-conics, garcia2021-impa,  sigur2005-conjug}.

\begin{figure}
    \centering
    \includegraphics[trim=40 140 40 60,clip,width=\textwidth,frame]{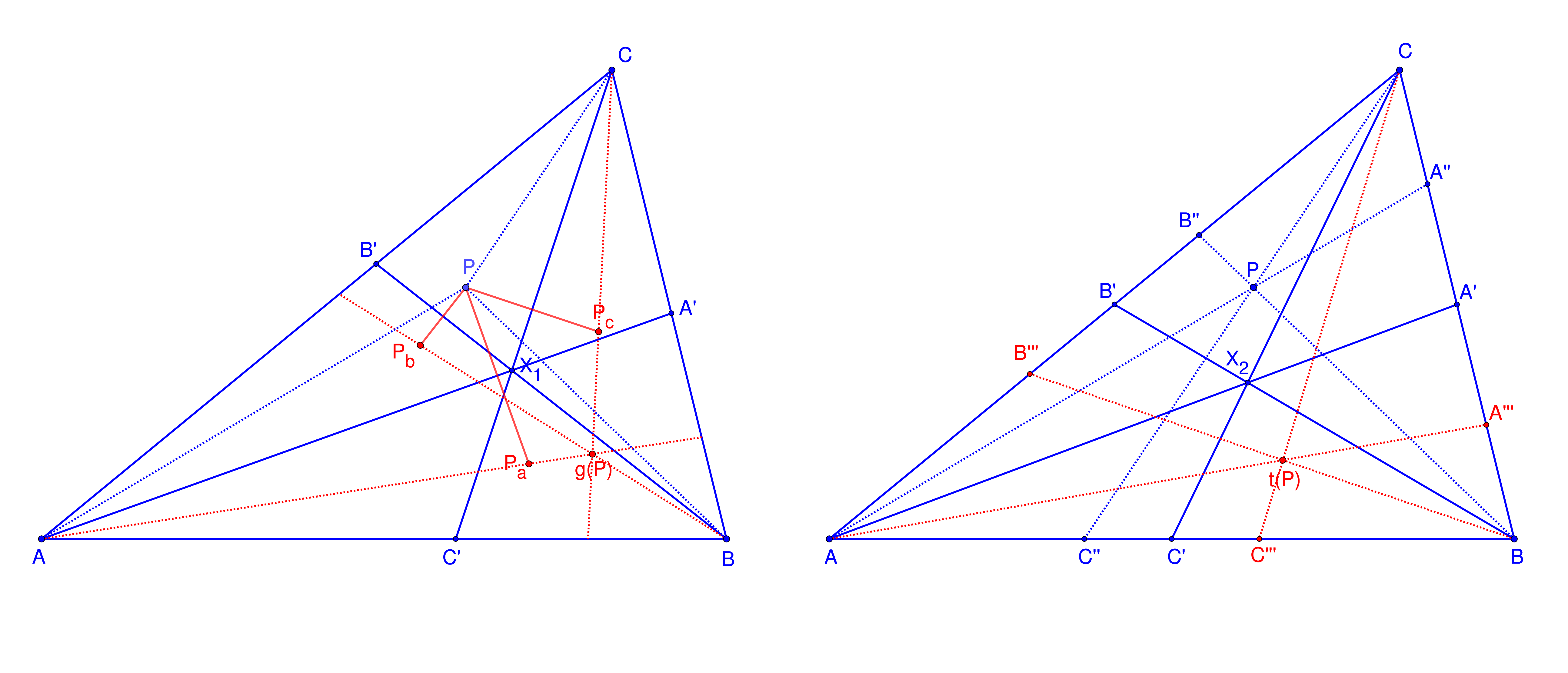}
    \caption{\textbf{Left:} Given a point $P$ in the plane of $\triangle ABC$, let $P_a,P_b,P_c$ be reflections of $P$ about the angle bisectors $AA'$, $BB'$, and $CC'$, respectively. The isogonal conjugate $g(P)$ of $P$ is common intersection of cevians $AP_a$, $AP_b$, and $CP_c$. \textbf{Right:} Let $A'$, $A''$, and $A'''$ denote the (i) midpoint of side $BC$, (ii) the intersection of cevian $AP$ with $BC$, and (iii) the reflection of $A''$ about $A'$ (obtain other points cyclically). 
    The isotomic conjugate $t(P)$ of $P$ is the common intersection of cevians $AA'''$, $BB'''$ and $CC'''$.}
    \label{fig:conjug}
\end{figure}

If the barycentric coordinates of $P$ be $[u,v,w]$, those of $t(P)$ will be $[1/u,1/v,1/w]$ \cite[Isotomic conjugate]{mw}. Likewise, is the trilinear coordinates of $P$ be $[r,s,t]$, those of $g(P)$ will be $[1/r,1/s,1/t]$ \cite[Isogonal conjugate]{mw}.

Recall trilinear and barycentric coordinates are homogeneous triples, i.e., all multiples correspond to the same projective point. Recall that if a point has trilinear coordinates $[r,s,t]$, its barycentric coordinates are $[a r,b s, c t]$, where $a,b,c$ are the sidelengths, i.e., one system is easily converted into the other. For example, the trilinear coordinates of the incenter $X_1$ (resp. barycenter $X_2$) are $[1,1,1]$ (resp. $[b c,c a,a b]$. Its barycentrics coordinates are therefore $[a,b,c]$ (resp. $[1,1,1]$) \cite{etc}.

\section{Explicit Derivations}
\label{app:explicit}
\subsection{Circumparabolas}

The focus $F=(x_f,y_f)$ and directrix $\D$ of a parabola parametrized by:
\[x= a_0+a_1t+a_2 t^2, \;\;y=b_0+b_1 t+b_2t^2\]
are given by:
\begin{align*}
x_f=& \frac{ 4 a_0 (a_2^2 + b_2^2) - a_1 (a_1 a_2 + b_1 b_2) - b_1 (a_1 b_2 - a_2 b_1)}{ 4(a_2^2 + b_2^2) }\\
y_f=&\frac{ 4 b_0 (a_2^2 + b_2^2) + a_1 (a_1 b_2 - a_2 b_1) - b_1 (a_1 a_2 + b_1 b_2) }{4(a_2^2 + b_2^2) } \\
   \D: & \; a_2 x + b_2 y - (a_0 a_2 + b_0 b_2) + \frac{1}{4}(a_1^2 + b_1^2)=0
\end{align*} 

\begin{remark}
Consider a triangle $\T:~P_i=(\cos\alpha_i,\sin\alpha_i), i=1,2,3$, inscribed in the unit circle $\C:~x^2+y^2=1$.
Let $\ell_\theta$ be the line through $P=(\cos\theta, \sin\theta)$ and tangent  to $\C$. The isogonal image of $\ell_\theta$ with respect to $\T$ is the circumparabola given by:
\[x= a_0+a_1t+a_2 t^2, \;\;y=b_0+b_1 t+b_2t^2\]
where:
\begin{align*}
    a_0&= \cos(  \alpha_1 +  \alpha_2 +  \alpha_3-2  \theta)\\
    a_1&=sin( \theta)  - sin( \alpha_2+ \alpha_3 -  \theta ) - sin( \alpha_1+   \alpha_2-  \theta  ) - sin( \alpha_1+\alpha_3 -  \theta)\\
    &+ 2 sin(  \alpha_1 +  \alpha_2 +  \alpha_3-2  \theta  )\\
    a_2&=  \cos( \theta) -  \cos( \alpha_1) -  \cos( \alpha_2) -  \cos( \alpha_3) +  \cos(   \alpha_2+ \alpha_3 -  \theta ) +  \cos(   \alpha_1+\alpha_3 -  \theta ) \\
    &+  \cos(   \alpha_1+ \alpha_2 -  \theta ) - \cos(  \alpha_1 +  \alpha_2 +  \alpha_3-2  \theta ) \\
     b_0&= \sin(  \alpha_1 +  \alpha_2 +  \alpha_3-2  \theta)\\
     b_1&= - \cos( \theta)+   \cos( \alpha_1+\alpha_2 -  \theta) +  \cos( \alpha_1+\alpha_3 -  \theta) +  \cos(\alpha_2+ \alpha_3 -  \theta)\\
     & -   2\cos(  \alpha_1 +  \alpha_2 +  \alpha_3-2  \theta )\\
     b_2&= sin( \theta)-sin( \alpha_1)  - sin( \alpha_2) - sin( \alpha_3) + sin(\alpha_1+ \alpha_3 -  \theta) + sin(\alpha_1+ \alpha_2 -  \theta) \\
     &+ sin( \alpha_2+   \alpha_3-  \theta)  
      - sin( \alpha_1 +  \alpha_2 +  \alpha_3-2  \theta)
\end{align*}
\label{prop:circumparabola}
\end{remark}

\begin{remark}
The envelope of the directrix over the circumparabolas given in \cref{prop:circumparabola} is a rational parametric curve $(x_e(t),y_e(t))$, where $\cos(\theta)=\frac{1-t^2}{1+t^2}, \; \sin(\theta)=\frac{2t}{1+t^2}$. In the implicit form it is given by  a sextic polynomial  equation.
\end{remark}

\begin{remark}
The locus of the focus of circumparabolas over all $\ell_\theta$ in \cref{prop:circumparabola} is a rational parametric curve $(x_f(t),y_f(t))$  where $\cos(\theta)=\frac{1-t^2}{1+t^2}, \; \sin(\theta)=\frac{2t}{1+t^2}$. In the implicit form it is given by  a quintic polynomial  equation.
\label{remark:Q077}
\end{remark}

Note: the above is consistent with Gibert's Q077 quintic (in barycentric coordinates) for the same locus \cite{gibert2021-q077}.
\subsection{Inparabolas}

Consider a triangle $\T:~P_i=(\cos\alpha_i,\sin\alpha_i), i=1,2,3$, inscribed in the unit circle $x^2+y^2=1$ and 
the point $F=(\cos\theta,\sin\theta)$.

\begin{remark}
The directrix of the inparabola to $\T$ with focus at $F$ is given by:
\[m x+ n y+l=0\]
where:
\begin{align*}
   m&=\cos( \theta)- \cos(\alpha_1)  -  \cos( \alpha_2) -  \cos( \alpha_3) +  \cos( \alpha_2 +  \alpha_3- \theta ) \\
   &+  \cos(\alpha_1  + \alpha_3- \theta) +  \cos(  \alpha_1 +  \alpha_2- \theta)-  \cos(  \alpha_1+ \alpha_2 +  \alpha_3 - 2  \theta )\\
  n&=   \sin( \theta) -    \sin( \alpha_1) -  \sin( \alpha_2) - \sin( \alpha_3) + \sin(  \alpha_2 +  \alpha_3- \theta )\\
   &+ \sin(  \alpha_1 +  \alpha_3- \theta) +  \sin( \alpha_1 +  \alpha_2-\theta) -  \sin(\alpha_1+ \alpha_2 +  \alpha_3 - 2  \theta)\\
  l&=3(1 -    \cos(  \alpha_1- \theta)-    \cos(  \alpha_2- \theta ) -    \cos( \alpha_3- \theta) )\\
   &+ 2  \cos( \alpha_1 -  \alpha_2)+ 2  \cos( \alpha_2 -  \alpha_3)  + 2  \cos( \alpha_1 -  \alpha_3) \\
   &-  \cos( \alpha_1 + \alpha_2 +  \alpha_3-  \theta ) -  \cos( \alpha_1-  \alpha_2 -  \alpha_3 +   \theta ) -  \cos( \alpha_1 +  \alpha_2 -  \alpha_3-  \theta ) \\
   &+  \cos( \alpha_1 +  \alpha_2-2  \theta )  
    +  \cos( \alpha_2+   \alpha_3- 2  \theta) +  \cos(  \alpha_1 +  \alpha_3-2  \theta)
\end{align*}
\label{prop:directrix}
\end{remark}
\begin{proof} The inparabola has focus on the circumrcircle and is tangent to the Simson line at the vertex \cite[Inparabola]{mw}.
By reflecting the focus $F$ about the Simson line, obtain that the directrix, known to be parallel to the Simson line, passes through point $F_1=(p/2,q/2)$ where:
\begin{align*}
   p&=  \cos(\theta)+\cos(\alpha_1)  + \cos(\alpha_2) + \cos(\alpha_3)  - \cos(\alpha_2 + \alpha_3-\theta )  - \cos(\alpha_1 + \alpha_3-\theta ) \\
   &- \cos( \alpha_1 + \alpha_2-\theta) 
    + \cos(\alpha_1+\alpha_2 + \alpha_3 - 2\theta   )\\
   q&=   \sin(\theta) +\sin(\alpha_1) + \sin(\alpha_2)+ \sin(\alpha_3) -\sin( \alpha_2 + \alpha_3-\theta ) - \sin(\alpha_1 +\alpha_3 -\theta)  \\
   &- \sin( \alpha_1 + \alpha_2-\theta) + \sin(  \alpha_1+\alpha_2 + \alpha_3 - 2 \theta )
\end{align*}
Therefore, the directrix is defined by
the equation
$\langle (x,y)-F_1, F-F_1\rangle=0$. Manipulation with a CAS yields the claim.
\end{proof}

\begin{remark}
Given a triangle $\T$, the envelope of the directrix of inparabolas with foci are points on the circumcircle is the orthocenter $X_4$ of $\T$, is given by: 
\[X_4: (\cos(\alpha_1)+\cos(\alpha_2)+\cos(\alpha_3),\sin(\alpha_1)+\sin(\alpha_2)+\sin(\alpha_3))\]
\end{remark}

\begin{proof}
The envelope of a family of lines $a(\theta)x+b(\theta)y+c(\theta)=0$ is given
by
\[ E(\theta)=\left(\frac{bc'-cb'}{ab'-a'b}, \frac{a'c-c'a}{ab'-ba}\right)\]
The result follows using CAS in the family of directrix lines given in \cref{prop:directrix}.
\end{proof}

%For completeness and convenience to the reader the following results will be stated. See also \cite{butchart-1939}.

%\begin{remark}
%The envelope of the family of Simson lines is the deltoid $E(\theta)=(x_e,y_e) $, where:  
%\begin{align*}
% x_e&=   
%\cos(\alpha_1 + \alpha_2 + \alpha_3-2\theta)/2 + \cos(\theta) + \cos(\alpha_1)/2 + \cos(\alpha_2)/2 + \cos(\alpha_3)/2\\
%y_e&=\sin(\alpha_1 + \alpha_2 + \alpha_3-2\theta)/2 + \sin(\theta) + \sin(\alpha_1)/2 + \sin(\alpha_2)/2 + \sin(\alpha_3)/2
%\end{align*} 
%\end{remark}

%\begin{remark}
%The area of the region  bounded by the deltoid $E(\theta)$ is equal to $\pi$, i.e.,  it is independent of the triangle $\{P_i\}$.
%\end{remark}

\begin{remark}
The parametric equation of the parabola with focus $F=(x_f,y_f)$ and directrix $mx+ny+l=0$
is given by $P(t)=(x(t),y(t))$, where:
\begin{align*}
    x(t)&=\frac{(m^2 + n^2) m t^2}{ 2( m x_f +   n y_f +   l)} - n t + \frac{ m^2 x_f - ( n y_f + l) m + 2 n^2 x_f }{ 2( m^2 +   n^2)}\\
    y(t)&=\frac{(m^2 + n^2) n t^2}{2(m x_f + n y_f + l)} + m t + \frac{ n^2 y_f - ( m x_f + l) n + 2 m^2 y_f }{2( m^2 +  n^2)}
\end{align*}
The point $P(0)$ is the vertex of the parabola.
\end{remark}
%\section{Symbol Table}
%\input{220_app_symbol_table}
%\label{app:symbols}

\bibliographystyle{maa}
\bibliography{999_refs,999_refs_rgk,999_refs_rgk_media}

\end{document}